\numberwithin{equation}{section}
\DeclareSymbolFont{SY}{U}{psy}{m}{n}
\DeclareMathSymbol{\emptyset}{\mathord}{SY}{'306}
\newcommand{\overbar}[1]{\mkern 1.5mu\overline{\mkern-1.5mu#1\mkern-1.5mu}\mkern 1.5mu}
\theoremstyle{plain}
\newtheorem{thm}{Theorem}[section]
\newtheorem{cor}[thm]{Corollary}
\newtheorem{lem}[thm]{Lemma}
\newtheorem{prop}[thm]{Proposition}
\theoremstyle{definition}
\newtheorem{defn}[thm]{Definition}
\newtheorem{rem}[thm]{Remark}
\DeclareMathOperator{\Ad}{Ad}
\DeclareMathOperator{\ad}{ad}
\DeclareMathOperator{\Hol}{Hol}
\DeclareMathOperator{\Hom}{Hom}
\DeclareMathOperator{\Aut}{Aut}
\newcommand{\D}{ \ensuremath{{D}}}
\newcounter{defcounter}
\title[Homogeneous bundles and operators in the Cowen-Douglas class]
{Homogeneous Hermitian holomorphic vector bundles and \\the Cowen-Douglas class over bounded symmetric domains 
}
\author[A. Kor\'{a}nyi]{Adam Kor\'{a}nyi} 
\author[G. Misra]{Gadadhar Misra}
\address[A. Kor\'{a}nyi]{Lehman College\\
Bronx, NY 10468}
\address[G. Misra]{Indian Institutte of Science\\Bangalore 560012}
\email[A. Kor\'{a}nyi]{adam.koranyi@lehman.cuny.edu}
\email[G. Misra]{gm@math.iisc.ernet.in}
\keywords{homogeneous Hermitian holomorphic vector bundles, holomorphic induction, Cowen-Douglas class} 
\subjclass[2010]{Primary 47A13, 20C25; Secondary 32M15, 53C07}
\thanks{Both the authors were supported, in part, by a DST - NSF S\&T Cooperation Program and the J C Bose National Fellowship of the Department 
of Science and Technology. The second author also gratefully acknowledges the support from the University Grants Commission Centre for Advanced Studies.}
\begin{document}

\begin{abstract}
It is known that all the vector bundles of the title can be obtained by holomorphic induction from representations of a certain parabolic Lie algebra on finite dimensional inner product spaces. The representations, and the induced bundles, have composition series with irreducible factors.  Our first   main result is the construction of  an explicit differential operator intertwining the  bundle with the direct sum of its factors. Next, we study Hilbert spaces of sections of  these bundles. We use this to get,  in particular, a full description and a similarity theorem for homogeneous  $n$-tuples of operators in the Cowen-Douglas class of the Euclidean unit ball in $\mathbb C^n$.

\end{abstract}
\maketitle
\setcounter{section}{-1}
 \section{Introduction}
A domain in $\mathbb C^n$ is said to be \emph{symmetric} if for each of its points $z$ it has an involutive holomorphic automorphism $s_z$having $z$ as an isolated fixed point.  We consider bounded symmetric domains $\mathcal D$ in what is known as their standard Harish-Chandra realization. The \emph{irreducible} ones among these (i.e. those that are not product domains ) are in one to one correspondence with simple real Lie algebras $\mathfrak g$ such that in the Cartan decomposition $\mathfrak g= \mathfrak k + \mathfrak p$ the subalgebra $\mathfrak k$ has non-zero center. The simply connected group $\tilde{G}$ with Lie algebra $\mathfrak g$ acts on $\mathcal D$ by holomorphic automorphisms; one has $\mathcal D\cong \tilde{G}/\tilde{K}$ with $\tilde{K}$ corresponding to $\mathfrak k$.  The complexification $\mathfrak g^\mathbb C$ of $\mathfrak g$ has a vector space direct sum decomposition $\mathfrak g^\mathbb C = \mathfrak p^+ +\mathfrak k^\mathbb C + \mathfrak p^-$.  In the realization $\mathcal D$ appears as a balanced convex domain in $\mathfrak p^+\cong \mathbb C^n$. 
 
By a \emph{homogeneous} holomorphic vector bundle (\emph{hhvb}) we mean the ones homogeneous under $\tilde{G}$.  These bundles arise by the process of holomorphic induction from finite dimensional representations $(\varrho, V)$ of $\mathfrak k^\mathbb C + \mathfrak p^-$, which is a subalgebra of $\mathfrak k^\mathbb C$. The Hermitian hhvb-s (meaning homogeneous as Hermitian bundles) come from $(\varrho,V)$ such that $V$ has a $\tilde{K}$ invariant inner product. For many questions, only the existence of a Hermitian structure matters, so we will also talk about \emph{Hermitizable} hhvb-s, which can then have many Hermitian structures. 

By general principles, every holomorphic vector bundle over a domain is trivial. So, a hhvb is the same thing as a multiplier representation of $\tilde{G}$ on the space of $\Hol(\mathcal D, V)$ of $V$- valued holomorphic functions. We will keep using a certain natural trivialization which we call the \emph{canonical trivialization} (cf. \eqref{1.13-}, \eqref{1.13}).

Hermitian hhvb-s jumped into prominence in 1956, when Harish-Chandra used Hilbert spaces of sections of such bundles to construct the holomorphic discrete series of unitary representations of $\tilde{G}$. In the next three decades, the full scope of this method of constructing unitary representations was explored. All this work was about hhvb-s that are induced by irreducible representations $\varrho$ of $\mathfrak k^\mathbb C + \mathfrak p^-$ (which implies $\varrho$ is $0$ on $\mathfrak p^-$). In fact, it was clear that more general $\varrho$ can only give direct sums of representations already constructed. 

Still, the highly non-trivial more general representations of $\mathfrak k^\mathbb C + \mathfrak p^-$ and the corresponding hhvb-s exist and deserve being studied both for their own sake and for the sake of applications such as theory of Cowen-Douglas operators.  The general $(\varrho, V)$ has a descending chain of  invariant subspaces and the induced hhvb has a chain of homogeneous sub-bundles forming a composition series whose quotients are irreducible representations of $\mathfrak k^\mathbb C + \mathfrak p^-$, resp. hhvb-s induced by these. 

The first half (sections 1 and 2) of this article is devoted to this study.  The main result is Theorem \ref{thm 2.5n} which (except at some singular values of a parameter) gives an explicit differential operator $\Gamma$  (which first appeared, in the one variable case in \cite{KM0}) that intertwines in a $\tilde{G}$- equivariant way a general Hermitian hhvb with the direct sum of the factor bundles of its composition series. The prinicpal elements of the proof are Lemma \ref{lem 1.6}, which is essentially an expression for the derivative of the Jacobian matrix of a holomorphic automorphism and Theorem \ref{2.4} which is a less complicated special case of the final Theorem \ref{thm 2.5n}. 

In Section 3, we first discuss whether $\tilde{G}$- invariant Hilbert spaces, dense in $\Hol(\mathcal D, V)$, exist for our bundles. We show that this question can be completely reduced to the case of bundles induced by irreducible $(\varrho, V)$, where the answer is well-known. Then we investigate whether the gradient type operators making up $\Gamma$ in Section 2 are bounded as operators from one Hilbert space to another. We can reduce this question to the case of line bundles, but this leads to a completely satisfactory only when $\mathcal D$ is the Euclidean ball in $\mathbb C^n$. 

In Section 4, we consider homogeneous Cowen-Douglas operator $n$- tuples associated to bounded symmetric domains $\mathcal D$. For the unit disc in $\mathbb C$ there is a complete description of these in \cite{KM}. Here we extend the two main results of \cite{KM} to the case of the unit ball in $\mathbb C^n$ ($n\geq 1$); these are our Corollary \ref{rem:4.3} and Theorem \ref{thm:4.2}. Whether these results hold for more general $\mathcal D$ remains unanswered. 

The results of this article were announced previously in \cite{KM1}.

\section{Homogeneous Holomorphic vector bundles}
We consider symmetric domains $\mathcal D$ in their standard realization. We assume throughout that $\mathcal D$ is irreducible; this is sufficient for our purpose since every  bounded symmetric domain is biholomorphically equivalent to a product of such.  As Harish-Chandra showed (cf. \cite{Helg}), every irreducible $\mathcal D$ can be constructed as follows. 

Let $\mathfrak g$ be a simple non-compact real Lie algebra with Cartan decomposition $\mathfrak g = \mathfrak k + \mathfrak p$ such that $\mathfrak k$ is not semi-simple. Then $\mathfrak k$ is the direct sum of its center and  of its semisimple part, $\mathfrak k = \mathfrak z + \mathfrak k_{\rm ss},$ and there is an element $\hat{z}$ which generates $\mathfrak z$ and $\ad(\hat{z})$ is a complex structure on $\mathfrak p.$

The complexification $\mathfrak g^{\mathbb C}$ is then the direct sum $\mathfrak p^+ + \mathfrak k^\mathbb C + \mathfrak p^-$ of the $i,0,-i$ eigenspaces of $\ad(\hat{z})$.  On $\mathfrak g^{\mathbb C}$, we have the usual inner product $B_\nu(X,Y) = - B(X, \nu Y)$, where $B$ is the Killing form and $\nu$ is the conjugation with respect to the compact real form $\mathfrak k+ i \mathfrak p$. 
We let $G^\mathbb C$ denote the simply connected Lie group with Lie algebra $\mathfrak g^\mathbb C$ and we let $G, K^\mathbb C, K, P^{\pm}, Z,\ldots $ be the analytic subgroups corresponding to $\mathfrak g, \mathfrak k^\mathbb C, \mathfrak k, \mathfrak p^{\pm},\mathfrak z\ldots.$ 
We denote by $\tilde{G}$ the universal covering group of the group $G$ and by $\tilde{K},\tilde{K}_{\rm ss},\tilde{Z}\ldots $ its analytic subgroups corresponding to $\mathfrak k,\mathfrak k_{\rm ss},\mathfrak z\ldots .$  Then $\tilde{K}$ is the universal cover of $K.$ $\tilde{K}$  is also contained in  $\tilde{K}^\mathbb C,$  the universal cover of  $K^\mathbb C.$

$K^\mathbb CP^-$ is a parabolic subgroup of $G^\mathbb C.$ $P^+K^\mathbb CP^-$ is open dense in $G^\mathbb C.$ The corresponding decomposition $g^+g^0g^-$ of any $g$ in $P^+K^\mathbb CP^-$ is unique and holomorphic. The natural map $G/K \to G^\mathbb C/K^\mathbb C P^-$ is a holomorphic imbedding, its image is in the orbit of $P^+.$   Applying now $\exp_{\mathfrak p^+}^{-1}$ we get the Harish-Chandra realization of $G/K$ as a bounded symmetric domain $\mathcal D\subset\mathfrak p^+\cong \mathbb C^n.$  The kernel of the action is the (finite) center of $G$. The action of $g\in G$ on $z\in \mathcal D,$ written $g\cdot z,$  is then defined by $\exp(g\cdot z) = (g \exp z)^+.$  
We will use the notations $k(g,z) = (g \exp z)^0$ and $\exp Y(g,z) = (g \exp z)^-,$ so we have  
\begin{equation}\label{1.1}
g\exp z = (\exp(g\cdot z)) k(g,z) \exp (Y(g,z)).
\end{equation}

%
The $\tilde{G}$ - homogeneous  holomorphic vector bundles (hhvb-s) over $\mathcal D$  are 
obtained by holomorphic induction from finite dimensional joint representations of the pair $(\tilde{K}, \mathfrak k^\mathbb C+\mathfrak p^-)$. Now $\tilde{K}$ is simply connected, so this is the same as a pair $(\varrho^0, \varrho^-)$ of representations of $\mathfrak k^\mathbb C$ resp. $\mathfrak p^-$ on  a vector space $V,$  satisfying 

\begin{equation} \label{1}
\varrho^-([Z,Y]) = [\varrho^0(Z), \varrho^-(Y)], \:\:\:\: Z\in \mathfrak k^\mathbb C,\, Y\in \mathfrak p^-.
\end{equation}
This condition can also be equivalently written as 
\begin{equation} \label{1.1equiv} 
\varrho^- (\Ad(k) Y) = \varrho^0(k) \varrho^-(Y)\varrho^0(k)^{-1}, \,\, Y\in \mathfrak p^-, k\in \tilde{K}.
\end{equation}
(We use the same symbols to denote the representations of Lie groups and their Lie algebras.)

We will refer to such a pair simply as the representation $(\varrho, V)$.

The homogeneous Hermitian  holomorphic vector bundles 
arise from representations $(\varrho, V)$ such that $V$ has an (arbitrary, fixed) $\varrho^0(\tilde{K})$- invariant inner product. In this case, we call $(\varrho, V)$ a Hermitian representation. 

For an invariant inner product to exist on $V$ it is necessary and sufficient (since $\hat{z}$ generates $\mathfrak z$ and $\tilde{K}_{\rm ss}$ is compact) that $\varrho^0(\hat{z})$ should be diagonalizable and have purely imaginary eigenvalues.  If $(\varrho, V)$ has this property, we say it is a Hermitizable representation, and the holomorphically induced bundle is a Hermitizable  homogeneous holomorphic vector bundle (abbreviated \emph{Hhhvb}). 

Given a Hhhvb, it is easy to describe all its possible structures making it homogeneously Hermitian, and most of our results will be independent of the particular structure chosen.  This is mainly due to the following well-known consequence of Schur's Lemma: In the direct decomposition of $V$ under $\varrho^0(\tilde{K})$, the isotypic subspaces are orthogonal to each other, no matter which invariant inner product is chosen.  Such representations and bundles are the main  objects of our study. 

%
%
%

Since $\hat{z}$ spans the center $\mathfrak z$ of $\mathfrak k,$ $\chi_\lambda(\hat{z})=i\lambda$ defines a character of $\mathfrak k.$
By Schur's Lemma $V$ is the orthogonal sum of $\varrho^0$- invariant subspaces $V^\lambda$ on which $\varrho^0(\hat{z}) = i \lambda$ $(\lambda\in \mathbb R).$  For any $\lambda,$  we have 
\begin{equation} \label{1.2}  
\varrho^-(Y)V^\lambda \subseteq V^{\lambda-1}, \,\, Y\in \mathfrak p^-
\end{equation}
because for any $v_\lambda$ in $V^\lambda,$
\begin{eqnarray*}
\varrho^\circ(\exp t\hat{z}) (\varrho^-(Y) v_\lambda) &=& \varrho^-(\mathrm{Ad}(\exp\, t \hat{z})Y) \varrho^\circ(\exp\, t \hat{z}) 
v_\lambda\\
&=&\varrho^-(e^{-it} Y) e^{\lambda i t} v_\lambda\\
&=& e^{(\lambda-1)it}\big (\varrho^-(Y) v_\lambda\big ). 
\end{eqnarray*}
It follows immediately that for indecomposable Hermitizable $(\varrho,V)$ we have 
\begin{equation} \label{dirdecV_j}
V=\oplus_{j=0}^m V_j
\end{equation}
an orthogonal sum of representations $(\varrho_j^0, V_j)$ of $\mathfrak{k}$
such that $\varrho_j^0(\hat{z})=i(\lambda-j)$ with some $\lambda \in \mathbb R$
determined by $\varrho.$ Writing, for $Y\in \mathfrak{p}^-, \,1\leq j \leq m,$ 
$$
\varrho_j^-(Y)= \varrho^-(Y)_{|\,_{V{j-1}}}
$$
we have $\varrho_j^-(Y)\in \Hom(V_{j-1},V_j).$ These will be our standing notations. We observe that $\varrho^-(Y)$ is just the direct sum of the $\varrho_j^-(Y),$ $(1\leq j \leq m).$ We also note that \eqref{1.1equiv} can be written in the concise form 
\begin{equation}\label{1.1concise}
\varrho_j^- \in \Hom \big(\mathfrak p^-, \Hom(V_{j-1},V_j)\big )^{\tilde{K}},
\end{equation}
where the superscript $\tilde{K}$ means the $\tilde{K}$- invariant elements in the space.

At this point, we note that an indecomposable $\varrho$ determines a real number $\lambda.$ So we can always write $\varrho = \chi_\lambda \otimes \varrho^{\rm nor},$ where the real number determined by $\varrho^{\rm nor}$ is $0.$

Setting 
$$\tilde{V}_j=V_j\oplus \cdots \oplus V_m$$
it is clear that $\tilde{V}_j$ is an invariant subspace for $\varrho$.  The representation induced by $\varrho$ on $\tilde{V}_j/\tilde{V}_{j+1}$ is isomorphic with the representation $(\varrho^0_j,0)$ (meaning $\varrho^0_j$ on $\mathfrak k^\mathbb C$ and $0$ on $\mathfrak p^-$.)

We write $\Ad_{\mathfrak p^+}, \Ad_{\mathfrak p^-}$ for the adjoint representation restricted to $\tilde{K}$ or $\tilde{K}^\mathbb C$ or $\mathfrak k^\mathbb C$ acting on $\mathfrak p^+,$ resp. $\mathfrak p^-.$ They are irreducible (since $\mathfrak g$ is simple) and they leave invariant the natural Hermitian inner product $B_\nu$ of $\mathfrak g^\mathbb C$ restricted to $\mathfrak p^\pm.$

\begin{lem} \label{lem 1.1n}
Let $W \subseteq V_{j-1}$ be an irreducible subspace for $\varrho^0_{j-1}.$ Then the subspace $\varrho_j^-(\mathfrak p^-)W$ of $V_j$ as a $\tilde{K}$ - representation is equivalent to a subrepresentation
of $\mathfrak p^-\otimes W.$
\end{lem}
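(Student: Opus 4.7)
The plan is to realize the ``multiplication'' map $Y \otimes w \mapsto \varrho_j^-(Y)w$ as a $\tilde{K}$-equivariant surjection from $\mathfrak{p}^- \otimes W$ onto $\varrho_j^-(\mathfrak{p}^-)W$, and then split off its kernel by complete reducibility.

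First, I would define $\Phi \colon \mathfrak{p}^- \otimes W \to V_j$ by $\Phi(Y \otimes w) = \varrho_j^-(Y)w$, extended linearly, and show that $\Phi$ is $\tilde{K}$-equivariant from $\Ad_{\mathfrak{p}^-} \otimes \varrho^0_{j-1}$ to $\varrho^0_j$. This is immediate from \eqref{1.1equiv} applied to the component $\varrho_j^-$, which (repackaged as \eqref{1.1concise}) says exactly that $\varrho_j^-$ is a $\tilde{K}$-invariant element of $\Hom(\mathfrak{p}^-, \Hom(V_{j-1}, V_j))$. By construction the image of $\Phi$ is $\varrho_j^-(\mathfrak{p}^-)W$.

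Second, because we are in the Hermitizable setting, $W$ inherits a $\varrho^0_{j-1}(\tilde{K})$-invariant inner product from $V_{j-1}$; combining this with the $\tilde{K}$-invariant inner product $B_\nu|_{\mathfrak{p}^-}$ produces a $\tilde{K}$-invariant inner product on $\mathfrak{p}^- \otimes W$. Hence $\mathfrak{p}^- \otimes W$ is completely reducible as a $\tilde{K}$-representation, and one can pick a $\tilde{K}$-invariant complement $U$ with $\mathfrak{p}^- \otimes W = \ker \Phi \oplus U$. The restriction $\Phi|_U$ is then an injective $\tilde{K}$-intertwiner onto $\varrho_j^-(\mathfrak{p}^-)W$, which exhibits the latter as equivalent to the subrepresentation $U$ of $\mathfrak{p}^- \otimes W$, as required.

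There is no real obstacle: once $\Phi$ is identified as a $\tilde{K}$-intertwiner, the lemma reduces to complete reducibility, which is automatic here. I also note that the irreducibility of $W$ is not actually used in the argument; it merely reflects how the lemma will be applied in the sequel (to track how irreducible $\tilde{K}$-types of $V_{j-1}$ can propagate into $V_j$ under the action of $\varrho_j^-$).
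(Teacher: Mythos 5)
Your proposal is correct and is essentially the paper's own argument: the paper likewise observes that $(Y,w)\mapsto \varrho_j^-(Y)w$ defines a $\tilde{K}$-equivariant map $\mathfrak p^-\otimes W\to V_j$ via \eqref{1.1equiv} and identifies the range with a $\tilde{K}$-invariant complement of the kernel. Your extra remarks (the invariant inner products guaranteeing the complement, and that irreducibility of $W$ is not really used) just make explicit what the paper leaves implicit.
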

\begin{proof}
The map $(Y,w) \mapsto \varrho_j^-(Y)w$ of $\mathfrak p^-\times W$ (hence also of $\mathfrak p^-\otimes W$) into $V_j$ is $\tilde{K}$-  equivariant, since we have 
$$
\varrho_j^-(\Ad(k)Y) \varrho_{j-1}^0(k)w = \varrho_j^0(k)\varrho_j^-(Y)w
$$ 
by \eqref{1.1equiv}. The range of the map is then isomorphic to the $\tilde{K}$ invariant complement of its kernel. 
\end{proof}

The analysis of the Hermitian representations of $\mathfrak k^\mathbb C +\mathfrak p^-$ will be continued in Section 2. Here, the following lemmas lead to Proposition \ref{prop 1.1} and Theorem \ref{prop 1.2}, a structure theorem of indecomposable Hhhvb-s, which we will actually not use in the rest of this paper. 

As usual, for an irreducible $\mathcal D$, we write $p= (r-1)a + b +2$, where $r$ is the rank and $a, b$ are the multiplicities of the long 
(resp. short) restricted roots different from the Harish-Chandra strongly orthogonal roots.  

\begin{lem} \label{lem 1.1}
$Z\cap K_{\rm ss}$ is a finite cyclic group generated by $\exp t \hat{z},$ where $t= 2\pi \frac{p}{n}$ is the smallest positive $t$ such that $\exp t \hat{z}\in K_{\rm ss}.$
\end{lem}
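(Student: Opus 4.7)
The proof proceeds in three steps. First I would show that $Z \cap K_{\rm ss}$ is finite cyclic. Since $\mathfrak k = \mathfrak z \oplus \mathfrak k_{\rm ss}$ is a Lie-algebra direct sum (abelian plus semisimple), $\mathfrak z \cap \mathfrak k_{\rm ss} = 0$, so $Z \cap K_{\rm ss}$ has trivial tangent space; as a closed subgroup of the compact torus $Z \cong S^1$ it is finite cyclic. The set $S := \{t \in \mathbb R : \exp t\hat z \in K_{\rm ss}\}$ is a closed non-trivial subgroup of $\mathbb R$ (non-trivial because $Z$ is compact, so $\exp t\hat z = e \in K_{\rm ss}$ for some $t>0$), hence $S = t_0 \mathbb Z$ for a unique smallest $t_0 > 0$, and $Z \cap K_{\rm ss}$ is generated by $\exp t_0 \hat z$.

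Next, I would locate $t_0$ in the lattice $(2\pi/n)\mathbb Z$ via the Jacobian character $\chi := \det \mathrm{Ad}_{\mathfrak p^+}$. Since $K_{\rm ss}$ is semisimple, $\chi|_{K_{\rm ss}} = 1$; and since $\mathrm{ad}(\hat z) = i$ on the $n$-dimensional space $\mathfrak p^+$, we have $\chi(\exp t\hat z) = e^{int}$. Therefore $e^{int_0} = 1$, forcing $t_0 = 2\pi k/n$ for some positive integer $k$.

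The hard part is to show $k = p$. Equivalently, $\chi$ should be the $p$-th power of a primitive character $\chi^*$ of $K/K_{\rm ss}$; in weight language, this is the identity $2\rho_n = p\,\omega$ in $\mathfrak h^*$, where $\rho_n = \frac{1}{2}\sum_{\alpha\in\Delta_n^+}\alpha$ (so $\mathrm{wt}(\chi) = 2\rho_n$) and $\omega$ is the fundamental weight dual to the unique noncompact simple root of the Harish-Chandra positive system. This $\omega$ vanishes on $\mathfrak h \cap \mathfrak k_{\rm ss}$ by duality to the compact simple coroots, and one must also check it is integral for $K$ (not merely for $\tilde K$), whereupon it represents $\chi^*$. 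The identity $2\rho_n = p\,\omega$ is a classical fact about irreducible Hermitian symmetric spaces, where the genus $p = (r-1)a + b + 2$ enters naturally from the restricted root system. Granting it, $2\rho_n(\hat z) = in$ yields $\omega(\hat z) = in/p$, so $\chi^*(\exp t\hat z) = e^{int/p}$, and the smallest positive $t$ with $e^{int/p}=1$ is $t_0 = 2\pi p/n$. The main obstacle is thus the identity $2\rho_n = p\,\omega$ together with the primitivity and integrality of $\omega$ as a $K$-character.
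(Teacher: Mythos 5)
Your argument is correct in outline but takes a genuinely different route from the paper. The paper disposes of the lemma in two sentences: finiteness follows because $Z\cap K_{\rm ss}$ is central in $K_{\rm ss}$, cyclicity because it sits inside $Z$, and the quantitative statement $t_0=2\pi\tfrac{p}{n}$ is simply quoted from Schlichtkrull's result on one-dimensional $K$-types, after using the computation in [KW, Sec.\ 3] to match $\hat z$ with Schlichtkrull's generator. You instead argue intrinsically: your first two steps (discreteness of $Z\cap K_{\rm ss}$ from $\mathfrak z\cap\mathfrak k_{\rm ss}=0$, and the Jacobian character $\det\Ad_{\mathfrak p^+}$, trivial on $K_{\rm ss}$ and equal to $e^{int}$ on $\exp t\hat z$, forcing $t_0\in\tfrac{2\pi}{n}\mathbb Z$) are fine and self-contained. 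The part you flag as needing checking is easier than you suggest, given the paper's convention that $G^{\mathbb C}$ is simply connected: the maximal torus of $K$ is a maximal torus of $G^{\mathbb C}$, so its character lattice is the full weight lattice $P$, and the characters of $K$ trivial on $K_{\rm ss}$ are exactly the elements of $P$ orthogonal to the compact simple coroots, i.e.\ $\mathbb Z\,\omega$; this gives both integrality and primitivity of $\omega$ at once. What remains as the real content is the identity $2\rho_n=p\,\omega$ (equivalently $\langle 2\rho_n,\alpha^\vee\rangle=(r-1)a+b+2$ for the noncompact simple root $\alpha$), which you grant as classical. It is true, and it carries exactly the information the paper outsources to Schlichtkrull; so as written your proposal, like the paper's proof, rests on a quoted fact, and to make it complete you should either cite a source for that identity (it can be extracted from the strongly orthogonal root/restricted root data, cf.\ the formulas in Faraut--Kor\'anyi or the unitary highest weight module literature) or include the short computation deriving it. What your approach buys is independence from Schlichtkrull's theorem and from the normalization comparison in [KW]; what the paper's approach buys is brevity.
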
 
\begin{proof}
The group is finite since it is central in $K_{\rm ss}$ and cyclic because it is a subgroup of $Z$. Using the computation in \cite[Sec. 3]{KW} of the relation between $\hat{z}$ and the generator used by Schlichtkrull, \cite[Prop 3.4]{Schl} gives that $\exp \,t \frac{p}{n} \hat{z}\in K_{\rm ss}$ if and only if $t \in 2 \pi \mathbb Z$.  This implies the Lemma.
\end{proof}

We write $\pi: \tilde{G} \to G$ for the covering map.  Corresponding to the direct product $\tilde{K} = \tilde{Z} \cdot \tilde{K}_{\rm ss}$, every irreducible representation of $\tilde{K}$ is uniquely of the form $\chi_\lambda \otimes \sigma$, where 
$\chi_\lambda(\exp t \hat{z}) = e^{i t \lambda}$ and $\sigma$ is an irreducible representation of $\tilde{K}_{\rm ss}$ extended trivially to  $Z.$  By \cite[Cor. 3.2]{Schl}, $K_{\rm ss}$ is simply connected, so $\pi_{\vert \tilde{K}_{\rm ss}}$ is an isomorphism.  By Lemma \ref{lem 1.1}, $\big ( \pi_{| \tilde{K}_{\rm ss}}\big )^{-1}(\exp_G(2 \pi \frac{p}{n} \hat{z}))$ is in the center of $\tilde{K}_{\rm ss},$ hence of $\tilde{K}.$  So, by Schur's Lemma, there is a well-defined residue class $\Lambda(\sigma)$ in $\mathbb R/\frac{n}{p} \mathbb Z$ such that 
\begin{equation} \sigma \big ( (\pi_{| \tilde{K}_{\rm ss}})^{-1}(\exp_G 2\pi \frac{p}{n}\hat{z}) \big ) = e^{2 \pi i \frac{p}{n} \Lambda(\sigma)} I \end{equation}
(with a little abuse of notation).

We write  ${\rm Ad}^\prime_{\mathfrak p^-}$ for  ${\rm Ad}_{\mathfrak p^-}$ restricted to $\tilde{K}_{\rm ss}.$  By ${\rm ad}_{\mathfrak p^-}( \hat{z})= -i,$ we have, $\Ad_{\mathfrak p^-}= \chi_{-1} \otimes\Ad_{\mathfrak p^-}^\prime,$ and so 
\begin{equation}
\Lambda({\rm Ad}^\prime_{\mathfrak p^-})\equiv -1
\end{equation}
\begin{lem} \label{lem 1.2}
\begin{enumerate} \item[(i)] If $\sigma,\, \sigma^\prime,\, \sigma^{\prime\prime}$ are irreducible and $\sigma$ is contained in $\sigma^\prime\otimes \sigma^{\prime\prime}$, then $\Lambda(\sigma) = \Lambda(\sigma^\prime) + \Lambda(\sigma^{\prime \prime})$.   
\item[(ii)] The irreducible representation $\chi_\lambda\otimes \sigma$ of $\tilde{K}$ is the lift under $\pi$ of a representation of $K$ if and only if $\lambda \in \Lambda(\sigma)$.  
\end{enumerate}
\end{lem}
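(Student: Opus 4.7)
The plan is to exploit the fact that $\Lambda(\sigma)$ is precisely the parameter measuring how the central element
$$z_0 := (\pi_{|\,\tilde K_{\rm ss}})^{-1}\bigl(\exp 2\pi \tfrac{p}{n}\hat z\bigr)\in \tilde K_{\rm ss}$$
acts in $\sigma$. Since $\exp 2\pi\tfrac{p}{n}\hat z$ lies in $Z\cap K_{\rm ss}$ by Lemma \ref{lem 1.1} and is therefore central in $K_{\rm ss}$, its preimage $z_0$ is central in $\tilde K_{\rm ss}$, and Schur's lemma forces every irreducible representation $\sigma$ of $\tilde K_{\rm ss}$ to send $z_0$ to the scalar $e^{2\pi i\frac{p}{n}\Lambda(\sigma)}I$ by the definition just preceding the statement. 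For part (i) I would then compute $(\sigma'\otimes\sigma'')(z_0)$: because $z_0$ is central, this equals $\sigma'(z_0)\otimes\sigma''(z_0)=e^{2\pi i\frac{p}{n}(\Lambda(\sigma')+\Lambda(\sigma''))}I$. Since $\sigma$ is a subrepresentation of $\sigma'\otimes\sigma''$, its scalar $e^{2\pi i\frac{p}{n}\Lambda(\sigma)}$ must coincide with this, giving the desired equality in $\mathbb R/\tfrac{n}{p}\mathbb Z$ after matching exponents.

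For part (ii) the strategy is to identify $\ker(\pi_{|\tilde K})$ explicitly and then check when $\chi_\lambda\otimes\sigma$ is trivial on it. As noted in the text, $\tilde K=\tilde Z\cdot \tilde K_{\rm ss}$ is a direct product (both factors are simply connected with Lie algebras summing to $\mathfrak k=\mathfrak z\oplus\mathfrak k_{\rm ss}$), with $\tilde Z\cong\mathbb R$ parametrized by $t\mapsto\exp(t\hat z)$. The covering map sends $(\exp(t\hat z),\tilde k)\mapsto \exp_K(t\hat z)\,\pi(\tilde k)$, and such an element lies in the kernel iff $\exp_K(t\hat z)\in K_{\rm ss}$. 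By Lemma \ref{lem 1.1} this happens precisely when $t\in 2\pi\tfrac{p}{n}\mathbb Z$, so $\ker(\pi_{|\tilde K})$ is the cyclic subgroup generated by
$$\gamma=\bigl(\exp 2\pi\tfrac{p}{n}\hat z,\; z_0^{-1}\bigr).$$

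Since $\chi_\lambda\otimes\sigma$ descends to a representation of $K$ iff it is trivial on $\ker(\pi_{|\tilde K})$, it suffices to evaluate it on the single generator $\gamma$. This gives
$$(\chi_\lambda\otimes\sigma)(\gamma)=e^{2\pi i\frac{p}{n}\lambda}\,\sigma(z_0)^{-1}=e^{2\pi i\frac{p}{n}(\lambda-\Lambda(\sigma))},$$
which equals $1$ exactly when $\lambda-\Lambda(\sigma)\in\tfrac{n}{p}\mathbb Z$, i.e., when $\lambda\in\Lambda(\sigma)$. The only mildly delicate step is the direct-product factorization of $\tilde K$ together with pinning down the generator of the kernel; both are standard consequences of simple connectedness of $\tilde K_{\rm ss}$ and $\tilde Z$ combined with Lemma \ref{lem 1.1}, so no genuine obstacle arises.
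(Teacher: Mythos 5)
Your proposal is correct and follows essentially the same route as the paper: part (i) is the immediate Schur-lemma computation of the scalar by which the central element acts on a tensor product, and part (ii) identifies $\ker(\pi_{|\tilde K})$ via Lemma \ref{lem 1.1} and the isomorphism $\pi_{|\tilde K_{\rm ss}}$ and evaluates $\chi_\lambda\otimes\sigma$ on its generator, which is exactly the paper's reduction of condition \eqref{1.5} to the case $\ell=1$. Writing the kernel explicitly as the cyclic group generated by $\gamma=(\exp 2\pi\tfrac{p}{n}\hat z,\,z_0^{-1})$ is only a cosmetic repackaging of the same argument.
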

\begin{proof}
(i) is trivial.  For (ii), we note that $\pi$ maps a generic element $\tilde{k}_{\rm ss}\exp_{\tilde{G}}t \hat{z}$ to $\big ( \pi_{|\tilde{K}_{\rm ss}}\big ) (\tilde{k}_{\rm ss}) \exp_G t\hat{z}$.  $\chi_\lambda\otimes \sigma$ is a lift if and only if it is trivial on $\ker(\pi_{| \tilde{K}})$, i.e., if  and only if the condition 
\begin{equation} \label{1.4} \pi(\tilde{k}_{\rm ss}) = \exp_G - t \hat{z}\end{equation}
implies $\sigma(\tilde{k}_{\rm ss}) = e^{-i \lambda t}I$.  By Lemma \ref{lem 1.1}, \eqref{1.4} holds for some $\tilde{k}_{\rm ss}$ if and only if $t=- 2 \pi \frac{p}{n} \ell$ with $\ell \in \mathbb Z$, and in this case $\tilde{k}_{\rm ss}= \big ( \pi_{|\tilde{K}_{\rm ss}}\big )^{-1}(\exp_G 2 \pi \frac{p}{n} \ell)$.  So, finally $\chi_\lambda \otimes \sigma$ is a lift if and only if
\begin{equation} \label{1.5}\sigma \big ( \big ( \pi_{|\tilde{K}_{\rm ss}}\big )^{-1}(\exp_G 2 \pi \frac{p}{n} \ell \hat{z})\big ) = e^{2 \pi i\frac{p}{n} \ell \lambda} I
\end{equation}
for all $\ell \in \mathbb Z$. Clearly, this holds for all $\ell$ if and only if it holds for $\ell =1$.  For $\ell=1$, the left hand side is $e^{2 \pi i\frac{p}{n}  \Lambda(\sigma)}I$ by definition of $\Lambda(\sigma)$.  Hence \eqref{1.5}  holds if and only if $\lambda \in \Lambda(\sigma)$ finishing the proof.

\end{proof}



\begin{defn}
A Hermitizable  representation $(\varrho, V)$ of $\mathfrak k^\mathbb C+\mathfrak p^-$ (and the Hhhvb induced by it) is said to be \emph{elementary} if for some $\lambda \in \mathbb R$, $m\in \mathbb N$, it is of the form $\oplus_{j=0}^m V_j$ with $\ad(\hat{z})=i(\lambda-j)$ on $V_j$ and if $\Lambda(\sigma) + j$ is the same for every irreducible component $\sigma$ of $\varrho_{j}^{0,{\rm ss}}$ (meaning $\varrho^0_{j}$ restriccted to $\mathfrak k_{\rm ss}),$ for every $0 \leq j \leq m$.
\end{defn}
\begin{prop} \label{prop 1.1}
If $(\varrho, V)$ is an  indecomposable Hermitian representation of $\mathfrak k^\mathbb C+ \mathfrak p^-$ (i.e.  the induced holomorphic homogeneous Hermitian vector bundle is irreducible), then it is elementary.
\end{prop}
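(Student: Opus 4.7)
The plan is to decompose $V$ according to the residue $\Lambda(\sigma)+j\in\mathbb R/\tfrac{n}{p}\mathbb Z$ attached to each $\tilde K_{\rm ss}$-isotypic component sitting in $V_j$, and to show that each of the resulting summands is $(\varrho^0,\varrho^-)$-invariant. Indecomposability will then force all but one summand to vanish, which is precisely the condition in the definition of elementary.

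Starting from the orthogonal decomposition $V=\oplus_{j=0}^{m}V_j$ of \eqref{dirdecV_j}, I would further decompose each $V_j$ into $\tilde K_{\rm ss}$-isotypic pieces. Since $\hat z$ acts by the scalar $i(\lambda-j)$ on $V_j$, every irreducible $\tilde K$-constituent of $V_j$ has the form $\chi_{\lambda-j}\otimes\sigma$ for some irreducible representation $\sigma$ of $\tilde K_{\rm ss}$, and $\sigma$ carries a well-defined residue $\Lambda(\sigma)\in\mathbb R/\tfrac{n}{p}\mathbb Z$. For each $c\in\mathbb R/\tfrac{n}{p}\mathbb Z$ put $V_j^{(c)}$ equal to the sum of those $\sigma$-isotypic subspaces of $V_j$ for which $\Lambda(\sigma)+j\equiv c$, and let $V^{(c)}=\oplus_{j=0}^{m}V_j^{(c)}$. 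Only finitely many $V^{(c)}$ are nonzero, and $V=\oplus_c V^{(c)}$ is an orthogonal direct sum of $\tilde K$-submodules.

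The key step is the stability of each $V^{(c)}$ under $\varrho^-$. Let $W\subseteq V_{j-1}$ be an irreducible $\tilde K_{\rm ss}$-subspace of type $\sigma$. By Lemma~\ref{lem 1.1n}, $\varrho_j^-(\mathfrak p^-)W$ is equivalent, as a $\tilde K$-representation, to a subrepresentation of $\mathfrak p^-\otimes W$, which as a $\tilde K_{\rm ss}$-module is $\Ad'_{\mathfrak p^-}\otimes\sigma$. Every irreducible $\tilde K_{\rm ss}$-constituent $\sigma'$ of this tensor product satisfies, by Lemma~\ref{lem 1.2}(i) combined with $\Lambda(\Ad'_{\mathfrak p^-})\equiv -1$, the identity $\Lambda(\sigma')\equiv\Lambda(\sigma)-1$. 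Consequently $\Lambda(\sigma')+j\equiv\Lambda(\sigma)+(j-1)\equiv c$, so $\varrho^-(\mathfrak p^-)V_{j-1}^{(c)}\subseteq V_j^{(c)}$, and $V^{(c)}$ is a genuine $(\mathfrak k^{\mathbb C}+\mathfrak p^-)$-subrepresentation.

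Finally, with $V=\oplus_c V^{(c)}$ a direct sum decomposition into $(\mathfrak k^{\mathbb C}+\mathfrak p^-)$-invariant subspaces, the indecomposability of $(\varrho,V)$ forces exactly one $V^{(c)}$ to be nonzero. This says that $\Lambda(\sigma)+j$ takes a common value modulo $\tfrac{n}{p}$ across all irreducible components $\sigma$ of $\varrho_j^{0,{\rm ss}}$ and all $0\le j\le m$, i.e.\ that $(\varrho,V)$ is elementary. The only non-mechanical step is the $\Lambda$-shift for $\varrho^-$, which is immediate from Lemmas~\ref{lem 1.1n} and \ref{lem 1.2}(i); everything else simply packages Schur's lemma and the definition of $V^{(c)}$.
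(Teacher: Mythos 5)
Your proposal is correct and follows the paper's own argument: the paper likewise groups the irreducible constituents of the $V_j$ by the residue class $\Lambda(\sigma)+j$, observes via Lemmas \ref{lem 1.1n} and \ref{lem 1.2} (with $\Lambda(\Ad'_{\mathfrak p^-})\equiv -1$) that each such summand is invariant under $\varrho^0$ and $\varrho^-$, and then invokes indecomposability. You have simply written out explicitly the $\Lambda$-shift computation that the paper calls immediate.
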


\begin{proof} Let $(\varrho, V)$be indecomposable. In \eqref{dirdecV_j} we have already seen that there is a decomposition $\oplus V_j$ as stated. Now, let $\Lambda$ be a residue class in $\mathbb R/\tfrac{n}{p} \mathbb Z$ and let $V(\Lambda)$ denote the direct sum of all the irreducible constituents $\sigma$ of $V_j,$ for each $1 \leq j \leq m,$   such that $\Lambda(\sigma)+j=\Lambda.$  It is immediate from Lemmas \ref{lem 1.1n} and \ref{lem 1.2} that $V(\Lambda)$ is invariant under both $\varrho^0$ and $\varrho^-.$ Hence by indecomposability, there can be only one class $\Lambda$  such that $V(\Lambda) \not=0.$

%
%
\end{proof}
\begin{thm} \label{prop 1.2}
Every elementary  { Hhhvb} $E$ can be written as a tensor product $L_{\lambda_0} \otimes E^\prime$, where $L_{\lambda_0}$ is the line bundle induced by the character $\chi_{\lambda_0}$ and $E$ is the lift to $\tilde{G}$ of a $G$ - homogeneous  holomorphic  Hermitian vector bundle which is the restriction to $G$ and $\mathcal D$ of a $G^\mathbb C$ - homogeneous vector bundle over $G^\mathbb C/K^\mathbb C P^-$ induced in the holomorphic category by a representation of $K^\mathbb CP^-$.
\end{thm}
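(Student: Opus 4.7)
The plan is to normalize $(\varrho,V)$ by an appropriate character so that the remainder descends from $\tilde K$ to $K$ and then extends to $K^{\mathbb C} P^-$. Since $(\varrho,V)$ is elementary, there is a single residue class $\Lambda\in\mathbb R/\tfrac{n}{p}\mathbb Z$ with $\Lambda(\sigma)+j=\Lambda$ for every irreducible summand $\sigma$ of $\varrho_j^{0,\mathrm{ss}}$ and every $0\le j\le m$. I will pick any real representative $\lambda_0$ of the class $\lambda-\Lambda$ and set $\varrho':=\chi_{-\lambda_0}\otimes\varrho$. Twisting by a central character commutes with $\mathrm{Ad}$ on $\mathfrak p^-$, so $\varrho'$ still satisfies \eqref{1.1equiv}, and the decomposition $\varrho=\chi_{\lambda_0}\otimes\varrho'$ will account for the factor $L_{\lambda_0}$.

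Next I verify that $\varrho'^0$ descends from $\tilde K$ to $K$. On each $V_j$ we have $\varrho'^0(\hat z)=i(\lambda-\lambda_0-j)$, while for any irreducible summand $\sigma\subset\varrho_j^{0,\mathrm{ss}}$ the choice of $\lambda_0$ gives $\lambda-\lambda_0-j\in\Lambda-j=\Lambda(\sigma)$. Lemma \ref{lem 1.2}(ii) then says that each summand $\chi_{\lambda-\lambda_0-j}\otimes\sigma$ is the lift of a representation of $K$. Summing over all $j$ and all summands, $\varrho'^0$ itself descends to a representation of $K$, and hence holomorphically continues to a representation of the complexification $K^{\mathbb C}$.

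It remains to integrate the Lie-algebra data $(\varrho'^0,\varrho'^-)$ into a holomorphic representation of the parabolic $K^{\mathbb C}P^-$. Because $P^-=\exp\mathfrak p^-$ is simply connected, $\varrho'^-$ exponentiates uniquely to a holomorphic representation of $P^-$; the compatibility \eqref{1.1equiv} then forces the semidirect-product relation
\[
\varrho'(k)\,\varrho'(\exp Y)\,\varrho'(k)^{-1}=\varrho'\bigl(\exp\mathrm{Ad}(k)Y\bigr),
\]
first for $k\in K$ (by exponentiation), and then, by holomorphic continuation in $k$, for all $k\in K^{\mathbb C}$. This yields the desired holomorphic representation of $K^{\mathbb C}P^-$. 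Holomorphically inducing it produces a $G^{\mathbb C}$-homogeneous bundle over $G^{\mathbb C}/K^{\mathbb C}P^-$; restricting to $\mathcal D\cong G/K$ gives a $G$-homogeneous Hermitian bundle $E'$, and its lift to $\tilde G$ tensored with $L_{\lambda_0}$ reconstructs $E$.

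The decisive step is the passage from the Lie algebra to the group (paragraphs two and three): it succeeds precisely because the elementary hypothesis makes the required character adjustment \emph{uniform}, so that a single twist by $\chi_{-\lambda_0}$ simultaneously kills the obstruction to descent on every $V_j$ and every irreducible summand. Without this uniformity one could only match residue classes factor by factor and no such global $\lambda_0$ would exist, which is why the factorization is characteristic of elementary bundles.
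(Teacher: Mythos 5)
Your proof is correct and takes essentially the same route as the paper: twist by a character $\chi_{\lambda_0}$ determined by the single residue class $\Lambda$ that the elementary hypothesis provides, use Lemma \ref{lem 1.2}(ii) to see that the twisted $\varrho^{\prime,0}$ descends to $K$ and hence extends holomorphically to $K^{\mathbb C}$, and exponentiate the unchanged $\varrho^-$ over the simply connected $P^-$ to obtain a representation of $K^{\mathbb C}P^-$. The only difference is that you make explicit the uniform descent check over all $j$ and all irreducible summands, which the paper leaves implicit in its choice of $\lambda^{\prime}\in\Lambda(\sigma)$ for a single component.
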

\begin{proof} Suppose $E$ is induced by $(\varrho, V)$, $V=\oplus_0^m V^{\lambda -j}$.  We can take any irreducible component $\sigma$ of $\varrho_\lambda^{0,{\rm ss}},$ choose some $\lambda^\prime \in \Lambda(\sigma)$ and set $\lambda_0=\lambda-\lambda^\prime.$  Then we can write $\varrho^0=\chi_{\lambda_0}\otimes \varrho^{\prime,0}$. 
By Lemma \ref{lem 1.2}, $\varrho_\lambda^{\prime,0}$ is a lift of a representation of $K$ to $\tilde{K}$ and also a representation of $\mathfrak k^\mathbb C$. It follows that it extends then to a holomorphic representation of ${K}^\mathbb C$. The  $\varrho^-$ part which is unchanged gives a representation of $P^-$ since $P^-$ is simply connected.  So, we have a representation of the semidirect product $\tilde{K}^\mathbb C P^-$, and the Theorem follows.       
\end{proof}



We will study our irreducible \emph{Hhhvb}-s through a natural trivialization which can be obtained in one of  two ways. One way is based on Theorem \ref{prop 1.2}, putting together the natural trivializations of $L_\lambda$ (where the multiplier is a power of the jacobian, see e.g. \cite{KW}) and a trivialization of $E^\prime$ built from $k(g, z)$ (as defined in \eqref{1.1}). The other way, which we will actually follow, makes use of the Herb-Wolf local complexification of $\tilde{G}$ (cf. \cite{HW}). In either approach, the point is to define  a $\tilde{K}^\mathbb C$-valued multiplier $\tilde{k}(g, z)$ and prove its properties.  

We write $\pi:\tilde{K}^\mathbb C \to K^\mathbb C$ for the universal covering map. As shown in \cite{HW}, $P^+ \times \tilde{K}^\mathbb C \times P^-$ can be given 
a structure of complex analytic local group such that (writing $\pi: \tilde{K}^\mathbb C \to K^\mathbb C$)
${\rm id} \times \pi \times{\rm id}$ is the universal local group covering of $P^+K^\mathbb C P^-.$
We write $\tilde{G}_{\rm loc}$ for this local group and abbreviate ${\rm id} \times \pi \times{\rm id}$ to $\pi.$  
By \cite{HW}, $\tilde{G},\, \tilde{K}^\mathbb C P^-,$ $P^+ \tilde{K}^\mathbb C$ are closed subgroups of 
$\tilde{G}^\mathbb C_{\rm loc}$ and $\tilde{G} \exp \mathcal D \subset \tilde{G}^\mathbb C_{\rm loc}.$ $\pi$ restricted to $\tilde{G}$ is the covering map of $G.$
Defining $g\cdot z = \pi(g) \cdot z$ and $Y(g,z) = Y(\pi(g),z)$ we have the decomposition
\begin{equation} \label{1.11}
g \exp z = (\exp g\cdot z) \tilde{k}(g,z)\exp Y(g,z), \,\, (g\in \tilde{G}, z\in\mathcal D)
\end{equation}
in $\tilde{G}_{\rm loc}.$ We write $\tilde{b}(g,z) = \tilde{k}(g,z) \exp Y(g,z).$ Then  applying \eqref{1.11} twice, we have 
$$
(\exp g g^\prime z) \tilde{b}(g g^\prime,z) = g g^\prime \exp z = g(\exp g^\prime z) \tilde{b}(g^\prime, z)=(\exp g g^\prime z)\tilde{b}(g,g^\prime z) \tilde{b}(g^\prime,z)
$$
which shows that $\tilde{b}(g,z)$ satisfies the multiplier identity 
\begin{equation} \label{tildeb}
\tilde{b}(gg^\prime,z) = \tilde{b}(g,g^\prime z) \tilde{b}(g^\prime,z).
\end{equation}
Furthermore, we clearly have $\tilde{b}(kp^-,0) = k p^-$ for $kp^- \in \tilde{K}^\mathbb C P^-.$  

It follows that given a representation  $(\varrho,V)$ of $\mathfrak k^\mathbb C + \mathfrak p^-$ as above, $\varrho(\tilde{b}(g,z))$ is a multiplier, and 
\begin{equation}\label{1.13-}
\varrho(\tilde{b}(g,z)) = \varrho^0(\tilde{k}(g,z))\varrho^-(\exp Y(g,z)).
\end{equation}
The vector bundle $E^\varrho$ holomorphically induced by $\varrho$  has a  trivialization to be called the \emph{canonical trivialization} in which the space of sections is $\Hol(\mathcal D,V),$  and the $\tilde{G}$ - action on it is the multiplier representation $U^\varrho:$ 
\begin{equation}\label{1.13}
\big (U^\varrho_g f \big ) (z) = \varrho(\tilde{b}(g^{-1},z))^{-1} f(g^{-1} z).
\end{equation} 
The canonical trivialization will be used throughout the rest of this paper.

It is clear from the product expression \eqref{1.13-} that $\Hol(\mathcal D, \tilde{V}_j)$ for each $j$, is an $U^\varrho$-invariant subspace of $\Hol(\mathcal D, V)$, and the representation induced by $U^\varrho$ on $\Hol(\mathcal D, \tilde{V}_j)/\Hol(\mathcal D, \tilde{V}_{j+1}$ is the same as the representation on $\Hol(\mathcal D,V_j)$  via the multiplier $\varrho^0_j(\tilde{k}(g,z))$. In other words, we have a chain of homogeneous sub-bundles 
$\tilde{E}_j$ with $E_j = \tilde{E}_j /\tilde{E}_{j+1}$ holomorphically induced by $(\varrho^0_j,0)$ on $V_j$.

If $f\in\Hol(\mathcal D,V),$ then we write $Df$ for the derivative: $Df(z) X = (D_Xf)(z)$ for $X\in \mathfrak p^+.$ 
Thus $Df(z)$ is a $\mathbb C$ - linear map from $\mathfrak p^+$ to $V.$ The following Lemma is crucial for the computations of Section 2. 
\begin{lem} \label{lem 1.6}
For any holomorphic representation $\tau$ of $\tilde{K}^\mathbb C$ and any $g\in \tilde{G}$, $z\in\mathcal D$, $X\in \mathfrak p^+$,  
$$D_X\tau\big (\tilde{k}(g,z)^{-1}\big ) = - \tau\big ([ Y(g,z), X]\big )\tau\big (\tilde{k}(g,z)^{-1}\big ).$$
Furthermore,
$$
D_X\,Y(g,z)=\frac{1}{2} \big [Y(g,z),[Y(g,z),X]\big ].
$$
\end{lem}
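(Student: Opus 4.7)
The plan is to differentiate the defining identity \eqref{1.11}, $g\exp z = \exp(g\cdot z)\,\tilde{k}(g,z)\,\exp Y(g,z)$, with respect to $z$ in the direction $X\in\mathfrak p^+$, compare right-logarithmic derivatives on the two sides, and decompose the resulting equation in $\mathfrak g^\mathbb C = \mathfrak p^+\oplus\mathfrak k^\mathbb C\oplus\mathfrak p^-$. Both assertions then fall out of the three components; the abelianness of $\mathfrak p^+$ and of $\mathfrak p^-$ will be used crucially at several points.

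For the left side, abelianness of $\mathfrak p^+$ gives $\exp(z+tX)=\exp z\exp(tX)$ and $\Ad(\exp(\pm z))X=X$, so the right-log derivative is $\Ad(g)X$. Using the consequence $\exp(-g\cdot z)g=\tilde{b}(g,z)\exp(-z)$ of \eqref{1.11}, this equals $\Ad(\tilde{b}(g,z))X = \Ad(\tilde{k}(g,z))(e^{\ad Y}X)$ after left-multiplication by $\Ad(\exp(-g\cdot z))$. Because $\mathfrak p^-$ is abelian, $[Y,[Y,[Y,X]]]\in[\mathfrak p^-,\mathfrak p^-]=0$, so the exponential truncates to
\begin{equation*}
\Ad(\tilde{k}(g,z))\bigl(X+[Y,X]+\tfrac12[Y,[Y,X]]\bigr),
\end{equation*}
whose three summands lie in $\mathfrak p^+$, $\mathfrak k^\mathbb C$, and $\mathfrak p^-$ respectively (and are preserved there by $\Ad(\tilde K^\mathbb C)$). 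For the right side, the product rule for right-log derivatives together with abelianness of $\mathfrak p^-$ (which gives $\tfrac{d}{dt}\exp Y(t)\cdot\exp(-Y(t))=\dot Y(t)$) produces, after the same $\Ad(\exp(-g\cdot z))$ shift and using $\Ad(\exp(-g\cdot z))D_X(g\cdot z)=D_X(g\cdot z)$, the expression $D_X(g\cdot z)+D_X\tilde{k}\cdot\tilde{k}^{-1}+\Ad(\tilde{k})(D_XY)$, again with three summands in $\mathfrak p^+$, $\mathfrak k^\mathbb C$, $\mathfrak p^-$.

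Matching components yields simultaneously the Jacobian formula $D_X(g\cdot z)=\Ad(\tilde{k}(g,z))X$ in $\mathfrak p^+$, the identity $D_X\tilde{k}\cdot\tilde{k}(g,z)^{-1}=\Ad(\tilde{k}(g,z))[Y(g,z),X]$ in $\mathfrak k^\mathbb C$, and the second assertion $D_XY(g,z)=\tfrac12[Y(g,z),[Y(g,z),X]]$ in $\mathfrak p^-$ after cancellation of $\Ad(\tilde{k}(g,z))$. For the first assertion, set $h(t)=\tilde{k}(g,z+tX)^{-1}$; then
\begin{equation*}
\dot h(0)h(0)^{-1}=-\tilde{k}(g,z)^{-1}\dot{\tilde{k}}(0)=-\Ad(\tilde{k}(g,z)^{-1})\bigl(\dot{\tilde{k}}(0)\tilde{k}(g,z)^{-1}\bigr)=-[Y(g,z),X],
\end{equation*}
and applying $\tau$, with the customary identification of the Lie group and Lie algebra representations, gives $D_X\tau(\tilde{k}(g,z)^{-1})=-\tau([Y(g,z),X])\,\tau(\tilde{k}(g,z)^{-1})$, as asserted.

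The main obstacle is purely bookkeeping: keeping left- and right-log-derivative conventions straight, and invoking the abelianness of $\mathfrak p^\pm$ at precisely the right moments---to move $X$ past $\exp(\pm z)$, to obtain $\tfrac{d}{dt}\exp Y\cdot\exp(-Y)=\dot Y$, and to truncate $\exp(\ad Y)X$ to a quadratic polynomial whose three terms happen to land in exactly the three summands of $\mathfrak g^\mathbb C$. Once this is done, the single triple decomposition delivers both identities of the lemma (and the Jacobian formula as a bonus) in one stroke.
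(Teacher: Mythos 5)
Your proof is correct and follows essentially the same route as the paper: both arguments rest on writing $g\exp(z+tX)=g\exp z\,\exp tX$, moving $\exp tX$ past $\exp Y(g,z)$ via the truncated series $e^{\ad Y}X=X+[Y,X]+\tfrac12[Y,[Y,X]]$ (using that $\mathfrak p^\pm$ are abelian), and then reading off the $\mathfrak k^\mathbb C$ and $\mathfrak p^-$ components of the $P^+\tilde K^\mathbb C P^-$ decomposition. The only difference is bookkeeping: you phrase the comparison through right-logarithmic derivatives of \eqref{1.11}, while the paper equates the group-level decompositions to first order in $t$; the content, including the bonus identity $Dg(z)=\Ad_{\mathfrak p^+}\tilde k(g,z)$ of Remark \ref{rem:1.8}, is the same.
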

\begin{proof} We have, using the exponential map of $\tilde{G}^\mathbb C_{\rm loc}$
\begin{eqnarray*}
g\exp(z+tX) &=& g \exp z\,\, \exp tX\\  
&=&\exp( g z) \tilde{k}(g,z) \exp Y \exp tX \\
&=& \exp(g z) \tilde{k}(g,z) \exp\,t\{X+[Y,X] +\frac{1}{2} [Y,[Y,X]]\} \exp (Y),
\end{eqnarray*}
where we have used the abbreviated notation $Y=Y(g,z)$.  By the Campbell-Hausdorff formula, 
and since $\tilde{K}^\mathbb C$ normalizes $P^+,$ equating the $\tilde{K}^\mathbb C$ parts of the two sides, we get
$$
\tilde{k}(g,z+tX) = \tilde{k}(g,z)\,\exp(t[Y,X]+O(t^2)).
$$
Applying $\tau$ to the inverse and taking $\frac{d}{dt}_{{\mid 0}}$ gives the first statement. Looking at the $P^-$ part of the decomposition
we get the second statement. 
\end{proof}
\begin{rem}\label{rem:1.8}
Equating the $P^+$- parts of the identity above we get 
$$
\exp g(z+tX) = \exp (gz + t \Ad(\tilde{k}(g,z)) X + O(t^2)),
$$
whence, slightly extending \cite[p. 65]{S}, for all $g\in \tilde{G},$ we have 
\begin{equation} \label{1.14}
Dg(z) = \Ad_{\mathfrak p^+}\tilde{k}(g,z).
\end{equation}
Further we note that by the general identity $DF=-F(DF^{-1})F$ we also know $D\tau(\tilde{k}(g,z)).$    Taking $\tau = \Ad_{\mathfrak p^+},$ and using \eqref{1.14},  the Lemma also gives an explicit expression for $D^2g(z).$
\end{rem}

\section{The main results about vector bundles}
For a more detailed description of the indecomposable Hermitizable representations $(\varrho, V)$ of $\mathfrak k^\mathbb C + \mathfrak p^-$ we have to make some normalizations. We already know that $\varrho$ determines a real number $\lambda;$ we also keep using the decomposition $V_0 \oplus \cdots \oplus V_m$ and the restrictions $\varrho^0,\,\varrho^0_i\, (1\leq i \leq m),\, \varrho^-$ of $\varrho$ as in Section 1.  We consider the set of all irreducible representations $(\alpha, W^\alpha)$ of $\mathfrak k_{\rm ss}^\mathbb C$ and choose a fixed $\tilde{K}$- invariant inner product $\langle \cdot , \cdot \rangle_\alpha$ on $W^\alpha;$ when $\alpha = \Ad^\prime_{\mathfrak p^-}$ we choose the restriction of $B_\nu$ to to $\mathfrak p^-.$ 


For any $\alpha$ the tensor product $\Ad^\prime_{\mathfrak p^-}\otimes \alpha$ is multiplicity free (cf. \cite[Corollary 4.4]{Jak}, or in a wider context \cite{JKR}). for every irreducible component $\beta$ of it we choose and fix an equivariant partial isometry $P_{\alpha \beta}: \mathfrak p^- \otimes W^\alpha \to W^\beta.$ For $Y\in \mathfrak p^-$ and $v\in W^\alpha,$ we define 
\begin{equation}
\tilde{\varrho}_{\alpha \beta} (Y) v = P_{\alpha \beta} (Y\otimes v).
\end{equation}
%

We start our closer study of the indecomposable Hermitizable representations $(\varrho, V)$ and the corresponding Hhhvb with the case where $\varrho$ is irreducible. Then $m=0$ (since each $\tilde{V}_j=V_j+ \cdots + V_m$ is always an invariant subspace). This implies $\varrho^-=0$ and hence $\varrho^0$ is irreducible, i.e., $\varrho^0 = \chi_\lambda \otimes \alpha$ with some $\alpha$ as above.  We may assume, without restriction of generality, that $V=W^\alpha$ as a vector space; the possible inner products are $\langle\cdot, \cdot \rangle_V = H \langle\cdot, \cdot \rangle_\alpha$ with some number $H> 0.$ We denote the corresponding Hhhvb by $E^{\alpha, \lambda}$. 

A little more generally, when we have a multiple of an irreducible $\varrho,$ i.e. $\varrho^-=0$ and $\varrho^0=\chi_\lambda I_\alpha \otimes \alpha$ on $V=\mathbb C^d \otimes W^\alpha$, the $\tilde{K}$ invariant inner products on $V$ are the tensor products of $\langle H \cdot, \cdot \rangle_{\mathbb C^d}$ on $\mathbb C^d$ and $\langle\cdot, \cdot \rangle_\alpha$ on $W^\alpha$ with some positive definite matrix $H$. 

(We might note that at this point all choices of $\mu$ still give isometrically isomorphic Hermitian representations and homogeneous holomorphic vector bundles) 

We will now study the case of indecomposable $(\varrho,V)$ such that $m$ is arbitrary and each summand $(\varrho_j^0,V_j)$ is irreducible. We say that such a $\varrho$ and the corresponding Hhhvb are \emph{filiform}. This case is the key to the general case. 

Now, $\varrho_j^0 = \chi_{\lambda -j}\otimes \alpha_j\,\, (0\leq j \leq m)$ and $V_j$ is $W^{\alpha_j}$ as a vector space with eventual inner product determined by a positive number $H_j$. We will use the abbreviations $W_j = W^{\alpha_j}, \, P_j = P_{\alpha_{j-1}, \alpha_j}, \, \tilde{\varrho}_j = \tilde{\varrho}_{\alpha_{j-1}, \alpha_j}$. So

\begin{equation} \label{2}
\tilde{\varrho}_j(Y)v=P_j(Y\otimes v)
\end{equation}
for $Y\in \mathfrak p^-,\,v\in V_{j-1}$. 
Clearly $\tilde{\varrho}_j$ satisfies \eqref{1.1concise}. This space of $\tilde{K}$ invariants is isomorphic with the space of $\tilde{K}$- equivariant maps $\mathfrak p^-\otimes V_{j-1} \to V_j,$  hence is $1$ dimensional. 
It follows that 
$$\varrho_j^-(Y) = y_j \tilde{\varrho}_j(Y)$$ 
for each $j$ with a number $y_j$. Also, $y_j\not = 0$ since indecomposability is part of the definition of filiform. Since the $\varrho_j^-\,\,(1 \leq j \leq m)$ 
together form  a representation of the Abelian Lie algebra $\mathfrak p^-$, we have   
\begin{equation}\label{2.2}
\tilde{\varrho}_{j+1}(Y^\prime) \tilde{\varrho}_j(Y) = \tilde{\varrho}_{j+1}(Y)\tilde{\varrho}_j(Y^\prime)
\end{equation}
for all $Y,Y^\prime\in \mathfrak p^-$ and $1\leq j \leq m-1.$
In terms of $P_j,$ this means
\begin{equation}\label{3}
P_{j+1}\big (Y^\prime \otimes P_j(Y\otimes v) \big )= P_{j+1}\big (Y \otimes P_j(Y^\prime\otimes v) \big )
\end{equation}
for all $Y,Y^\prime \in \mathfrak p^-$ and $v\in V_{j-1}.$
A third equivalent way to write this  condition is 
\begin{equation}\label{2.4new}
P_{j+1} \tilde{\varrho}_j(Y) = \tilde{\varrho}_{j+1}(Y) P_j.
\end{equation}
(Here on the left hand side $\tilde{\varrho}_{j}(Y)$ is really an abbreviation for $I\otimes\tilde{\varrho}_j(Y).$)

To summarize,  any sequence $\boldsymbol \alpha= (\alpha_0, \ldots , \alpha_m)$ such that $\alpha_j$ is contained in $\Ad_{\mathfrak p^-}\otimes\alpha_{j-1}$ and such that the $P_J$-s satisfy \eqref{3}, together with $\lambda \in \mathbb R$ and a sequence $y=(y_1, \ldots , y_n)$ of non-zero numbers determine a filiform Hermitizable representation.  Its possible Hermitian structures are given by sequences $H=(H_0, \ldots , H_m)$ of positive numbers.  There is  considerable redundancy here. (In fact, all choices of $y_j\not= 0 \,\, (\forall j)$  give isomorphic hhvb-s, while $y_j >0,\, H_j=1 \, (\forall j)$ is one possible normalization of the hhvb-s.) 
But this is not important at this point.  

We proceed towards Theorem \ref{2.4}, the main result about the filiform case. 
    

We denote by $\iota$ the identification of $(\mathfrak p^+)^*$ with $\mathfrak p^-$ under the Killing form, and for any vector space $W,$ extend it to a map from $\Hom(\mathfrak p^+, W)$ to $\mathfrak p^-\otimes W;$ that is, for $Y\in \mathfrak p^-, w\in W,$ 
$$
\iota(B(\cdot,Y)w) = Y\otimes w.
$$

For any $T\in {\rm Hom}(\mathfrak p^+,W)$ and  for all $k\in \tilde{K}^\mathbb C,$ the invariance of $B$ implies 
\begin{equation} \label{1.6old}
\iota\big (T\circ{\rm Ad}_{\mathfrak p^+}(k^{-1})\big ) = {\rm Ad}_{\mathfrak p^-}(k) \iota(T).
\end{equation}

Of course, linear transformations affecting only $W$ commute with $\iota.$
In particular, as in our later applications, if $W$ is some space of linear transformations $F_1 \to F_2$ and $U:F_2 \to F_3$ and $V:F_0\to F_1$ are fixed linear transformations, then 
\begin{equation}\label{2.6}
\iota(UTV) = U\iota(T) V,
\end{equation}

\begin{lem} \label{lem 2.1}
Let $\varrho$ be a filiform representation. Then there exist  
constants $u,w$  independent of $\lambda,$ such that for all $Y\in \mathfrak p^-$, we have
\begin{equation}\label{2.7}
P_j \iota \varrho^0_{j-1}([Y,\cdot]) =  c_j(\lambda) \tilde{\varrho}_j(Y),
\end{equation}
where 
\begin{equation}\label{2.8}
c_j(\lambda) = (u + (j-1)w - \tfrac{\lambda}{2n}).
\end{equation}
\end{lem}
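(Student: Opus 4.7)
By Schur's lemma, both sides of \eqref{2.7}, as functions of $Y$, will be $\tilde K$-equivariant elements of $\Hom\big(\mathfrak p^-, \Hom(V_{j-1}, V_j)\big)$. In the filiform setting this space is one-dimensional, by the multiplicity-freeness of $\Ad'_{\mathfrak p^-} \otimes \alpha_{j-1}$ cited above, and is spanned by $\tilde\varrho_j$. So the left-hand side of \eqref{2.7} will automatically be of the form $c_j(\lambda)\tilde\varrho_j(Y)$ for some scalar $c_j(\lambda)$, and the rest of the proof amounts to computing this scalar.

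My first step is to split $[Y, X] = a(Y, X)\hat z + T(Y, X)$ along the $B$-orthogonal decomposition $\mathfrak k^{\mathbb C} = \mathbb C\hat z \oplus \mathfrak k_{\rm ss}^{\mathbb C}$. Using invariance of $B$, the relation $[X, \hat z] = -iX$ and $B(\hat z, \hat z) = -2n$ (the latter from $\ad\hat z$ being $\pm i$ on $\mathfrak p^{\pm}$), a short computation gives $a(Y, X) = iB(Y, X)/(2n)$. Because $\varrho^0_{j-1}(\hat z) = i(\lambda - (j-1))I$, the contribution of the $\hat z$-piece, after applying $\iota$ (which by definition sends $B(\cdot, Y)v$ to $Y\otimes v$) and then $P_j$, will be exactly $-\tfrac{\lambda-(j-1)}{2n}\tilde\varrho_j(Y)v$. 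This produces the $-\lambda/(2n)$ term of $c_j(\lambda)$ together with an additional shift of $(j-1)/(2n)$.

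The $T$-piece I plan to handle via a Casimir computation. Picking an orthonormal basis $\{A_k\}$ of $\mathfrak k_{\rm ss}$ with respect to $-B$, and dual bases $\{X_i\} \subset \mathfrak p^+$, $\{Y^i\} \subset \mathfrak p^-$ under $B$, one expands $T(Y, X_i) = \sum_k B(Y, [A_k, X_i])A_k$ and uses the identity $\sum_i B(Y, [A_k, X_i])Y^i = -\ad(A_k)Y$ (an elementary consequence of invariance of $B$) to rewrite the $T$-contribution as $-\sum_k \tilde\varrho_j(\ad(A_k)Y)\varrho^0_{j-1}(A_k)v$. Two applications of the differentiated equivariance $\tilde\varrho_j(\ad(A)Y) = \varrho^0_j(A)\tilde\varrho_j(Y) - \tilde\varrho_j(Y)\varrho^0_{j-1}(A)$, together with the identification of $\sum_k\varrho^0_{j-1}(A_k)^2$, $\sum_k\varrho^0_j(A_k)^2$ and $\sum_k\ad(A_k)^2$ with the scalar actions of the $\mathfrak k_{\rm ss}$-Casimir on the irreducibles $V_{j-1}$, $V_j$ and $\mathfrak p^-$, will yield (after straightforward algebra)
\[
d_j := \tfrac{1}{2}\big(C(\alpha_j) - C(\alpha_{j-1}) - C(\Ad'_{\mathfrak p^-})\big)
\]
for the $T$-contribution, where $C(\cdot)$ denotes the relevant Casimir eigenvalue.

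Combining these pieces gives $c_j(\lambda) = \tfrac{j-1}{2n} - \tfrac{\lambda}{2n} + d_j$. The $\lambda$-dependence already matches the claim; the remaining and main obstacle is to see that the Casimir differences $C(\alpha_j) - C(\alpha_{j-1})$ are affine in $j$. This is forced by the commutativity \eqref{3} of the $P_j$, which selects the Cartan/PRV component of $\Ad'_{\mathfrak p^-} \otimes \alpha_{j-1}$ at each step, so the highest weights satisfy $\lambda_{\alpha_j} = \lambda_{\alpha_0} + j\mu$ for a fixed lattice element $\mu$ determined by $\varrho$. Since $C$ is quadratic in the highest weight, $d_j$ is then affine in $j$, and absorbing the constant and linear pieces into $u, w$ (independent of $\lambda$ and of $j$) gives $c_j(\lambda) = u + (j-1)w - \lambda/(2n)$ as asserted.
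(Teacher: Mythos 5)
The first two thirds of your argument are sound and run parallel to the paper's: the reduction via one-dimensionality of $\Hom\big(\mathfrak p^-,\Hom(V_{j-1},V_j)\big)^{\tilde K}$ (cf. \eqref{1.1concise}), the splitting of $[Y,\cdot]$ along $\mathbb C\hat z\oplus\mathfrak k_{\rm ss}^{\mathbb C}$ with the coefficient $\tfrac{i}{2n}B(Y,X)$, and the resulting term $-\tfrac{\lambda-j+1}{2n}\tilde\varrho_j(Y)$ are exactly what the paper does. Your Casimir evaluation of the $\mathfrak k_{\rm ss}$-piece, giving $d_j=\tfrac12\big(C(\alpha_j)-C(\alpha_{j-1})-C(\Ad^\prime_{\mathfrak p^-})\big)$, is a legitimate (indeed more explicit) alternative to the paper's appeal to equivariance, which only concludes that this piece is some $\lambda$-independent constant $c_j^\prime$.

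The genuine gap is the last step: the assertion that $d_j$ is affine in $j$. You justify it by claiming that the commutativity condition \eqref{3} ``selects the Cartan/PRV component'' at each step, so that the highest weights satisfy $\lambda_{\alpha_j}=\lambda_{\alpha_0}+j\mu$ for a fixed $\mu$. Nothing in the filiform definition gives this: \eqref{3} does not single out the Cartan (or PRV) constituent of $\Ad^\prime_{\mathfrak p^-}\otimes\alpha_{j-1}$ --- already for the ball in $\mathbb C^2$ the ``descending'' chains $\tau_\ell,\ldots,\tau_j$ are filiform and take the non-Cartan component at every step --- and when $\mathfrak k_{\rm ss}$ has higher rank the tensor product has many components, so one must actually rule out chains that change ``direction'' (e.g.\ a step by one weight $\mu$ followed by a step by a different weight $\mu^\prime$), which would destroy the quadratic behaviour of $C(\alpha_j)$. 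That exclusion is precisely the nontrivial content here, and your proposal assumes it rather than proves it. The paper closes exactly this point by a direct algebraic argument: it rewrites \eqref{2.7} with dual bases as \eqref{2.12}, chooses $Y,Y^\prime,Y^{\prime\!\prime}$ with $\tilde\varrho_{j+2}(Y^{\prime\!\prime})\tilde\varrho_{j+1}(Y^\prime)\tilde\varrho_j(Y)\neq0$ (possible by irreducibility of the $V_j$), and uses the representation property together with the commutation relation \eqref{2.2} to derive $c_{j+2}(\lambda)-c_{j+1}(\lambda)=c_{j+1}(\lambda)-c_j(\lambda)$, with no structural hypothesis on how the $\alpha_j$ sit inside the tensor products. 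To repair your proof you would either have to supply an argument of this kind, or independently prove that every filiform chain advances the highest weight by one and the same weight $\mu$ of $\mathfrak p^-$ at each step --- a claim the paper never needs and never establishes.
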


\begin{proof}
We have $\varrho^0_{j-1} = \chi_{\lambda-j+1} \otimes{\varrho_{j-1}^0}^\prime,$ with ${\varrho_{j-1}^0}^\prime$ trivial on $\mathfrak z$ (i.e. a representation of $\tilde{K}_{\rm ss}$).  Now
\begin{equation} \label{Lemma2.1(1)}
\varrho^0_{j-1}([Y,\cdot])=\chi_{\lambda-j+1}([Y,\cdot])\otimes I_{V_{j-1}} + {\varrho^0_{j-1}}^\prime([Y,\cdot]).
\end{equation}
The first term, evaluated on $X\in \mathfrak p^+,$ depends only on the projection of $[Y,X]$ onto $\mathfrak z^\mathbb C.$ This projection is equal to 
$$
\tfrac{B_\nu([Y,X],\hat{z})}{B_\nu(\hat{z},\hat{z})} \hat{z} = \tfrac{B([\hat{z}, Y], X)}{B(\hat{z},\hat{z})} \hat{z} = \tfrac{i}{2n} B(Y,X) \hat{z},
$$
where we have used  $\nu \hat{z} = \hat{z}$ and $B(\hat{z},\hat{z})=-2n.$
Hence 
$$\chi_{\lambda-j+1}([Y,\cdot]) = - \tfrac{\lambda - j + 1}{2n} B(Y,\cdot), $$ 
and so, applying $P_j\circ \iota$ to the first term on the right in  \eqref{Lemma2.1(1)} we obtain
\begin{equation}\label{2.10}
-\tfrac{\lambda-j+1}{2n} \tilde{\varrho}_j(Y).
\end{equation}
Next we apply $P_j\circ \iota$ to the second term in \eqref{Lemma2.1(1)}. We get an element of ${\rm Hom}(V_{j-1}, V_j)$ which because of \eqref{1.6old} and the equivariance of $P_j$ depends on $Y\in \mathfrak p^-$ in a $\tilde{K}$-equivariant way. But we already know that every equivariant map from $\mathfrak p^-$ to ${\rm Hom}(V_{j-1}, V_j)$ is a constant multiple of $\tilde{\varrho}_j.$ 
Putting this together with \eqref{2.10} we have \eqref{2.7} with 
\begin{equation}
c_j(\lambda) = c_j^\prime - \tfrac{\lambda}{2n},
\end{equation}
where $c_j^\prime$ is some constant independent of $\lambda.$

To prove \eqref{2.8} it will be enough to prove that $c_{j+1}(\lambda) - c_j(\lambda)$ is independent of $j,$ ($1 \leq j \leq m-1$). For this we give another expression for the left hand side of \eqref{2.7}. 
Let $\{e_\beta\}$ be a basis for $\mathfrak p^+$ and $\{e_{-\beta}^\prime\}$ the $B$-dual basis of $\mathfrak p^-.$ Expanding an arbitrary $X\in \mathfrak p^+$ in terms of the basis we have 
$$
\varrho^0_{j-1}([Y,X]) = \sum_\beta B(e_{-\beta}^\prime,X) \varrho_{j-1}^0([Y,e_\beta])
$$
and 
$$
\iota \varrho^0_{j-1}([Y,\cdot]) = \sum_\beta e_{-\beta}^\prime\otimes \varrho_{j-1}^0([Y,e_\beta])
$$
Applying $P_j$ to this, we can rewrite \eqref{2.7} as 
\begin{equation}\label{2.12}
c_j(\lambda) \tilde{\varrho}_j(Y)=\sum_\beta \tilde{\varrho}_{j}(e^\prime_{-\beta}) \varrho_{j-1}^0([Y,e_\beta])
\end{equation} 
We choose $Y, Y^\prime, Y^{\prime\!\prime}$ in $\mathfrak p^-$ such that 
\begin{equation}\label{2.13}
\tilde{\varrho}_{j+2}(Y^{\prime\!\prime}) \tilde{\varrho}_{j+1}(Y^\prime) \tilde{\varrho}_{j}(Y) \not = 0.
\end{equation} 
This is possible by the irreducibility of each $V_j.$ Now we write \eqref{2.12} with $j+1$ instead of $j,$ multiply on the right by $\tilde{\varrho}_j(Y^\prime),$ then use that $\varrho$ is a representation of $\mathfrak k^\mathbb C +\mathfrak p^-:$
\begin{align}\label{2.14}
c_{j+1}(\lambda) \tilde{\varrho}_{j+1}(Y) \tilde{\varrho}_{j}(Y^\prime) = \phantom{\quad \quad \quad \quad \quad \quad \quad Gadadhar Misra Adam Koranyi}& \nonumber \\
 \sum_\beta \tilde{\varrho}_{j+1}(e_{-\beta}^\prime) \tilde{\varrho}_{j}(Y^\prime) \varrho_{j-1}^0([Y,e_\beta]) + \sum_\beta \tilde{\varrho}_{j+1}(e_{-\beta}^\prime)
 \tilde{\varrho}_{j}([[Y,e_\beta],Y^\prime]).
\end{align}
We multiply \eqref{2.12} on the left by $\tilde{\varrho}_{j+1}(Y^\prime)$ and subtract it from \eqref{2.14}. Using \eqref{2.2} on both sides, we obtain
\begin{equation}\label{2.15}
\big ( c_{j+1}(\lambda)  - c_j(\lambda) \big ) \tilde{\varrho}_{j+1}(Y^\prime) \tilde{\varrho}_j(Y) = \sum_\beta \tilde{\varrho}_{j+1}(e^\prime_{-\beta}) \tilde{\varrho}_{j}\big ( [ [Y,e_\beta], Y^\prime ] \big ). 
\end{equation}
Now we write this with $j+1$ in place of $j,$ multiply on the right by $\tilde{\varrho}_j(Y^{\prime\!\prime}),$ and compare the resulting equality with \eqref{2.14} left multiplied by $\tilde{\varrho}_{j+2}(Y^{\prime\!\prime}).$ By \eqref{2.2}, the right hand sides are equal, and by \eqref{2.13} it follows that 
$$
c_{j+2}(\lambda) - c_{j+1}(\lambda)  = c_{j+1}(\lambda) - c_{j}(\lambda).  
$$
Since this holds for every $j,$ the proof is complete.
\end{proof}
%
%


\begin{lem} \label{lem 2.2}
Let $\varrho$ be a filiform representation.  For all $0 \leq j \leq m-1,$ and holomorphic $F:\mathcal D\to V_j$,
\begin{eqnarray*}
\lefteqn{P_{j+1} \iota D^{(z)} \big \{\varrho_j^0 (\tilde{k}(g,z)^{-1})F(g z) \,\big \}}\\
&=& -c_{j+1}(\lambda) \tilde{\varrho}_{j+1} \big (Y(g,z)\big )\big (\varrho_j^0 (\tilde{k}(g,z)^{-1})F(g z) \,\big) + \varrho_{j+1}^0 (\tilde{k}(g,z)^{-1}) \big ( (P_{j+1} \iota D F)(gz)\big ),\end{eqnarray*}
where $D^{(z)}$ denotes differentiation with respect to $z.$ 
\end{lem}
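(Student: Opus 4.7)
The plan is to apply the product rule to $D^{(z)}\bigl\{\varrho_j^0(\tilde k(g,z)^{-1})F(gz)\bigr\}$, interpret the two resulting terms as elements of $\mathrm{Hom}(\mathfrak p^+,V_j)$, push them through $\iota$ using \eqref{1.6old} and \eqref{2.6}, and finally apply $P_{j+1}$. The algebraic identities from Lemma \ref{lem 2.1} and the equivariance of $P_{j+1}$ are what convert the two terms into the two terms on the right-hand side.

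More precisely, for $X\in\mathfrak p^+$, Leibniz and Lemma \ref{lem 1.6} (with $\tau=\varrho_j^0$) together with the formula $D_X(gz)=\mathrm{Ad}_{\mathfrak p^+}\tilde k(g,z)\,X$ from Remark \ref{rem:1.8} give
\begin{align*}
D_X^{(z)}\bigl\{\varrho_j^0(\tilde k^{-1})F(gz)\bigr\}
&=-\varrho_j^0\bigl([Y(g,z),X]\bigr)\,\bigl(\varrho_j^0(\tilde k^{-1})F(gz)\bigr)\\
&\quad +\varrho_j^0(\tilde k^{-1})\,DF(gz)\,\mathrm{Ad}_{\mathfrak p^+}(\tilde k)\,X,
\end{align*}
where I abbreviate $\tilde k=\tilde k(g,z)$. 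Write $v=\varrho_j^0(\tilde k^{-1})F(gz)\in V_j$.

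For the first term, viewing $X\mapsto\varrho_j^0([Y(g,z),X])$ as an element of $\mathrm{Hom}(\mathfrak p^+,\mathrm{End}(V_j))$ and using \eqref{2.6} to pull the fixed vector $v$ past $\iota$, then applying $P_{j+1}$ and invoking Lemma \ref{lem 2.1} (with the index shift $j\mapsto j+1$), yields
\[
-P_{j+1}\,\iota\,\varrho_j^0\bigl([Y(g,z),\cdot\,]\bigr)\,v=-c_{j+1}(\lambda)\,\tilde\varrho_{j+1}\bigl(Y(g,z)\bigr)\,v,
\]
which is exactly the first term on the right of the claim.

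For the second term, treat $DF(gz)\in\mathrm{Hom}(\mathfrak p^+,V_j)$ and apply \eqref{1.6old} with $k=\tilde k^{-1}$ to get $\iota\bigl(DF(gz)\circ\mathrm{Ad}_{\mathfrak p^+}(\tilde k)\bigr)=\mathrm{Ad}_{\mathfrak p^-}(\tilde k^{-1})(\iota DF)(gz)$. Using \eqref{2.6} to move the fixed left factor $\varrho_j^0(\tilde k^{-1})$ past $\iota$ shows that the second term, after $\iota$, equals
\[
\bigl(\mathrm{Ad}_{\mathfrak p^-}(\tilde k^{-1})\otimes\varrho_j^0(\tilde k^{-1})\bigr)(\iota DF)(gz).
\]
Since $P_{j+1}\colon\mathfrak p^-\otimes V_j\to V_{j+1}$ is $\tilde K$-equivariant (and extends holomorphically to $\tilde K^{\mathbb C}$), applying $P_{j+1}$ and intertwining gives $\varrho_{j+1}^0(\tilde k^{-1})\,(P_{j+1}\iota DF)(gz)$, which is the second term of the claim.

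The only delicate point is the bookkeeping for the second term: one must apply \emph{both} \eqref{1.6old} (to trade $\mathrm{Ad}_{\mathfrak p^+}(\tilde k)$ acting on the source $\mathfrak p^+$ of $DF(gz)$ for $\mathrm{Ad}_{\mathfrak p^-}(\tilde k^{-1})$ acting on the $\mathfrak p^-$-factor after $\iota$) and \eqref{2.6} (to place $\varrho_j^0(\tilde k^{-1})$ on the $V_j$-factor), so that both tensor factors carry the action of the same group element $\tilde k(g,z)^{-1}$ and the equivariance of $P_{j+1}$ can be invoked to collapse them into $\varrho_{j+1}^0(\tilde k(g,z)^{-1})$. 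Once these identifications are made, the result is immediate from Lemma \ref{lem 2.1}.
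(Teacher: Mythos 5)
Your proposal is correct and follows essentially the same route as the paper: Leibniz rule, then Lemma \ref{lem 1.6} plus Lemma \ref{lem 2.1} (with the index shift) for the first term, and \eqref{1.14}, \eqref{1.6old}, \eqref{2.6} and the $\tilde{K}^{\mathbb C}$-equivariance of $P_{j+1}$ for the second; your explicit bookkeeping of how the two tensor factors both carry $\tilde{k}(g,z)^{-1}$ is exactly what the paper's terse proof leaves implicit.
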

\begin{proof}
Applying the Leibniz product rule on the left hand side we get 
$$
P_{j+1} \iota \big (D^{(z)}{\varrho}^0_j(\tilde{k}(g,z)^{-1}) \big ) F(g z) + P_{j} \iota {\varrho}_j^0(\tilde{k}(g,z)^{-1}) D^{(z)} \{F(g z)\}.
$$
To the first term we apply Lemma \ref{lem 1.6}, then Lemma \ref{lem 2.1} and obtain the first term in the assertion of the Lemma. The second term, by \eqref{1.14}, 
\eqref{1.6old} and  the equivariance of $P_{j+1}$ gives the second term in the assertion.
\end{proof}

\begin{lem} \label{lem 2.3}
Let $\varrho$ be a filiform representation.  For all $1 \leq j \leq m-1,$ with the constant $w$  of Lemma \ref{lem 2.1}, 
$$
P_{j+1}\iota D^{(z)} \tilde{\varrho}_j(Y(g,z)) = -\tfrac{w}{2} \tilde{\varrho}_{j+1} (Y(g,z))\tilde{\varrho}_j(Y(g,z)).
$$ 
\end{lem}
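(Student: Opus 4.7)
The plan is to combine the second formula of Lemma \ref{lem 1.6} with the auxiliary identity that appears midway through the proof of Lemma \ref{lem 2.1} (equation \eqref{2.15}). Writing $Y = Y(g,z)$ for brevity, the second statement of Lemma \ref{lem 1.6} gives $D_X Y = \tfrac{1}{2}[Y,[Y,X]]$ for $X\in\mathfrak p^+$. Since $[Y,[Y,X]]$ lies in $[\mathfrak p^-,\mathfrak k^\mathbb C]\subseteq \mathfrak p^-$ and $\tilde\varrho_j$ is linear, the chain rule yields
\[
D^{(z)}_X\,\tilde\varrho_j(Y(g,z)) \;=\; \tfrac12\,\tilde\varrho_j\!\big([Y,[Y,X]]\big).
\]

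Next, I would expand $\iota$ in the dual bases $\{e_\beta\}\subset\mathfrak p^+$, $\{e'_{-\beta}\}\subset\mathfrak p^-$ introduced in the proof of Lemma \ref{lem 2.1}. Regarding the above as an element of $\mathrm{Hom}(\mathfrak p^+,\mathrm{Hom}(V_{j-1},V_j))$ and applying $P_{j+1}\circ\iota$ (using \eqref{2.6} to pull $P_{j+1}$ past the scalars), one obtains
\[
P_{j+1}\iota\,D^{(z)}\tilde\varrho_j(Y(g,z)) \;=\; \tfrac12\sum_\beta \tilde\varrho_{j+1}(e'_{-\beta})\,\tilde\varrho_j\!\big([Y,[Y,e_\beta]]\big).
\]

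The crux is then to recognize this sum. Specialising \eqref{2.15} from the proof of Lemma \ref{lem 2.1} to $Y'=Y$ and using $c_{j+1}(\lambda)-c_j(\lambda)=w$ from \eqref{2.8} gives
\[
w\,\tilde\varrho_{j+1}(Y)\tilde\varrho_j(Y) \;=\; \sum_\beta \tilde\varrho_{j+1}(e'_{-\beta})\,\tilde\varrho_j\!\big([[Y,e_\beta],Y]\big).
\]
By antisymmetry, $[[Y,e_\beta],Y] = -[Y,[Y,e_\beta]]$, and substituting this into the previous display yields exactly $-\tfrac{w}{2}\tilde\varrho_{j+1}(Y)\tilde\varrho_j(Y)$, which is the desired identity. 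I do not anticipate any real obstacle: once the derivative is taken via Lemma \ref{lem 1.6} and expanded in a basis, the only non-routine step is spotting that the resulting double-bracket sum is (up to sign) precisely the one already computed in the proof of Lemma \ref{lem 2.1}.
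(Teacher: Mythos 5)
Your proof is correct and follows essentially the same route as the paper: both start from the second formula of Lemma \ref{lem 1.6} applied to $\tilde\varrho_j(Y(g,z))$ and then reduce the resulting double bracket to the difference $c_{j+1}(\lambda)-c_j(\lambda)=w$ coming from \eqref{2.8}. The only (harmless) difference is that you invoke the intermediate identity \eqref{2.15} with $Y'=Y$, whereas the paper re-expands $\tilde\varrho_j([Y,[Y,\cdot]])$ via the representation property and applies Lemma \ref{lem 2.1} directly at levels $j$ and $j+1$ — the same computation accessed at a different point.
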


\begin{proof}
We abreviate $Y=Y(g,z)$. Using Lemma  \ref{lem 1.6}, the linearity of $\tilde{\varrho}_j$, and that $\tilde{\varrho}_j(Y)$ is the restriction to $V_{j-1}$ of the representation $\varrho_{(1,\ldots,1)}$ of $\mathfrak k^\mathbb C + \mathfrak p^-$ 
we find 
\begin{eqnarray*}
\lefteqn{P_{j+1}\iota D^{(z)} \tilde{\varrho}_j(Y) = \frac{1}{2} P_{j+1}\iota \tilde{\varrho}_j([Y[Y,\cdot]])}\\
 &=& \frac{1}{2}P_{j+1}\iota  \tilde{\varrho}_j(Y) \varrho_{j-1}^0([Y,\cdot]) - \frac{1}{2} P_{j+1}\iota  \varrho_{j+1}^0([Y,\cdot]) \tilde{\varrho}_{j}(Y).
\end{eqnarray*}
Now $\iota$ commutes with $\tilde{\varrho}_j(Y)$, and from \eqref{2.4} we have $P_{j+1} \tilde{\varrho}_j=  \tilde{\varrho}_{j+1} P_j.$  Using this and Lemma \ref{lem 2.1}, we get that the first term equals
$$
\tfrac{1}{2}c_j(\lambda) \tilde{\varrho}_{j+1}(Y) \tilde{\varrho}_j(Y).
$$
For the second term, Lemma \ref{lem 2.1} immediately gives 
$$
-\frac{1}{2}c_{j+1}(\lambda)  \tilde{\varrho}_{j+1}(Y) \tilde{\varrho}_j(Y).
$$ 
The statement now follows from \eqref{2.8}.
\end{proof} 

For an indecomposable filiform Hhhvb $E^\varrho$ as described above, we will use the notation $E^{y,\lambda}$. Writing $0=(0,\ldots,0),$ $E^0$ makes sense, it is the direct sum of the irreducible  factor bundles in the composition series of $E^y.$

We denote by $U^y$ resp. $U^0$ the $\tilde{G}$-action on the sections of $E^y$ and $E^0$ defined by \eqref{1.13}; we observe that in the case of $U^0$ the second factor in \eqref{1.13-} is identically the identity.

If $f\in \Hol(\mathcal D, V),$ we write $f_j$ for the component of $f$ in $V_j,$ that is, the projection of $f$ onto $V_j.$ We continue using the notations introduced up to this point.

\begin{thm} \label{2.4}
Let $\varrho$ be a filiform representation of $\mathfrak k^\mathbb C + \mathfrak p^+,$ and $E^y$  the holomorphically induced vector bundle. Suppose that $\lambda\in \mathbb  R$ is regular in the sense that  
\begin{align*}
c_{ij}&= \tfrac{2^{i-j}}{(i-j)!} \prod_{k=1}^{i-j} \big (c_{j+1}(\lambda) + c_{j+k}(\lambda) \big )^{-1}\\
&=\tfrac{1}{(i-j)!} \prod_{k=1}^{i-j} \big \{ u + (j+\tfrac{k-1}{2})w -  \tfrac{\lambda}{2n} \big \}^{-1}
\end{align*}
is meaningful for $0\leq j <  i \leq m.$
Then the operator $\Gamma: \Hol(\mathcal D, V) \to \Hol(\mathcal D, V)$ given by 
$$
(\Gamma f_j)_\ell  = \begin{cases}  c_{\ell j}\, y_\ell \cdots y_{j+1} (P_\ell \iota D)  \cdots (P_{j+1} \iota D) f_j & \mbox{\!\!\!\rm if~} \ell > j,\\
f_j \:\mbox{~\rm if~} \ell = j, \\
0 \:\:\:\mbox{~\rm if~}\ell< j&
\end{cases}$$
intertwines the actions $U^0$ and $U^y$ of $\tilde{G}$ on the trivialized sections of $E^0$ and $E^y$.
\end{thm}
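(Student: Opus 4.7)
By linearity the intertwining $\Gamma\circ U_g^0=U_g^y\circ\Gamma$ need only be checked on a function $f_j\in\Hol(\mathcal D,V_j)$ concentrated in a single component, and further by comparing $V_\ell$-components for $j\le\ell\le m$. Abbreviate $\tilde k=\tilde k(g^{-1},z)$, $Y=Y(g^{-1},z)$, and $\mathcal O_{ij}:=(P_i\iota D)\cdots(P_{j+1}\iota D)$, with $\mathcal O_{jj}=I$. Since $\mathfrak p^-$ is abelian and $\varrho^-|_{V_{s-1}}=y_s\tilde\varrho_s$, the expansion $\varrho^-(\exp Y)^{-1}=\exp(-\varrho^-(Y))$ shows that the $V_\ell$-component of $\varrho(\tilde b(g^{-1},z))^{-1}$ restricted to $V_k$ (for $j\le k\le\ell$) is the single monomial $\tfrac{(-1)^{\ell-k}}{(\ell-k)!}\,y_\ell\cdots y_{k+1}\,\tilde\varrho_\ell(Y)\cdots\tilde\varrho_{k+1}(Y)\,\varrho_k^0(\tilde k^{-1})$. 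Cancelling the common factor $y_\ell\cdots y_{j+1}$ on the two sides, the theorem reduces to the identity
\begin{equation*}
c_{\ell j}\,\mathcal O_{\ell j}\bigl[\varrho_j^0(\tilde k^{-1})f_j(g^{-1}z)\bigr] = \sum_{k=j}^{\ell}\frac{(-1)^{\ell-k}}{(\ell-k)!}\,c_{kj}\,\tilde\varrho_\ell(Y)\cdots\tilde\varrho_{k+1}(Y)\,G_k(z),
\end{equation*}
where $G_k(z):=\varrho_k^0(\tilde k^{-1})\bigl(\mathcal O_{kj}f_j\bigr)(g^{-1}z)$.

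The plan is to prove this identity by induction on $\ell-j$ (the base case $\ell=j$ is immediate, both sides equalling $G_j$). Writing the inductive hypothesis as $\mathcal O_{ij}[\,\cdot\,]=\sum_k B_{ik}\,\tilde\varrho_i(Y)\cdots\tilde\varrho_{k+1}(Y)\,G_k$, the inductive step applies $P_{i+1}\iota D$ to each summand via the Leibniz rule. In each of the $i-k$ branches where $D$ lands on a $\tilde\varrho_s(Y)$-factor, the outer $\tilde\varrho$'s get pushed past $P_{i+1}$ by repeated use of \eqref{2.4new} so that Lemma \ref{lem 2.3} evaluates $P_{s+1}\iota D\tilde\varrho_s(Y)=-\tfrac w2\tilde\varrho_{s+1}(Y)\tilde\varrho_s(Y)$; the resulting $\tilde\varrho$-strings are then reordered via the symmetry \eqref{2.2}, producing a contribution of $-\tfrac w2\,\tilde\varrho_{i+1}(Y)\cdots\tilde\varrho_{k+1}(Y)G_k$ from each such branch. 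In the one remaining branch (where $D$ lands on $G_k$), \eqref{2.4new} brings things into the form of Lemma \ref{lem 2.2}, giving $P_{k+1}\iota DG_k=-c_{k+1}(\lambda)\tilde\varrho_{k+1}(Y)G_k+G_{k+1}$. Collecting contributions and using the linearity of $c_s(\lambda)$ in $s$ (slope $w$) to combine $c_{k+1}(\lambda)+\tfrac{w(i-k)}2=\tfrac{c_{k+1}(\lambda)+c_{i+1}(\lambda)}2$ yields the key identity
\begin{equation*}
P_{i+1}\iota D\bigl[\tilde\varrho_i(Y)\cdots\tilde\varrho_{k+1}(Y)G_k\bigr] = -\tfrac{c_{k+1}(\lambda)+c_{i+1}(\lambda)}{2}\tilde\varrho_{i+1}(Y)\cdots\tilde\varrho_{k+1}(Y)G_k + \tilde\varrho_{i+1}(Y)\cdots\tilde\varrho_{k+2}(Y)G_{k+1},
\end{equation*}
which gives the recursion $B_{i+1,k}=-\tfrac{c_{k+1}(\lambda)+c_{i+1}(\lambda)}{2}B_{ik}+B_{i,k-1}$ (with $B_{i,j-1}=0$).

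To close the induction, I would verify by direct substitution that the conjectured coefficients $B_{ik}=\tfrac{(-1)^{i-k}}{(i-k)!}\tfrac{c_{kj}}{c_{ij}}$ solve this recursion. Using the explicit product form of $c_{ij}$ from the theorem statement to clear denominators, this check reduces to the elementary linear identity
\begin{equation*}
(i+1-j)\bigl(c_{j+1}(\lambda)+c_{i+1}(\lambda)\bigr) = (i+1-k)\bigl(c_{k+1}(\lambda)+c_{i+1}(\lambda)\bigr) + (k-j)\bigl(c_{j+1}(\lambda)+c_k(\lambda)\bigr),
\end{equation*}
which is immediate from $c_a(\lambda)-c_b(\lambda)=(a-b)w$.

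The hardest part will be the bookkeeping in the inductive step: systematically lining up $P$-indices with $\tilde\varrho$-indices so that Lemma \ref{lem 2.3} can be invoked (which forces repeated use of \eqref{2.4new} to reduce to the matched-index case, and of \eqref{2.2} to reassemble the $\tilde\varrho$-strings into a canonical order), and then checking that the $w$-contributions from Lemma \ref{lem 2.3} combine with the $c_{k+1}(\lambda)$-contribution from Lemma \ref{lem 2.2} precisely into the symmetric coefficient $\tfrac12(c_{k+1}(\lambda)+c_{i+1}(\lambda))$; the specific product form of the $c_{ij}$ is then exactly what makes this recursion explicitly soluble.
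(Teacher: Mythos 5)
Your proposal is correct and follows essentially the same route as the paper's own proof: the same reduction of the intertwining relation to a component identity for a single $f_j$, the same induction on $\ell-j$ using Lemmas \ref{lem 2.2} and \ref{lem 2.3} together with \eqref{2.4new} and \eqref{2.2}, and the same final coefficient verification resting on the linearity of $c_j(\lambda)$ in $j$. The only difference is cosmetic: you normalize the coefficients by $c_{ij}$ inside the induction, while the paper carries them unnormalized and checks the equivalent identity $\tfrac{\ell-i+1}{2}c_{i,j}(c_{i+1}(\lambda)+c_{\ell+1}(\lambda))+c_{i-1,j}=\tfrac{c_{\ell,j}}{c_{\ell+1,j}}c_{ij}$.
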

\begin{proof}
It is helpful to think of $f$ as a (column) vector with entries $f_j$ and of $\Gamma$ as a lower triangular matrix. 

We must show that $\Gamma$ intertwines the actions of $\tilde{G}$ via the multipliers $\varrho^0(\tilde{k}(g,z))$ respectively $\varrho^0(\tilde{k}(g,z)) \varrho^-(\exp Y(g,z)).$ The first multiplier acts diagonally.  For the second multiplier, we observe that $\varrho^-(Y)$ acts by a subdiagonal matrix
$$
\varrho^-(Y)_{j,k} = \delta_{j-1,k} y_j \tilde{\varrho}_j(Y).
$$ 
Hence, by exponentiation, $\varrho^-(\exp Y(g,z))$ is lower triangular and for $i\geq j,$
\begin{align}\label{i}
\varrho^-(\exp Y(g,z))_{i,j} &= \exp \big (\varrho^-(Y(g,z))\big )_{i,j}\nonumber\\
&=\tfrac{1}{(i-j)!} y_i \cdots y_{j+1} \tilde{\varrho}_i \big (Y(g,z)\big ) \cdots \tilde{\varrho}_{j+1}\big (Y(g,z) \big ).
\end{align}
The intertwining property to be proved is 
\begin{equation}\label{ii}
\Gamma \big (\varrho^0(\tilde{k}(g,z)^{-1})f(g z)\big ) = \varrho^-(\exp -Y(g,z)) \varrho^0(\tilde{k}(g,z)^{-1})(\Gamma f)(gz)
\end{equation}
We set $f=f_j$ (thinking of $f$ as a ``vector'' whose only non-zero component is the $j^{\rm th}$ one) and write the $\ell^{\rm th}$ component of the left hand side, for $\ell \geq j,$
\begin{equation}
c_{\ell,j} y_\ell \cdots y_{j+1} (P_\ell\iota D) \cdots (P_{j+1} \iota D) \Big (\varrho_j^0(\tilde{k}(g,z)^{-1}) (f_j\circ g) (z) \Big ). 
\end{equation}
Using the abbreviation 
$$
F^{(i)} = (P_i\iota D) \cdots (P_{j+1} \iota D) f_j,
$$ 
the corresponding component on the right hand side of \eqref{ii} is  
\begin{equation}
\sum_i \tfrac{(-1)^{\ell-i}}{(\ell -i)!} y_\ell \cdots y_{i+1} \tilde{\varrho}_\ell(Y(g,z))\cdots \tilde{\varrho}_{i+1}(Y(g,z)) c_{i,j} y_i \cdots y_{j+1} \varrho^0_i(\tilde{k}(g ,z)^{-1} ) F^{(i)}(gz)
\end{equation}
with the terms being non-zero only for $i\geq j.$ So, verifying \eqref{ii} amounts to verifying 
\begin{multline}\label{v}
c_{\ell,j}(P_\ell\iota D) \cdots (P_{j+1} \iota D) \Big (\varrho_j^0(k(g,z)^{-1}) f_j(gz) \Big ) \\
= \sum_{i=j}^\ell \tfrac{(-1)^{\ell-i}}{(\ell -i)!} c_{i j}\tilde{\varrho}_\ell(Y(g,z))\cdots \tilde{\varrho}_{i+1}(Y(g,z))\varrho^0_i(\tilde{k}(g,z)^{-1}) F^{(i)}(gz)
\end{multline}
for all $\ell \geq j.$ 
We prove \eqref{v} by induction on $\ell \geq j.$ For $\ell = j$ the identity is trivial. To pass from $\ell$ to $\ell + 1,$ we have to show that applying $(P_{\ell+1} \iota D)$ to the right hand side we get $\tfrac{c_{\ell,j}}{c_{\ell+1,j}}$ times the analogous expression with $\ell+1$ in place of $\ell.$ Using the product rule for $\iota D,$  in a first step, we get
\begin{multline*} 
\sum_i \tfrac{(-1)^{\ell-i}}{(\ell -i)!} c_{i,j}P_{\ell+1} \Big \{ \sum_{k=i+1}^\ell \tilde{\varrho}_\ell(Y(g,z)) \cdots (\iota D \tilde{\varrho}_k(Y(g,z))) \cdots \tilde{\varrho}_{i+1}(Y(g,z)) \varrho^0_i(\tilde{k}(g,z)^{-1}) F^{(i)}(gz) +\\  \tilde{\varrho}_\ell(Y(g,z)) \cdots \tilde{\varrho}_{i+1}(Y(g,z)) \iota D\big ( \varrho^0_i(\tilde{k}(g,z)^{-1}) F^{(i)}(gz) \big )\Big \}
\end{multline*}
Repeated application of \eqref{2.4} moves 
$P_{\ell+1}$ forward to give $P_{k+1}\iota D(\tilde{\varrho}_k(Y(g,z)))$ in the terms of the sum over $k$ and $P_{i+1} \iota D$ in the last factor of the  last term. At this point Lemmas \ref{lem 2.2} and \ref{lem 2.3} can be applied and give, after collecting like terms,   
\begin{multline*}
\sum_i \tfrac{(-1)^{\ell-i}}{(\ell -i)!} c_{i,j} \Big \{ -\tfrac{1}{2} (c_{i+1}(\lambda) + c_{\ell+1} (\lambda) ) \tilde{\varrho}_{\ell+1}(Y(g,z))\cdots \tilde{\varrho}_{i+1}(Y(g,z)) \varrho^0_i(\tilde{k}(g,z)^{-1}) F^{(i)}(gz)   \\
+ \tilde{\varrho}_{\ell+1}(Y(g,z))\cdots \tilde{\varrho}_{i+2}(Y(g,z))\varrho^0_{i+1}(\tilde{k}(g,z)^{-1}) F^{(i+1)}(gz)
\Big \}.
\end{multline*}
This splits naturally into two sums.  In the first sum, we slightly rewrite the coefficient in front, in the second sum, we change the summation index $i$ to $i-1,$ and obtain
\begin{multline*}
\tfrac{1}{2} \sum_{i=j}^\ell \tfrac{(-1)^{\ell-i+1}}{(\ell -i+1)!} (\ell -i+1) c_{i,j} (c_{i+1}(\lambda) + c_{\ell+1} (\lambda) )
\big ( \tilde{\varrho}_{\ell+1}(Y(g,z))\cdots \tilde{\varrho}_{i+1}(Y(g,z)) \varrho^0_i(\tilde{k}(g,z)^{-1} \big ) F^{(i)}(gz)\\
+ \sum_{i=j+1}^{\ell+1} \tfrac{(-1)^{\ell-i+1}}{(\ell -i+1)!} c_{i-1,j} \tilde{\varrho}_{\ell+1}(Y(g,z))\cdots \tilde{\varrho}_{i+1}(Y(g,z))\varrho^0_i(\tilde{k}(g,z)^{-1}) F^{(i)}(gz).
\end{multline*}
This can be written as a single sum over $i$ from $j$ to $\ell+1.$ (The two extra terms at the ends are $0$ since we may set $c_{j-1,j}=0.$) This sum will be $\tfrac{c_{\ell,j}}{c_{\ell+1,j}}$ times the $(\ell+1)$- analogous term of the right hand side of \eqref{v}, i.e. our induction will be complete if all corresponding coefficients agree, i.e. if 
$$
\tfrac{\ell-i+1}{2}c_{i,j}(c_{i+1}(\lambda) + c_{\ell+1} (\lambda) ) + c_{i-1,j} = \tfrac{c_{\ell,j}}{c_{\ell+1,j}}c_{ij}
$$
for all $0\leq i \leq j\leq \ell.$ One can easily verify that these identities follow from \eqref{2.8} finishing the proof.
\end{proof} 
To pass to more general indecomposable Hermitizable $(\varrho,V),$ it is useful to first consider the ``filiform with multiplicities" case, where for $0\leq j \leq m,$ 
$$
V_j = \mathbb C^{d_j}\otimes W_j,\,\, \varrho_j^0 = \chi_{\lambda-j}I_{d_j}\otimes \alpha_j
$$  
with irreducible representations $(\alpha_j,W_j)$ of $\tilde{K}_{\rm ss}.$ Now $\oplus W_j$ is filiform, and $\tilde{\varrho}_j(Y):W_{j-1} \to W_j$ is defined as before. Since $\varrho^-_j$ has to satisfy \eqref{1.1concise}, it follows by the same argument as before that  
$$
\varrho_j^-(Y) = y_j\otimes\tilde{\varrho}_j(Y) \:\:\: (Y \in \mathfrak p^-)
$$
with some linear transformation $y_j\in \Hom(\mathbb C^{d_{j-1}}, \mathbb C^{d_j}).$ We are using here the natural identification $\Hom(\mathbb C^{d_{j-1}}, \mathbb C^{d_j})\otimes \Hom(W_{j-1}, W_j) = \Hom(\mathbb C^{d_{j-1}} \otimes W_{j-1}, \mathbb C^{d_j}\otimes W_j)$. 

Hence the formula \eqref{i} remains correct after putting a $\otimes$ symbol after the product $y_i \cdots y_{j+1}.$  We define $\Gamma$ as in Theorem \ref{2.4}, again putting the $\otimes$ symbol after the $y$- product. The intertwining property of $\Gamma$ follows as before from \eqref{v}, in which the $y_k$-s play no role. So, the analogue of Theorem \ref{thm 2.5n} holds.

The possible Hermitian structures, as indicated at the beginning of this section, are given by positive definite linear transformations $H_j$ on $\mathbb C^{d_j},\,(0\leq j \leq m)$.

Now we consider the most  general indecomposable Hermitizable $(\varrho,V).$ Here for each $0\leq j \leq m$ there is a set $A_j$ of inequivalent irreducible representations of $\tilde{K}_{\rm ss}$ such that 
\begin{align*}
V_j &=\oplus_{\alpha\in A_j}V_j^\alpha, & \varrho_j^0 &=\oplus_{\alpha\in A_j}\varrho_j^{0\alpha}\\
V_j^\alpha &= \mathbb C^{d_{j \alpha}} \otimes W^\alpha, & \varrho_j^{0\alpha} &= \chi_{\lambda -j} I_{d_{j \alpha}}\otimes \alpha. 
\end{align*}
The possible inner products on $V^\alpha_j$ are given by  positive definite linear transformations $H_j^\alpha.$ 

We call a sequence $(\alpha_j,\ldots, \alpha_i)$ with $\alpha_k$ in $A_k$ $(j \leq k \leq i)$ \emph{admissible} if each $\alpha_k$ is contained in $\Ad^\prime_{\mathfrak p^-} \otimes \alpha_{k-1}$ for $j+1 \leq k \leq i.$ When a two term sequence $(\alpha, \beta)$ is admissible, we have the equivariant map $P_{\alpha, \beta}:\mathfrak p^-\otimes W^\alpha \to W^\beta$ and $\tilde{\varrho}_{\alpha \beta}(Y)$ for $Y\in \mathfrak p^-$ as in \eqref{2}.  As in the ``filiform with multiplicity" case, it follows that 
$$
{\varrho}^-_j(Y) = \oplus\big ( y_j^{\alpha\beta}\otimes\tilde{\varrho}_{\alpha\beta}(Y)\big )
$$
with some $y_j^{\alpha\beta}$ in $\Hom(\mathbb C^{d_{j\alpha}}, \mathbb C^{d_{j\beta}}),$ the direct sum taken over all admissible pairs $(\alpha,\beta)$ in $A_{j-1}\times A_j.$ 

Knowing the set $y=\{y_j^{\alpha\beta}\}$ implies knowing the sets $A_j$ and the multiplicities $d_{j \alpha }$. So, our general irreducible Hhhvb is determined by $y$ and $\lambda$; we may denote it by $E^{y,\lambda}$ or just $E^y$ when $\lambda$ is taken for granted. Changing all the $y$ -s in $E^y$ to $0$ we get a Hhhvb $E^0$ which is the direct sum of the factors in a composition series of $E^y$:
\begin{align} 
E^0 &= \oplus_{j=0}^m E_j \label{eqn:2.22}\\
E_j &= \oplus_{\alpha \in A_j} d_{j\alpha} E^{\alpha, \lambda-j}. \label{eqn:2.23}
\end{align} 
(By $dE$ we mean the direct sum of $d$ copies of $E$.)

From \eqref{2.2}, it follows that 
\begin{equation}\label{2.21}
y_{j+1}^{\beta\gamma}y_j^{\alpha\beta} = 0
\end{equation}
unless \eqref{3} is satisfied for $P_{\alpha\beta}$ and $P_{\gamma\beta}$ in place of $P_j$ and $P_{j+1}.$ If \eqref{3} is satisfied, we say that $(\alpha, \beta, \gamma)$ is a \emph{filiform sequence}; we call an admissible sequence 
$\boldsymbol \alpha=(\alpha_j, \ldots, \alpha_i)$ $(\alpha_k \in A_k)$ of any length filiform if it has only two terms or if every three term part of it is filiform. This is  equivalent to saying that $W^{\boldsymbol \alpha} = W^{\alpha_j}\oplus \cdots \oplus W^{\alpha_i}$ with $\mathfrak p^-$ acting via $\tilde{\varrho}_{\alpha_j \alpha_{j+1}} \oplus \cdots \oplus  \tilde{\varrho}_{\alpha_{i-1} \alpha_{i}}$ is a filiform representation. 

For $W^{\boldsymbol \alpha},$ Lemma \ref{lem 2.1} holds and defines the numbers $c_\ell^{\boldsymbol \alpha}(\lambda)$ (which depend only on $\alpha_{\ell - 1}$ and $\alpha_\ell,$ not on other terms of $\boldsymbol \alpha$), 
$u^{\boldsymbol \alpha},$ $w^{\boldsymbol \alpha}.$  Then we define 
\begin{equation}\label{2.22}
c_{ij}^{\boldsymbol \alpha} = \tfrac{2^{i-j}}{(i-j)!} \prod_{k=1}^{i-j} \big (c_{j+1}^{\boldsymbol \alpha}(\lambda) + c_{j+k}^{\boldsymbol \alpha}(\lambda)\big )^{-1}
\end{equation}
for all $\lambda \in \mathbb R$ that are regular for $\boldsymbol \alpha,$ in the sense that the right hand side is meaningful.

We introduce some abbreviations. For a filiform ${\boldsymbol \alpha}=(\alpha_j,\ldots, \alpha_i)$ we write 
\begin{align*}
y^{\boldsymbol \alpha}&= y_i^{\alpha_{i-1},\alpha_i}\cdots y_{j+1}^{\alpha_j,\alpha_{j+1}}\\
\varrho^{\boldsymbol \alpha}(Y)&= \tilde{\varrho}_i^{\alpha_{i-1},\alpha_i}(Y)\cdots \tilde{\varrho}_{j+1}^{\alpha_j,\alpha_{j+1}}(Y)\\
D^{\boldsymbol \alpha}&=(P_{\alpha_{i-1},\alpha_i}\iota D) \cdots (P_{\alpha_j,\alpha_{j+1}}\iota D)
\end{align*} 
Furthermore, for $\alpha \in A_j$, $\beta \in A_i$, ($j < i)$,   we denote by $A_{ji}(\alpha,\beta)$ the set of all filiform sequences (associated to $\varrho$) $\boldsymbol \alpha = (\alpha_j,\ldots,\alpha_i)$ such that $\alpha_j=\alpha, \alpha_i=\beta.$


For $f\in \Hol(\mathcal D, V),$  we write $f_j^\alpha$ for its projection onto $V_j^\alpha.$  

\begin{thm}\label{thm 2.5n}
Let $(\varrho,V)$ be indecomposable Hermitian and let $E^\varrho=E^y$ be the corresponding Hhhvb. Suppose that $\lambda\in \mathbb R$ is regular for every ${\boldsymbol \alpha}$ occurring in $\varrho.$  Then the operator $\Gamma^{y,\lambda}: \Hol(\mathcal D, V) \to \Hol(\mathcal D, V)$ given by 
$$
(\Gamma^{y,\lambda} f^\alpha_j)_\ell^\beta  = \begin{cases} \sum_{\boldsymbol \alpha\in A_{j\ell}(\alpha,\beta)}  c_{\ell j}^{\boldsymbol \alpha}\, y^{\boldsymbol \alpha} \otimes D^{\boldsymbol \alpha} f_j^{\alpha} & \mbox{\!\!\!\rm if~} \ell > j,\\
f_j^{\alpha} \:\mbox{~\rm if~} \ell = j, ~\mbox{\rm and}~\beta =\alpha\\
0 \:\:\:\mbox{\rm ~otherwise~}&
\end{cases}$$
intertwines the actions of $\tilde{G}$ on the trivialized sections of $E^0$ and $E^\varrho$.
\end{thm}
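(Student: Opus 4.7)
The plan is to reduce the identity to be proved to the filiform case already handled in Theorem \ref{2.4} (together with the ``filiform with multiplicities'' variant discussed immediately after). The reduction proceeds by isolating the contribution of each filiform sequence $\boldsymbol\alpha \in A_{j\ell}(\alpha,\beta)$ on both sides of the intertwining identity.

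\emph{Step 1.} I would first compute $\varrho^-(\exp Y(g,z))$ explicitly. Since $\varrho^-(Y)$ is block subdiagonal with $(j,j-1)$ block equal to $\oplus_{(\alpha,\beta)} y_j^{\alpha\beta}\otimes \tilde\varrho_{\alpha\beta}(Y)$, exponentiation yields, for $i > j$, a $(V_i^\beta,V_j^\alpha)$ block that is a sum of products
\[
\tfrac{1}{(i-j)!}\sum_{\boldsymbol\alpha}\, y^{\boldsymbol\alpha}\otimes \varrho^{\boldsymbol\alpha}\bigl(Y(g,z)\bigr),
\]
where $\boldsymbol\alpha$ ranges over admissible sequences $(\alpha_j,\dots,\alpha_i)$ with $\alpha_j=\alpha$, $\alpha_i=\beta$. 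By \eqref{2.21}, each summand vanishes unless every three consecutive terms form a filiform triple, so only $\boldsymbol\alpha\in A_{j\ell}(\alpha,\beta)$ survive.

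\emph{Step 2.} With this expansion in hand, the intertwining identity \eqref{ii} applied to a section supported in $V_j^\alpha$ decomposes componentwise: the $V_\ell^\beta$ component reads as a sum, indexed by filiform sequences $\boldsymbol\alpha \in A_{j\ell}(\alpha,\beta)$, of expressions formally identical to both sides of \eqref{v} from the proof of Theorem \ref{2.4}, but with $\tilde\varrho_k$, $P_k$, $y_k$ replaced by their $\boldsymbol\alpha$-labelled counterparts $\tilde\varrho_{\alpha_{k-1}\alpha_k}$, $P_{\alpha_{k-1}\alpha_k}$, $y_k^{\alpha_{k-1}\alpha_k}$. Because the commutation relations \eqref{2.2}--\eqref{2.4new} hold within each filiform sub-chain (that is precisely the filiform condition), Lemmas \ref{lem 2.1}, \ref{lem 2.2}, \ref{lem 2.3} apply to $W^{\boldsymbol\alpha}$ with constants $c_k^{\boldsymbol\alpha}(\lambda)$ depending only on $\alpha_{k-1}$ and $\alpha_k$.

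\emph{Step 3.} For each fixed $\boldsymbol\alpha$, the scalar recurrence on the coefficients $c_{ij}^{\boldsymbol\alpha}$ in \eqref{2.22} is identical to the one in Theorem \ref{2.4}, and the induction on $\ell$ carried out there goes through verbatim, with the $y$-factors pulled outside via $\otimes$ as in the ``filiform with multiplicities'' remark. Summing over $\boldsymbol\alpha \in A_{j\ell}(\alpha,\beta)$ then yields the $V_\ell^\beta$-component of the required identity. Since every would-be cross-term between distinct or non-filiform chains is killed by \eqref{2.21}, no further interaction occurs.

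The main obstacle I anticipate is the bookkeeping in Step 1: verifying that the expansion of $\exp\varrho^-(Y(g,z))$ organizes cleanly into the filiform-sequence sum, and that the Leibniz-rule computation in the induction step of Theorem \ref{2.4} commutes with the direct-sum decomposition over $\boldsymbol\alpha$ (in particular, that the application of $P_{\alpha_\ell\alpha_{\ell+1}}\iota D$ in the extension step interacts only with the terms labelled by its own chain). Once this combinatorial decoupling is established, the rest is a direct appeal to the already proved filiform case, applied chain by chain.
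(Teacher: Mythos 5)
Your proposal is correct and follows essentially the same route as the paper: expand $\varrho^-(\exp -Y(g,z))$ blockwise, use \eqref{2.21} to discard all non-filiform chains and regroup the resulting triple sum as a sum over $\boldsymbol\alpha\in A_{j\ell}(\alpha,\beta)$, and then invoke the already-established filiform identity \eqref{v} (with the $y$-factors riding along as tensor factors, as in the multiplicity case) chain by chain. The only simplification you could make is that the induction of Theorem \ref{2.4} need not be re-run at all — since each $W^{\boldsymbol\alpha}$ is itself a filiform representation, \eqref{v} applies verbatim to each chain, which disposes of the "combinatorial decoupling" worry in your last paragraph.
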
 
\begin{proof}
We have to prove \eqref{ii} in our more general situation. $\varrho^0(\tilde{k}(g,z)^{-1})$ acts diagonally by $\chi_{\lambda-j} I \otimes \alpha_j$ on each $V_j^\alpha.$ For the other multiplier, we use \eqref{i} and get, for  the $V_i^\gamma$- component of the image of any $v_j \in V_j^\alpha$, $(j < i)$
$$
\big ( \varrho^-(\exp-Y(g,z)) v_j^\alpha\big )_i^\gamma = \tfrac{(-1)^{i-j}}{(i-j)!} \sum_{{\boldsymbol \alpha}\in A_{ji}(\alpha,\gamma)}  y^{\boldsymbol \alpha} \otimes \varrho^{\boldsymbol \alpha}(Y(g,z)) v_j^\alpha.
$$
We write down the $V^\beta_\ell$- component of the left hand side of \eqref{ii} applied to $f_j^\alpha,$ for $\ell > j:$ 
\begin{equation}\label{2.23}
\sum_{{\boldsymbol \alpha}\in A_{j\ell}(\alpha,\beta)}c_{\ell j}^{\boldsymbol \alpha}\, y^{\boldsymbol \alpha} \otimes D^{\boldsymbol \alpha}\big (\chi_{\lambda-j} I \otimes \alpha_j (\tilde{k}(g,z)^{-1}) f^\alpha_j(g z)\big )
\end{equation}
and the  corresponding right hand side:
\begin{equation} \label{2.24}
\sum_{i=j}^\ell \tfrac{(-1)^{\ell -i}}{(\ell - i)!} \sum_{\gamma\in A_i}\Big ( \sum_{\boldsymbol \sigma \in A_{i \ell}(\gamma,\beta)}   y^{\boldsymbol \sigma}\otimes \varrho^{\boldsymbol \sigma} (Y(g,z))\,\,\Big ) \Big (\,\chi_{\lambda-i} I \otimes \gamma  (\tilde{k}(g,z)^{-1}) \sum_{\boldsymbol \tau \in A_{j i}(\alpha, \gamma)} c_{ij}^{\boldsymbol \tau} y^{\boldsymbol \tau}\otimes (D^{\boldsymbol \tau}f_j^\alpha)(gz) \,\,\Big )
\end{equation}
By \eqref{2.21} we have $y^{\boldsymbol \sigma}y^{\boldsymbol \tau}=0$ unless the sequence $({\boldsymbol \tau},{\boldsymbol \sigma})$ (i.e $\boldsymbol \sigma$ following ${\boldsymbol \tau}$) is filiform. The triple sum gives then all sequences in $A_{j\ell}(\alpha,\beta)$ exactly once. 
$\gamma$  is $\alpha_i$ and $\boldsymbol{\tau, \sigma}$ are the parts of $\boldsymbol \alpha$ up to resp. beyond $\alpha_i.$ So \eqref{2.23} is equal to 
\begin{equation}\label{2.26}
\sum_{\boldsymbol \alpha \in A_{j \ell}(\alpha, \beta)} \sum_{i=j}^\ell \tfrac{(-1)^{\ell -i}}{(\ell -i )!} c_{i j}^{\boldsymbol \alpha} y^{\boldsymbol \alpha}  \otimes \varrho^{\boldsymbol \sigma}(Y(g,z)) (\chi_{\lambda-i}\otimes \alpha_i)(\tilde{k}(g,z)^{-1}) (D^{\boldsymbol \tau} f_j^\alpha)(g z).   
\end{equation}
By \eqref{v}, which was proved in Theorem \ref{2.4}, the terms of \eqref{2.26} for each $\boldsymbol \alpha$ agree with the corresponding term in \eqref{2.23}, finishing the proof.  

\end{proof}
\begin{rem}
\begin{enumerate}
\item When looking for examples of Hermitian representations of $\mathfrak k^\mathbb C+\mathfrak p^-$ one finds a large class by taking representations of the simple Lie algebra $\mathfrak g^\mathbb C$ and restricting them to  
$\mathfrak k^\mathbb C+\mathfrak p^-.$ By an unpublished result of R. Parthasarathy \cite{RP} taking sub-quotients of such (not necessarily irreducible) representations and tensoring with one dimensional representations one gets all possible indecomposable Hermitian representations. The proof (originally given for $K^\mathbb C P^-$) uses the Borel-Weil theorem.
\item A  filiform representation can be constructed by taking an arbitrary irreducible $(\varrho^0,V_0)$ and defining inductively $(\varrho_j^0,V_j)$ as the irreducible piece of $\mathfrak p^-\otimes V_{j-1}$ 
whose highest weight is the sum of 
 the highest weights of $\Ad_{\mathfrak p^-},$ resp. $\varrho_{j-1}^0 $ (the ``Cartan product"). To see that \eqref{3} holds in this case, we note that $\mathfrak p^-\otimes \mathfrak p^-\otimes V_{j-1}$ now contains $V_{j+1},$ whose highest weight is $2\beta+\Lambda$   with multiplicity $1.$  So the two sides of \eqref{3}, which are images of $Y\otimes Y^\prime\otimes v$ under $\tilde{K}$- equivariant maps must coincide up to constant. The case $Y=Y^\prime$ shows that the constant is $1.$ 
\item In the case where $\mathcal D$ is the one variable disc, we have 
$\tilde{K}_{\rm ss}=\{1\}$ and its only representation is the trivial one. In this case Theorem \ref{thm 2.5n} reduces to \cite[Theorem 3.1]{KM}.  
\item When $\mathcal D$ is the unit ball in $\mathbb C^2,\,\tilde{K}_{\rm ss}$ is ${\rm SU}(2)$ whose irreducible representations we denote by $\tau_0, \tau_1, \ldots$ (with $\dim \tau_{k} = k+1$). We have $\Ad_{\mathfrak p^-}^\prime\cong \tau_1,$ and by the Clebsch-Gordan formula $\Ad^\prime_{\mathfrak p^-}\otimes\tau_k = \tau_{k-1} \oplus \tau_{k+1}.$ The construction in ($2$) together with a $\chi_\lambda$  gives the filiform sequences $\tau_j, \ldots , \tau_\ell$ (for any fixed $(0\leq j \leq \ell)$) and the corresponding filliform representations. The contragradients of these namely $\tau_\ell, \ldots , \tau_j$ are also filiform.  There are no others since one can easily show  that $(\tau_k,\tau_{k\pm 1}, \tau_k)$ is never a filiform sequence. 
\item In general, the irreducible Hhhvb-s $E^{y,\lambda}$, $E^{y^\prime, \lambda}$ are isomorphic if there exists a family of invertible linear transformations $\{a_j^\alpha\}$ such that ${y^{\prime}}^{\alpha\beta} = (a_j^\beta)^{-1} y_j^{\alpha\beta} a_{j-1}^\alpha$. Given also the Hermitian structures $H$ (resp. $H^\prime$), they are isomorphic if, in addition, $H_j^{\prime \alpha} = (a_j^\alpha)^\star H_j^\alpha a_j^\alpha$.    
\end{enumerate}
\end{rem}

\section{Hilbert spaces of sections}

Some relatively simple known facts about vector-valued reproducing kernel spaces are fundamental for this section. We start by listing these in the exact form we will need them.  They are not difficult to prove in the order given.  Most of the statements can be found, for instance in \cite[Chapter I]{N}, although with rather different notations. 

We consider (complete) Hilbert spaces $\mathcal H\subseteq \mathfrak F(\D,V),$ where $V$ is  a finite dimensional Hilbert space and $\mathfrak F(\D,V)$ is the set all $V$ - valued functions on a set $\D.$ The inner product on $\mathcal H$ is denoted by $(\cdot | \cdot),$ on $V$ by $\langle \cdot , \cdot \rangle.$  The adjoint of an element $A$ in $\Hom(V,V)$
(and more generally, of a linear transformation between two finite dimensional Hilbert spaces) is denoted $A^\#,$ while $*$ is used in the case of infinite dimensions, e.g. for $\mathcal H.$   

For $v\in V,$  we define $v^\#$ in the linear dual of $V$ by $\langle \cdot, v\rangle.$ (This is actually the adjoint if we identify $v$ with the map $z \to z v$ in $\Hom(\mathbb C, V).$) We have $v^\# A^\# = (A v)^\#$ for $A$ as above. 

If $K(z,w)$ is a ``kernel'', i.e., a $\Hom(V,V)$ - valued function of $z$ and $w$ in $\D,$ we write, for any $v\in V,$ 
\begin{eqnarray}
K_w(z)=K(z,w),\\ \big ( K_w v\big )(z) = K_w(z)v.
\end{eqnarray}
Given $\mathcal H\subseteq \mathfrak F(\D,V),$ we say $K$ (or $K(z,w)$) is a reproducing kernel for $\mathcal H$ if $K_wv \in \mathcal H$ for all $w$ and $v,$ and if 
\begin{equation}
(f | K_wv) = \langle f(w),v\rangle
\end{equation}
for all $f\in \mathcal H.$ (It is obvious that $K$ is unique and that it exists if and only if the ``evaluation maps'' $\rm{ev}_w f = f(w)$ from $\mathcal H$ to $V$ are continuous for all $w.$ As a linear map  $V\to \mathcal H,\,\, K_w$ is just the adjoint $\rm{ev}_w^*.$)  The reproducing kernel is positive definite, denoted $K\succ 0,$ in the sense that 
$$
\sum_{j,k} \langle K(z_j,z_k)v_k, v_j\rangle \geq 0
$$
for any $z_1,\ldots ,z_N$ in $\D$ and $v_1, \ldots, v_N$ in $V.$ In particular, this implies $K(z,w)^\# = K(w,z).$ For any two kernels we write $K_0 \prec K_1$ if $K_1 - K_0 \succ 0.$  

We mention that if $\{e_\nu\}$ is any orthonormal basis for $\mathcal H$ and $K$ is the reproducing kernel, then 
\begin{equation} \label{K(3.4)}
K(z,w) = \sum_\nu e_\nu(z) e_\nu(w)^\#
\end{equation}
the sum being convergent both in $\Hom(V,V)$ and also in $\mathcal H$ when it is applied to a $v\in V,$ and regarded as a function of $z$ with $w$ fixed. 

Suppose $T:\mathcal H \to \mathfrak F(\D,V)$ is a linear transformation. Then $TK_w$ (for the reproducing kernel $K,$ or any other kernel $K_w$ such that $K_wv$ is in $\mathcal H$ for all $w,v$)  is naturally defined by 
$$
(TK_w) v = T (K_w v)\,\, (v\in V).
$$
Depending on  the context, we will also use the notation $T^{(z)}K(z,w)$ and $T_1K(z,w)$ for $\big (TK_w\big )(z)$ to indicate that the operator is applied to $K(z,w)$  as a function of $z,$ i.e. the first variable, with $w$ held fixed. 

For every $f\in \mathcal H$ we define $f^\#$ by $f^\#(z) = f(z)^\#$  $(z\in \D).$ For a linear transformation $T$ of $\mathcal H$ we define $T^\#$ by 
\begin{equation}
T^\# f^\# = \big ( Tf\big )^\#.
\end{equation}
Now ${T^{(w)}}^\#K(z,w)$ makes sense, by 
\begin{equation}\label{eq:3.6}
{T^{(w)}}^\# K(z,w) = {T^{(w)}}^\# K(w,z)^\# = \big ( T^{(w)} K(w,z) \big )^\#.
\end{equation}

Note that if $A$ is a $\Hom(V,V)$ - valued function on $\D,$ and $T_A$ on $\mathcal H$ is defined by $\big (T_Af\big ) (z) = A(z) f(z)$ ( a kind of multiplication operator), then 
$$\big (T_A^\#\big )^{(w)} K(z,w) = K(z,w) A(w)^\#,$$
and using a natural abbreviation, 
$$T_AT_A^\# K = A K A^\#. $$
For easier reference we give numbers to the following  statements.
\begin{prop}\label{prop:3.1}
Suppose that $\mathcal H\subseteq \mathfrak F(\D,V)$ has reproducing kernel $K,$ that $V_1$ is another finite dimensional Hilbert space and $T:\mathcal H \to \mathfrak F(\D,V_1)$ is a linear map such that $f\mapsto (Tf)(z)$ is bounded for every $z\in \D.$ Then $\ker T$ is closed and the range $T\mathcal H$  with the Hilbert space structure of $\mathcal H/\ker T$ transferred to it via $T$ has $T^{(z)} {T^{(w)}}^\#K(z,w)$ as its reproducing kernel. 
\end{prop}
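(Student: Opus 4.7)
The plan is to verify the three claims of the proposition in order: closedness of $\ker T$, existence of the transferred Hilbert structure on $T\mathcal H$ together with continuous point evaluations, and identification of the reproducing kernel.

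For the first, since $f\mapsto (Tf)(z)$ is bounded from $\mathcal H$ to $V_1$ for every $z\in\mathcal D$, each of these maps has closed kernel; writing $\ker T = \bigcap_{z\in\mathcal D} \ker(\mathrm{ev}_z\circ T)$ exhibits $\ker T$ as an intersection of closed subspaces, hence closed. The quotient $\mathcal H/\ker T$ then inherits a Hilbert structure, and $T$ descends to a bijection onto $T\mathcal H$ which we declare to be an isometry. Boundedness of evaluation on $T\mathcal H$ is automatic: writing $g=Tf$ with $f\in(\ker T)^\perp$ gives $\|g(z)\|\le C_z\|f\|_{\mathcal H}=C_z\|g\|_{T\mathcal H}$, so the reproducing kernel $K^T$ of $T\mathcal H$ exists by the standard criterion.

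To identify $K^T$, introduce for each $w\in\mathcal D$ the Hilbert-space adjoint $S_w:V_1\to\mathcal H$ of the bounded map $\mathrm{ev}_w\circ T$, characterised by $(f\,|\,S_wv_1)_{\mathcal H}=\langle (Tf)(w),v_1\rangle$ for all $f\in\mathcal H$ and $v_1\in V_1$. Plugging in $f\in\ker T$ shows $S_wv_1\in(\ker T)^\perp$; consequently, for $g=Tf$ with $f\in(\ker T)^\perp$,
$$
\langle g(w),v_1\rangle = (f\,|\,S_wv_1)_{\mathcal H} = (Tf\,|\,T S_wv_1)_{T\mathcal H} = (g\,|\,T S_wv_1)_{T\mathcal H},
$$
which forces $K^T_w v_1 = T S_w v_1$ in $T\mathcal H$.

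Finally, compute $S_wv_1$ as a function on $\mathcal D$ by testing against $K_zu$ for $u\in V$ and using the reproducing property of $K$ together with conjugate-linearity of the inner product: this yields $\langle u,(S_wv_1)(z)\rangle = \langle T^{(w)}K(w,z)u,v_1\rangle=\langle u,(T^{(w)}K(w,z))^\# v_1\rangle$, whence $(S_wv_1)(z)=(T^{(w)})^\# K(z,w)\,v_1$ by the definition \eqref{eq:3.6}. Applying $T$ in the $z$-variable and unfolding the notational convention $T^{(z)}$ then gives
$$
K^T(z,w)v_1 \;=\; (TS_wv_1)(z) \;=\; T^{(z)}(T^{(w)})^\# K(z,w)\,v_1
$$
for every $v_1\in V_1$, which is the desired formula. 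The only step that requires care is keeping the two distinct adjoints straight — the finite-dimensional ${}^\#$ on $\Hom(V,V_1)$ and the Hilbert-space adjoint defining $S_w$ — while threading through the conjugate-linearity of the inner product; once $S_wv_1$ is identified via test-functions of the form $K_zu$, everything else is bookkeeping.
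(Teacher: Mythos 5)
Your argument is correct and complete, but it follows a different route from the one the paper indicates. The paper disposes of Proposition \ref{prop:3.1} with the single remark that it ``can be proved e.g. from \eqref{K(3.4)}'': one takes an orthonormal basis $\{e_\nu\}$ of $(\ker T)^\perp$, completes it by a basis of $\ker T$, observes that $\{Te_\nu\}$ is an orthonormal basis of $T\mathcal H$ in the transferred structure, and then applies $T^{(z)}{T^{(w)}}^\#$ termwise to the expansion $K(z,w)=\sum_\nu e_\nu(z)e_\nu(w)^\#$, the terms coming from $\ker T$ dropping out. Your proof instead works directly with the pointwise adjoints $S_w=(\mathrm{ev}_w\circ T)^*$: closedness of $\ker T$ as $\bigcap_z\ker(\mathrm{ev}_z\circ T)$, boundedness of evaluations on $T\mathcal H$ via the quotient norm, the observation $S_wv_1\in(\ker T)^\perp$ so that $K^T_wv_1=TS_wv_1$ by uniqueness of the reproducing kernel, and the identification $(S_wv_1)(z)={T^{(w)}}^\#K(z,w)v_1$ by testing against $K_zu$ -- your bookkeeping with the two adjoints and \eqref{eq:3.6} is right, and the conjugate-linearity is handled consistently with the paper's convention $(f\,|\,K_wv)=\langle f(w),v\rangle$. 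What your approach buys is that it sidesteps the convergence questions implicit in manipulating the infinite sum \eqref{K(3.4)} term by term (only pointwise boundedness of $f\mapsto(Tf)(z)$ is ever used), and it exhibits $K^T_wv_1$ intrinsically as $T(\mathrm{ev}_w\circ T)^*v_1$; the basis-expansion route, on the other hand, is shorter once \eqref{K(3.4)} is available and makes the formula visually transparent. Either way the statement is established, so your proposal stands as a valid self-contained substitute for the paper's sketch.
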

This can be proved e.g. from \eqref{K(3.4)}. 
\begin{prop}\label{prop:3.2}
Suppose that in addition to the hypothesis of Proposition \ref{prop:3.1} there is given a Hilbert space $\mathcal H_1\subseteq \mathfrak F(\D,V_1)$ with reproducing kernel $K_1.$ Then $T$ maps $\mathcal H$ into $\mathcal H_1$ and is bounded by $c>0$ if and only if    
\begin{equation}
T^{(z)} {T^{(w)}}^\# K(z,w) \prec c^2 K_1(z,w).
\end{equation}
\end{prop}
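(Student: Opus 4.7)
The plan is to use Proposition~\ref{prop:3.1} to identify the reproducing kernel of $T\mathcal H$ (with the quotient norm) as $K_T(z,w) := T^{(z)}\,{T^{(w)}}^\# K(z,w)$, and then to recognize the assertion as an instance of the standard comparison principle for positive definite kernels.

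For the forward direction, the hypothesis $\|T\|\leq c$ gives the adjoint $T^*\colon \mathcal H_1\to \mathcal H$ with $\|T^*\|\leq c$. The key computational step is to verify the identity
\[
\big(TT^*(K_1)_w v\big)(z)= K_T(z,w)\,v.
\]
I would establish this in two substeps: first, identify $T^*(K_1)_w v\in\mathcal H$ as the function $z\mapsto {T^{(w)}}^\# K(z,w)\,v$, by combining the defining relation $(T^*h\,|\,f)_\mathcal H=(h\,|\,Tf)_{\mathcal H_1}$ with the reproducing property of $K$ and the convention \eqref{eq:3.6}; second, apply $T$ termwise to pick up the $T^{(z)}$ factor. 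Specializing to $h=\sum_{j}(K_1)_{z_j}v_j\in\mathcal H_1$ and rewriting $\|T^*h\|_\mathcal H^2=(TT^*h\,|\,h)_{\mathcal H_1}$ via the reproducing property then turns $\|T^*h\|^2\leq c^2\|h\|^2$ into
\[
\sum_{j,k}\big\langle[c^2K_1(z_j,z_k)-K_T(z_j,z_k)]v_k,v_j\big\rangle\geq 0,
\]
which is precisely $K_T\prec c^2 K_1$.

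For the reverse direction, assume $K_T\prec c^2 K_1$. A quick rescaling shows that $\mathcal H(c^2K_1)=\mathcal H_1$ as sets with $\|g\|_{\mathcal H(c^2 K_1)}^2=\|g\|_{\mathcal H_1}^2/c^2$. By Proposition~\ref{prop:3.1}, $T\mathcal H$ with the quotient norm is an RKHS with kernel $K_T$, so the standard Aronszajn inclusion principle (a dominating positive definite kernel yields a larger RKHS with smaller norm on the subspace) gives $T\mathcal H\subseteq\mathcal H_1$ with $\|g\|_{\mathcal H_1}\leq c\,\|g\|_{T\mathcal H}$ for every $g\in T\mathcal H$. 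Combined with the obvious quotient estimate $\|Tf\|_{T\mathcal H}\leq \|f\|_\mathcal H$, this yields $\|Tf\|_{\mathcal H_1}\leq c\|f\|_\mathcal H$ for every $f\in\mathcal H$. The main technical obstacle is the adjoint computation $TT^*(K_1)_w v=K_T(\cdot,w)v$; once that formula is pinned down the rest is routine bookkeeping with inner products. If the Aronszajn inclusion step is not to be taken as standard, one can prove it directly by observing that $c^2 K_1-K_T$, being positive definite, is itself a reproducing kernel, and invoking the sum decomposition for RKHS.
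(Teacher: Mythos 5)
Your argument is correct, and it is essentially the route the paper intends: the paper gives no details beyond the remark that "a proof can be based on the preceding proposition," and your proof does exactly that, using Proposition \ref{prop:3.1} to identify $T^{(z)}{T^{(w)}}^\#K(z,w)$ as the reproducing kernel of $T\mathcal H$ with the quotient norm, together with the standard adjoint computation $TT^\#(K_1)_w v = K_T(\cdot,w)v$ and the Aronszajn domination/inclusion principle for kernels. Both directions are sound as written (the identification of $T^\#(K_1)_wv$ follows from the convention \eqref{eq:3.6} and the boundedness of $f\mapsto (Tf)(w)$), so no gaps remain.
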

A proof can be based  on the preceding proposition. 

An immediate consequence of Proposition \ref{prop:3.1} is the following well known fact. 
\begin{rem} 
If $G$ is a group of transformations of $\D$ and and $m(g,z)$ is a $\Hom(V,V)$ - valued multiplier (i.e. $m(gg^\prime, z) = m(g,g^\prime z) m(g^\prime, z)$ for all $g,g^\prime, z.$) and $\mathcal H$ has reproducing kernel $K,$ then $U_g$ defined by 
\begin{equation}\label{eq:3.8}
\big (U_gf\big )(z) = m(g^{-1}, z)^{-1} f(g^{-1}z)
\end{equation}
preserves $\mathcal H$ and is a unitary representation on it if and only if  $K$ is quasi-invariant, i.e. 
$$
K(gz,gw) = m(g,z)K(z,w)m(g,w)^\#
$$
for all $g,z,w.$   
\end{rem}
\begin{prop}\label{prop:3.3}
If $\D$ is a domain $\mathbb C^n$ and $\mathfrak F(\D,V)$ is changed in the statements to $\Hol(\D,V),$ the holomorphic $V$ - valued functions, and if $T$ is a holomorphic differential operator, then the hypothesis of Propositions \ref{prop:3.1} and \ref{prop:3.2} about the boundedness of $f \mapsto Tf (z)$ are  automatically satisfied.   
\end{prop}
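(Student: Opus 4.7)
The plan is to reduce the claim to the well-known fact that in a reproducing kernel Hilbert space of holomorphic functions, not only the evaluations $f\mapsto f(z)$ but also the evaluations of all partial derivatives $f\mapsto (\partial^\alpha f)(z)$ are bounded linear functionals. Granting this, a holomorphic differential operator $T$ is a finite sum $T=\sum_{|\alpha|\le N}A_\alpha(\cdot)\,\partial^\alpha$ with holomorphic $\Hom(V,V_1)$-valued coefficients $A_\alpha$, and $(Tf)(z)=\sum_\alpha A_\alpha(z)(\partial^\alpha f)(z)$ is then a finite linear combination of bounded maps $\mathcal H\to V_1$, hence bounded.

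First I would record that for $\mathcal H\subseteq \Hol(\D,V)$ with reproducing kernel $K$ one has the pointwise bound
$$
\|f(w)\|_V \;\le\; \|K_w\|_{V\to \mathcal H}\,\|f\|_{\mathcal H}, \qquad \|K_w\|_{V\to \mathcal H}^{\,2}=\|K(w,w)\|_{\Hom(V,V)}.
$$
Moreover the scalar function $w\mapsto \|K(w,w)\|$ is continuous on $\D$: since $K_w\in\Hol(\D,V)$ and $K(w,z)=K(z,w)^{\#}$, the kernel $K(z,w)$ is holomorphic in $z$ and antiholomorphic in $w$ (this can also be read off from \eqref{K(3.4)}). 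Consequently, for every compact $L\subset\D$ the quantity $M_L:=\sup_{w\in L}\|K_w\|$ is finite.

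Next, fixing $z_0\in\D$, I would choose a closed polydisc $\overline\Delta$ of polyradius $r=(r_1,\ldots,r_n)$ centered at $z_0$ with $\overline\Delta\subset\D$, and let $\Gamma\subset\overline\Delta$ denote its distinguished boundary, an $n$-torus. Applying Cauchy's integral formula componentwise yields, for every multi-index $\alpha\in\mathbb N^n$,
$$
(\partial^\alpha f)(z_0)=\frac{\alpha!}{(2\pi i)^n}\int_{\Gamma}\frac{f(w)}{(w-z_0)^{\alpha+\mathbf{1}}}\,dw,
$$
and therefore the estimate
$$
\|(\partial^\alpha f)(z_0)\|_V \;\le\; \frac{\alpha!}{r^{\alpha}}\sup_{w\in \Gamma}\|f(w)\|_V \;\le\; \frac{\alpha!}{r^{\alpha}}\,M_{\Gamma}\,\|f\|_{\mathcal H}.
$$
Thus $f\mapsto (\partial^\alpha f)(z_0)$ is continuous from $\mathcal H$ to $V$, and the argument above then closes the proof.

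There is no serious obstacle. The only step that deserves a word of care is the local boundedness of $w\mapsto\|K_w\|$, which rests on the (joint) continuity of the reproducing kernel in the holomorphic setting; once that is in hand, Cauchy's formula mechanically converts pointwise control of $f$ on the torus $\Gamma$ into pointwise control of $(\partial^\alpha f)(z_0)$, and the differential operator $T$ is handled by linearity in its finitely many terms.
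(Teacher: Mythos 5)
Your argument is correct and is exactly the route the paper takes: the paper's proof is the single remark that the statement ``follows from the Cauchy estimates,'' and your write-up simply fills in the details (local boundedness of $w\mapsto\|K(w,w)\|$ from the kernel, the Cauchy integral formula on a polydisc to control $(\partial^\alpha f)(z_0)$ by $\sup_{\Gamma}\|f\|\le M_\Gamma\|f\|_{\mathcal H}$, and linearity over the finitely many terms of $T$). Nothing further is needed.
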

This follows from the Cauchy estimates. 

We turn to the main subject of this section.  Given an indecomposable $E^\varrho=E^y$ as in Theorem \ref{thm 2.5n}, a \emph{regular unitary structure} on it is  
a Hilbert space $\mathcal H \subseteq \Hol(\mathcal D, V)$ with inner product invariant under $U^\varrho$ and containing the space $\mathscr P = \mathscr P(\mathfrak p^+, V)$ of all $V$ - valued polynomials. If such a structure exists, we say that $E^\varrho$ is \emph{regularly unitarizable}. 

Our first goal is to describe all regular unitary Hhhvb-s and all regularly unitary structures on them. But first we reformulate this definition in an intrinsic trivialization independent way. For this, and also for later  use,  
we recall the following facts of representation theory.

Given a continuous representation $U$ of $\tilde{G}$ on a topological vector space with some minimal good properties, the $\tilde{K}$-finite vectors, i.e. those $f$ for which $\{U_kf\mid k \in \tilde{K}\}$ span a finite dimensional space, form a dense subspace. On this subspace, $U$ induces a representation $\mathfrak u$ of $\mathfrak g$ defined by $\mathfrak u_X f = \left .\tfrac{d}{dt}\right \vert_{0} U_{\exp tX} f.$
So the $\tilde{K}$-finite vectors form  a $(\mathfrak g, \tilde{K})$ - module, i.e. a joint representation of $\mathfrak g$ and $\tilde{K}.$ (cf. \cite[Proposition  2.5]{V}).


A regular unitary structure 
can be intrinsically defined as 
a Hilbert space of holomorphic sections with inner product invariant under the action of $\tilde{G}$, such that it contains all $\tilde{K}$-finite sections.  It is equivalent to the definition first given since it is not hard to see that in the canonical trivialization the $\tilde{K}$-finite vectors are exactly the polynomials \cite[Proposition XII.2.1]{N}. These remarks also make it clear that the condition $\mathscr P \subseteq \mathcal H$ can be equivalently replaced by ``$\mathcal H$ dense in $\Hol(\mathcal D, V)$'' (in the topology of uniform convergence on compact subsets of $\mathcal D$). 

We will need the following non-trivial fact (cf. \cite[Theorem 2.12]{V}).  If $U$ is a unitary representation of $\tilde{G}$ on a Hilbert space $\mathcal H,$ and the $\tilde{K}$-finite subspace $\mathcal H_{\tilde{K}}$ is given the induced $(\mathfrak g, \tilde{K})$ - module structure, then the  $(\mathfrak g, \tilde{K})$ - sub-modules of  $\mathcal H_{\tilde{K}}$ under closure in $\mathcal H$ are in one to one correspondence with the $U$ - invariant subspaces of $\mathcal H.$ 

One important consequence of this is that if $E^\varrho$ is regularly unitarizable,  
then automatically $\mathscr P$ is dense in $\mathcal H.$

In the case of irreducible $\varrho,$ i.e. when $\varrho^0 = \chi_\lambda\otimes \alpha$ with an irreducible representation $\alpha$ of $K^\mathbb C_{\rm ss}$ and $\varrho^-=0,$ the situation is very well known, it is part of  the theory of the holomorphic discrete series of representations. For every $\alpha,$ there is a set $\mathcal W_c(\alpha)$ of the form $\lambda < \lambda_\alpha$ with $\lambda_\alpha$ explicitly known such that $E^{\alpha, \lambda}$ is regularly unitarizable if and only if $\lambda \in \mathcal W_c(\alpha)$ (cf. \cite{EHW, Jak1}). In such a case $\mathscr P$ is an irreducible $(\mathfrak{g}, \tilde{K})$ - module, hence it has a unique (up to constant)  invariant Hermitian form, which is, in this case, non-degenerate, positive and gives the inner product of the corresponding Hilbert space $\mathcal H_1^{(\alpha,\lambda)},$ which is thereby uniquely determined up to constant. 
We normalize it, as usual, by the condition $\|v\|_\mathcal H = \|v\|_V$ for $v\in V$. (Note that $v$ regarded as a constant function is in $\mathcal H_!^{(\alpha, \lambda)}$. )


Each $\mathcal H_1^{(\alpha,\lambda)}$ ($\lambda\in \mathcal W_c(\alpha)$) has a reproducing kernel $K^{(\alpha,\lambda)}$ (cf. \cite{N}, Theorem XII.2.6 and Remarks to Sec XII.2) which can be explicitly described as follows.  Exactly as in \cite[p. 64]{S} but working in $\tilde{G}_{\rm loc}$ instead of $G,$ we set, for $z,w\in \mathcal{D}$
$$
\tilde{\mathcal K}(z,w) = \tilde{k}(\exp -\bar{w},z)^{-1},
$$  
the bar denoting conjugation with respect to $\mathfrak{g}$ in $\mathfrak{g}^\mathbb C,$ and also the lift of this map to $\tilde{G}^\mathbb{C}_{\rm loc}.$ For later use we also introduce the abbreviation 
$$Y_{z,\,w} = Y(\exp -\bar{w},z)$$
so the decomposition \eqref{1.11} appears now as 
\begin{equation}\label{eq:3.9}
(\exp - \bar{w}) (\exp z) = (\exp(\exp -\bar{w})\cdot z) \tilde{\mathcal{K}}(z,w)^{-1}(\exp Y_{w,\,z}).
\end{equation}
Interchanging $z$ and $w,$  taking inverses and conjugating gives another expression for the left hand side. By uniqueness in \eqref{1.11} this implies that  
\begin{eqnarray}
(\exp - \bar{w})\cdot z &=& -\overbar{Y_{z,w}} \label{eq:3.10}\\
\tilde{\mathcal{K}}(w,z) &=& \overbar{\tilde{\mathcal{K}}(w,z)}^{\,-1}\label{eq:3.11}
\end{eqnarray} 
Also, clearly $\tilde{\mathcal{K}}(z,0) = \tilde{\mathcal{K}}(0,z)\equiv e,$ and as in \cite{S},
\begin{equation}\label{eq:3.12}
 \tilde{\mathcal{K}}(gz,gw) = \tilde{k}(g,z) \, \tilde{\mathcal{K}}(z,w) \,\overbar{\tilde{k}(g,w)}^{\,-1}.
\end{equation}
We can now verify that 
$$
K^{(\alpha, \lambda)}(z,w) = (\chi_\lambda\otimes\alpha)\big (\tilde{\mathcal K}(z,w)\big ).
$$
In fact, for $k\in \tilde{K}^\mathbb{C}$ and any Hermitian representation $\varrho^0,$ we have  $\varrho^0(\bar{k}) = \overbar{\varrho^0(k)}^{-1}.$ So if we apply $\varrho^0=\chi_\lambda\otimes \alpha$ to \eqref{eq:3.12} 
we get the quasi-invariance \eqref{eq:3.8} of $K^{(\alpha, \lambda)}$ with respect to the canonical multiplier. \eqref{eq:3.12} also shows that $K^{(\alpha, \lambda)}(z,0)\equiv 1$ which corresponds to the normalization we fixed on $\mathcal H_1^{(\alpha,\lambda)}$.  These properties characterize $K^{(\alpha,\lambda)}$.  

Changing the normalization of the invariant inner product on $\mathcal H_1^{(\alpha,\lambda)}$ we get different regular unitary structures on the $E^{\alpha,\lambda}$. We consider this question in  the greater generality of $d E^{\alpha,\lambda}$, a direct sum of $d$ copies of $E^{\alpha, \lambda}$. 

Here the space of sections is  
$$ \Hol(\mathcal D, \mathbb C^d\otimes W^\alpha)\cong \mathbb C^d \otimes \Hol(\mathcal D, W^\alpha).$$ 
(We identify the two sides. In practice, this only amounts to writing $\mathbb C^d$- valued functions in terms of a basis in $\mathbb C^d.$) The $\tilde{G}$ action is now by $I_d \otimes U^{\alpha, \lambda}$. It follows that regular unitary structures are gotten by tensoring  the inner product in $\mathcal H^{(\alpha,\lambda)}$ with an arbitrary inner product on $\mathbb C^d$. We write this latter in terms of the standard inner product of $\mathbb C^d$ as $\langle \mu \cdot, \cdot \rangle_{\mathbb C^d}$ with a positive definite linear transformation $\mu$ on $\mathbb C^d$.

We denote the regular unitary structure so obtained by $\mathcal H^{(\alpha,\lambda)}_\mu$. It is trivial to check that it has a reproducing kernel, namely, 
$$\mu^{-1} \otimes K^{(\alpha,\lambda)}(z,w).$$
This now includes the case $d=1$, where $\mu$ is scalar.


In the following we keep using the notations involved in Theorem \ref{thm 2.5n}.
We consider an indecomposable Hhhvb $E^\varrho= E^y;$ $\varrho$ is understood to determine $\lambda \in \mathbb{R}.$ We have $E^0,$ which is given by \eqref{eqn:2.22} and \eqref{eqn:2.23}.

\begin{lem} \label{lem 3.5}
If the irreducible Hhhvb $E^\varrho=E^y$ is regularly unitarizable, then so is $E^0$. 
\end{lem}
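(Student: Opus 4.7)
The plan is to use the intertwining operator $\Gamma = \Gamma^{y,\lambda}$ furnished by Theorem \ref{thm 2.5n} to pull a regular unitary structure from $E^y$ back to $E^0$. The key structural observation is that, with respect to the decomposition $V = \oplus_{\ell,\beta} V_\ell^\beta$, the formula for $\Gamma$ in Theorem \ref{thm 2.5n} exhibits it as a block lower triangular operator with identity blocks on the diagonal and differential-operator blocks strictly below. Consequently $\Gamma$ is invertible as an operator on $\Hol(\mathcal D, V)$; its inverse $\Gamma^{-1}$ has the same qualitative shape and is therefore again a holomorphic differential operator with polynomial coefficients. In particular both $\Gamma$ and $\Gamma^{-1}$ carry the space $\mathscr P$ of $V$-valued polynomials into itself.

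Given a regular unitary structure $\mathcal H \subseteq \Hol(\mathcal D, V)$ on $E^y$, I would define
\[
\mathcal H_0 \;:=\; \Gamma^{-1}(\mathcal H) \;=\; \{\, f \in \Hol(\mathcal D, V) : \Gamma f \in \mathcal H\,\}
\]
and equip it with the pullback inner product $(f \mid h)_{\mathcal H_0} := (\Gamma f \mid \Gamma h)_{\mathcal H}$. Because $\Gamma$ is a bijection of $\Hol(\mathcal D, V)$ to itself, the map $\Gamma \colon \mathcal H_0 \to \mathcal H$ is an isometric isomorphism; hence $\mathcal H_0$ inherits a Hilbert space structure, sits inside $\Hol(\mathcal D, V)$, and contains $\mathscr P$ since $\Gamma(\mathscr P) \subseteq \mathscr P \subseteq \mathcal H$.

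The remaining point is the invariance of $\mathcal H_0$ under $U^0$ and the unitarity of this action. Both follow immediately from the intertwining identity $\Gamma \circ U^0_g = U^y_g \circ \Gamma$ of Theorem \ref{thm 2.5n}: for $f \in \mathcal H_0$ one has $\Gamma(U^0_g f) = U^y_g(\Gamma f) \in \mathcal H$, so $U^0_g f \in \mathcal H_0$, and
\[
(U^0_g f \mid U^0_g h)_{\mathcal H_0} \;=\; (U^y_g \Gamma f \mid U^y_g \Gamma h)_{\mathcal H} \;=\; (\Gamma f \mid \Gamma h)_{\mathcal H} \;=\; (f \mid h)_{\mathcal H_0}.
\]
Thus $\mathcal H_0$ is a regular unitary structure on $E^0$.

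The only non-routine step is the first one, namely that $\Gamma$ is invertible on $\Hol(\mathcal D, V)$ with $\Gamma^{-1}(\mathscr P) \subseteq \mathscr P$; this is the heart of the argument but is an immediate consequence of the explicit ``identity plus strictly lower-triangular differential part'' form of $\Gamma$ given in Theorem \ref{thm 2.5n}. Once this is in hand, everything else is a formal transfer. A conceptually equivalent alternative would be to apply Proposition \ref{prop:3.1} to $\Gamma^{-1}$ and define $\mathcal H_0$ by the reproducing kernel $\Gamma^{-1,(z)}(\Gamma^{-1,(w)})^{\#} K(z,w)$, whose quasi-invariance under the $U^0$-multiplier then follows from the intertwining relation.
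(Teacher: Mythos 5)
There is a genuine gap, and it is a logical one rather than a computational one. Your entire argument rests on the operator $\Gamma=\Gamma^{y,\lambda}$ of Theorem \ref{thm 2.5n}, but that theorem only furnishes $\Gamma$ under the hypothesis that $\lambda$ is \emph{regular}: the coefficients $c^{\boldsymbol\alpha}_{\ell j}$ contain the inverses $\bigl(c^{\boldsymbol\alpha}_{j+1}(\lambda)+c^{\boldsymbol\alpha}_{j+k}(\lambda)\bigr)^{-1}$, which need not exist for arbitrary $\lambda$. Lemma \ref{lem 3.5} assumes only that $E^y$ is regularly unitarizable and makes no regularity assumption on $\lambda$, so you cannot invoke $\Gamma$ without first proving that regular unitarizability forces $\lambda$ to be regular. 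In the paper that implication is precisely Corollary \ref{cor:1}, and its proof runs \emph{through} Lemma \ref{lem 3.5}: one first shows $E^0$ is regularly unitarizable, deduces $\lambda<\lambda_\alpha+j$ for all $\alpha\in A_j$, and then uses the positivity statement of Proposition \ref{prop:3.5} to conclude that the relevant $c^{\boldsymbol\alpha}_\ell(\lambda)$ are positive. So within the paper's development your proof is circular, and as a standalone argument it contains an unjustified step (the existence of $\Gamma$) that is essentially as deep as the lemma itself. The rest of your transfer argument (invertibility of the block-triangular $\Gamma$, preservation of $\mathscr P$, pullback of the inner product, invariance via the intertwining relation) is fine, and indeed the paper uses exactly this kind of transfer later in Theorem \ref{thm:3.4} --- but only \emph{after} regularity of $\lambda$ has been secured.

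The paper's proof avoids $\Gamma$ altogether and works directly with the composition series: set $\mathcal H_j=\mathcal H\cap\Hol(\mathcal D,\tilde V_j)$, which are closed $U^\varrho$-invariant subspaces (closed because point evaluations are continuous); the map $f+\mathcal H_{j+1}\mapsto f_j$ injects $\mathcal H_j/\mathcal H_{j+1}$ into $\Hol(\mathcal D,V_j)$ and intertwines the quotient action of $U^\varrho$ with the action $U_j$ given by the multiplier $\varrho^0_j(\tilde k(g,z))$; transferring the quotient inner product gives a regular unitary structure on each factor $E_j$, and hence on $E^0=\oplus_j E_j$. If you want to repair your write-up, you should either adopt this quotient argument or first supply an independent proof that regular unitarizability of $E^y$ implies regularity of $\lambda$.
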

\begin{proof}
Suppose $\mathcal H$ is a regular unitary structure on $E^\varrho$ and let $\mathcal H_j = \mathcal H\cap \Hol(\mathcal D, \tilde{V}_j)$ . By $U^\varrho$ invariance of $\Hol(\mathcal D, \tilde{V}_j)$ (cf. Sec. 1), each $\mathcal H_j$ is an invariant subspace of $\mathcal H$, closed because point evaluations are continuous on $\mathcal H$. The space of sections of the bundle $E_j$ holomorphically induced by $(\varrho^0_j,0)$ is $\Hol(\mathcal D, {V}_j)$. A representation $U_j$ of $\tilde{G}$ acts on it via the multiplier $\varrho^0_j(\tilde{k}(g,z))$. The one-to-one linear map $L$ of $\mathcal H_j/\mathcal H_{j+1}$ into $\Hol(\mathcal D, {V}_j)$ defined by $L(f+\mathcal H_j) = f_j$ intertwines the quotient action of $U^\varrho$ with $U_j$. The image of $L$ (which does  contain all $V_j$-valued polynomials) with the inner product transferred from   $\mathcal H_j/\mathcal H_{j+1}$ is then a regular unitary structure on $E_j$.  Together with $E_j$ then $E_0= \oplus E_j$ is also regularly unitarizable.  
\end{proof}

The logical order would now require us to first prove Proposition \ref{prop:3.5}, because the proof of Theorem \ref{thm:3.4} uses one of its  corollaries. We invert this order because the main significance of Proposition \ref{prop:3.5} (whose proof depends only on computations done in Section 2) lies in a different direction.

\begin{thm}\label{thm:3.4}
Let $E^y=E^{y,\lambda}$ be an indecomposable Hhhvb. Then $E^y$ is regularly unitarizable if and only if  $E^0$ is, which is the case if and only if $\lambda < \lambda_\alpha+j$ for all $\alpha \in A_j,$  $0\leq j \leq m$ in the decomposition of $E^0$ as $\bigoplus d_{j \alpha} E^{\alpha, \lambda-j}.$ The regular unitary structures of $E^0$ are 
$$
\mathcal H^0_\mu = \bigoplus_{j=0}^m \bigoplus_{\alpha \in A_j} \mathcal H^{(\alpha,\lambda-j)}_{\mu_{j\alpha}},
$$
where $\mu = \{\mu_{j\alpha}\}$ and each $\mu_{j\alpha}$ is a positive definite linear transformation on $\mathbb C^{d_{j\alpha}}$. The reproducing kernel of $\mathcal H^0_\mu$ is 
$$
K^0_\mu = \oplus \oplus \mu_{j\alpha}^{-1} \otimes K^{(\alpha,\lambda-j)}.
$$
The regular unitary structures of $E^y$ are the spaces $\mathcal H^y_\mu=\Gamma^{\lambda, y}\mathcal H^0_\mu$ with  $\Gamma^{\lambda, y}$ a unitary isomorphism. The corresponding reproducing kernel is 
$$K^y_\mu = \Gamma^{\lambda, y} {\Gamma^{\lambda, y}}^\# {K^0_\mu}.$$

For a fixed $y$ (and $\lambda$) all $\mathcal H_\mu^y$ are equal as sets, and their Hilbert norms are equivalent. 
\end{thm}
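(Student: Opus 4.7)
The plan is to reduce everything to the known holomorphic-discrete-series picture for the irreducible pieces $E^{\alpha,\lambda-j}$ by using the intertwining operator $\Gamma^{y,\lambda}$ from Theorem~\ref{thm 2.5n}. The key preliminary observation is that the lower triangular form of $\Gamma^{y,\lambda}$ with the identity on the diagonal makes it a bijection of $\Hol(\mathcal D,V)$ onto itself (the formal inverse is read off by back-substitution), and since all off-diagonal entries are holomorphic differential operators it also maps the polynomial space $\mathscr P$ bijectively onto $\mathscr P$.

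First I would settle $E^0$. Being a direct sum of $\tilde G$-homogeneous bundles induced from irreducible $\tilde K$-types, a $U^0$-invariant positive-definite Hermitian form on $\mathscr P$ must, by Schur's Lemma applied to the isotypic decomposition, be block-diagonal across the $(j,\alpha)$-pieces $\mathbb C^{d_{j\alpha}}\otimes\mathscr P(\mathfrak p^+,W^\alpha)$; on each such piece the classical theory of the holomorphic discrete series (\cite{EHW,Jak1} and the discussion preceding the theorem) tells us that an invariant inner product exists exactly when $\lambda-j<\lambda_\alpha$, is determined up to the positive definite $\mu_{j\alpha}$ on the multiplicity space, and has Hilbert completion $\mathcal H^{(\alpha,\lambda-j)}_{\mu_{j\alpha}}$ with reproducing kernel $\mu_{j\alpha}^{-1}\otimes K^{(\alpha,\lambda-j)}$. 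Assembling the pieces yields the stated descriptions of $\mathcal H^0_\mu$ and $K^0_\mu$.

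Next I would handle $E^y$. Given $\mathcal H^0_\mu$, transport its inner product along the bijection $\Gamma^{y,\lambda}$ and take the Hilbert completion: because $\Gamma^{y,\lambda}$ intertwines $U^0$ with $U^y$ (Theorem~\ref{thm 2.5n}), the resulting structure on $\mathcal H^y_\mu:=\Gamma^{y,\lambda}\mathcal H^0_\mu$ is $U^y$-invariant and, since $\Gamma^{y,\lambda}\mathscr P=\mathscr P$, contains $\mathscr P$, hence is a regular unitary structure on $E^y$. Propositions~\ref{prop:3.1} and~\ref{prop:3.3} applied to the holomorphic differential operator $\Gamma^{y,\lambda}$ immediately give the reproducing kernel $K^y_\mu=\Gamma^{y,\lambda}{\Gamma^{y,\lambda}}^{\#}K^0_\mu$. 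Conversely, if $\mathcal H$ is any regular unitary structure on $E^y$, Lemma~\ref{lem 3.5} shows $E^0$ is regularly unitarizable, and pulling the $\tilde G$-invariant Hermitian form on $\mathscr P$ carried by $\mathcal H$ back through $\Gamma^{y,\lambda}$ gives a $U^0$-invariant form on $\mathscr P$, which by the analysis above must coincide with that of some $\mathcal H^0_\mu$; the $(\mathfrak g,\tilde K)$-module rigidity recalled from \cite[Theorem~2.12]{V} then forces $\mathcal H=\Gamma^{y,\lambda}\mathcal H^0_\mu$. Together these establish the equivalence of regular unitarizability of $E^y$, of $E^0$, and of the inequalities $\lambda<\lambda_\alpha+j$ for every $\alpha\in A_j$, $0\le j\le m$.

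For the final sentence, varying $\mu=\{\mu_{j\alpha}\}$ only replaces each finite-dimensional $\mu_{j\alpha}$-inner product on $\mathbb C^{d_{j\alpha}}$ by an equivalent one, so the underlying vector space $\bigoplus_j\bigoplus_{\alpha\in A_j}\mathbb C^{d_{j\alpha}}\otimes\mathcal H^{(\alpha,\lambda-j)}_1$ is the same as a set with uniformly equivalent Hilbert norms for all $\mu$, and applying the bijection $\Gamma^{y,\lambda}$ preserves both the set equality and the norm equivalence. I expect the main obstacle to be the converse direction: ruling out ``exotic'' regular unitary structures on $E^y$ lying outside the family $\Gamma^{y,\lambda}\mathcal H^0_\mu$. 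This will require careful use of the $(\mathfrak g,\tilde K)$-module correspondence of \cite[Theorem~2.12]{V} together with the fact that $\Gamma^{y,\lambda}$ is a $(\mathfrak g,\tilde K)$-module isomorphism at the polynomial level, plus the uniqueness-up-to-$\mu$ of invariant Hermitian forms on the irreducible constituents of the holomorphic discrete series.
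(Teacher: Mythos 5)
Your overall route is the same as the paper's: classify the invariant inner products on $E^0$ via the $(\mathfrak g,\tilde K)$-module rigidity of the irreducible pieces $\mathcal H^{(\alpha,\lambda-j)}$, transport them to $E^y$ by the intertwining operator of Theorem \ref{thm 2.5n}, get the kernel from Propositions \ref{prop:3.1} and \ref{prop:3.3}, and deduce norm equivalence from the finite-dimensional $\mu$'s. But there is a genuine gap at the very point where you invoke $\Gamma^{y,\lambda}$: Theorem \ref{thm 2.5n} only defines $\Gamma^{y,\lambda}$ under the hypothesis that $\lambda$ is \emph{regular}, i.e.\ that the coefficients $c^{\boldsymbol\alpha}_{\ell j}$ of \eqref{2.22} are meaningful (no factor $c^{\boldsymbol\alpha}_{j+1}(\lambda)+c^{\boldsymbol\alpha}_{j+k}(\lambda)$ vanishes), whereas Theorem \ref{thm:3.4} carries no such hypothesis. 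In your converse step (``pulling the form on $\mathscr P$ back through $\Gamma^{y,\lambda}$'') the operator you pull back through is not yet known to exist, and even in the forward step you need to know that the condition $\lambda<\lambda_\alpha+j$ forces regularity before you can write $\mathcal H^y_\mu=\Gamma^{y,\lambda}\mathcal H^0_\mu$ at all. The paper closes exactly this hole with Corollary \ref{cor:1}: regular unitarizability of $E^y$ implies (via Lemma \ref{lem 3.5}) that $\lambda-j\in\mathcal W_c(\alpha)$ for all $\alpha\in A_j$, and then the positivity $c_1(\lambda)>0$ for $\lambda\in\mathcal W_c(\alpha)$ --- a nontrivial fact proved in Proposition \ref{prop:3.5} by evaluating a positive-definite kernel identity at $z=w=0$ --- guarantees that every $c^{\boldsymbol\alpha}_\ell(\lambda)$ is strictly positive, hence $\lambda$ is regular. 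This is why the paper explicitly inverts the logical order and proves Proposition \ref{prop:3.5} (or at least its corollary) before using $\Gamma$ in this theorem; your proposal never addresses it, and nothing in your argument substitutes for it.

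Apart from this, your argument is essentially the paper's. Two minor points you should still make explicit: the block-diagonality of an invariant form on $\mathscr P$ across the $(j,\alpha)$-pieces uses that the modules $\mathscr P^{(\alpha,\lambda-j)}$ are pairwise inequivalent (distinct lowest $\tilde K$-types $\chi_{\lambda-j}\otimes\alpha$), and the identification $\mathcal H=\Gamma^{y,\lambda}\mathcal H^0_\mu$ from equality of inner products on $\mathscr P$ uses that $\mathscr P$ is automatically dense in any regular unitary structure (the consequence of \cite[Theorem 2.12]{V} recorded before the theorem), so that both spaces are completions of the same pre-Hilbert space of holomorphic functions with continuous point evaluations.
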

\begin{proof}
 It is clear that every $\mathcal H^0_\mu$ is a regular unitary structure on $E^0$. Conversely, if $\mathcal H$ is a regular unitary structure, then it contains $\mathcal P$, which is now the direct sum of the spaces $\mathcal P^\alpha_j$ of $V^\alpha_j$ - valued polynomials. Each $\mathcal P_j^\alpha$   
is $\mathfrak u^0$ - invariant because $\mathfrak{u}^0$ (like $U^0$) respects the direct sum structure of $E^0.$ By a general result quoted above, $\mathcal H$ is therefore the direct sum of closures (in $\mathcal H$) of the spaces $\mathscr P^{\alpha, \lambda -j}.$ These closures are all of the form 
$\mathcal H^{(\alpha, \lambda-j)}_{\mu_{j\alpha}}$ because  as mentioned before, in the irreducible case the $(\mathfrak{g}, \tilde{K})$-module structure determines the inner product up to constant. 

The statement about the reproducing kernel $K^0$ is immediate from  the direct sum structure.

By Theorem \ref{thm 2.5n} and Corollary \ref{cor:1}, $\Gamma$ is an invertible map of $\Hol(\mathcal D,V)$ intertwining $U^0$ with $U^y=U^\varrho.$ Clearly, $\Gamma$ also maps $\mathscr P$ onto $\mathscr P$. So $\mathscr P \subseteq \Gamma\mathcal H^{0} \subseteq \Hol(\mathcal D,V)$ with an $U^\varrho$ - invariant inner product on $\mathcal H^{0}.$ Furthermore by Propositions \ref{prop:3.3} and \ref{prop:3.1}, $\Gamma \mathcal H^{0}$ is a complete Hilbert space with reproducing kernel $\Gamma \Gamma^\# K^0.$ 

As for the last statement, it clearly holds for every $E^{\alpha, \lambda}$, hence also for direct sums of such. So $\mathcal H_\mu^0$ is the same set for every $\mu$ and the norms are equivalent. Since $\Gamma^{y,\lambda}$ is by definition an unitary isomorphism of $\mathcal H_\mu^0$ onto $\mathcal H_\mu^y$, the same statement is true for the spaces $\mathcal H_\mu^y$.
\end{proof}

If some $E^y$ has a regular unitary structure $\mathcal H^y_\mu$, then it has a canonically associated Hermitian structure given by $H = K^y_\mu(0,0)^{-1}$.  (The inverse exists by Theorem \ref{thm:3.4} and by $K^{(\alpha, \lambda)}(0,0)= I$ for all $\alpha, \lambda$.) The regular unitary structure can be reconstructed from the Hermitian structure by the quasi-invariance of $K^y_\mu$. For Hermitian hhvb-s arising this way, we say that their metric \emph{comes from a regular unitary structure}.      
  
The fact that $U^\varrho$ on $\mathcal H^\varrho$ is equivalent to the direct sum of irreducibles is well known in the theory of the holomorphic discrete series; 
in Theorem \ref{thm:3.4} the equivalence is realized by the explicit differential operator $\Gamma.$   
  
In the second half of this section we will be looking at a filiform Hhhvb of two terms (i.e. with $m=1$). In the arguments, we need an expression for the adjoint of the map $\tilde{\varrho}_1(Y)$ defined by \eqref{2}. We derive this now as a preparation. 

Using notation of Section 2, but writing $P$ instead of $P_1,$ we define for any fixed $Y\in \mathfrak p^-$ the map $T_Y: V_0 \to \mathfrak p^-\otimes V_0$ by
$$T_Y v = Y\otimes v$$
so that $\tilde{\varrho}_1(Y) = PT_Y.$ A simple computation gives 
$$ T_Y^\# = Y^\# \otimes I_{V_0}$$ 
and therefore,  using the fact that $\iota Y= - \overbar{Y}$ for $Y\in \mathfrak p^-,$
\begin{equation}\label{eq:3.13}
\tilde{\varrho}_1(Y)^\# = \big ( Y^\# \otimes I_{V_0}\big ) P^\# = \big (B(\cdot, \overbar{Y})\otimes I_{V_0} \big ) P^\#.
\end{equation}

We consider the following situation. We set $\varrho^0_0=\chi_\lambda\otimes \alpha$ with some $\lambda \in \mathbb{R}$ and an irreducible Hermitian representation $\alpha$ of $\mathfrak{k}^\mathbb{C}_{\rm ss}.$ We take an irreducible component $\beta$ of $\Ad^\prime_{\mathfrak{p}^-}\otimes \alpha$
and set $\varrho_1^0 = \chi_{\lambda-1}\otimes \beta.$ We write $P$ for $P_{\alpha \beta}$ fixed as in Section 2, and define $\tilde{\varrho}_1(Y)$ by \eqref{2}. These data give a filiform representation with $m=1$ and we can use the corresponding notations and formulas of Section 2; in particular, we have the operator $P\iota D$ mapping sections of $E^{\alpha,\lambda}$ to sections of 
$E^{\beta,\lambda-1}.$ Based on Proposition \ref{prop:3.2} we will discuss the question whether $P\iota D$ is a bounded map of Hilbert spaces in case the two bundles are regularly unitarizable. 
\begin{prop}\label{prop:3.5}
For any $\mathcal D$ and any $\lambda, \alpha, \beta$ as above, 
\begin{equation}\label{eq:3.14}
\big (P\iota D^{(z)}\big )\big (P\iota {D^{(w)}}\big )^\# \varrho_0^0(\tilde{\mathcal{K}}(z,w)) = |c_1(\lambda)|^2 A(z,w) + \overbar{c_1(\lambda)}   \varrho_1^0(\tilde{\mathcal{K}}(z,w)),
\end{equation}
where $c_1(\lambda)$ is defined by Lemma \ref{lem 2.1} and 
\begin{equation}\label{eq:3.15}
A(z,w) = \tilde{\varrho}_1(Y_{zw})\varrho_0^0(\tilde{\mathcal{K}}(z,w)) \tilde{\varrho}_1(Y_{zw})^\# = P\big (\,Y_{z w}Y_{wz}^\#\otimes \varrho_0^0(\tilde{\mathcal{K}}(z,w))\, \big )P^\#.
\end{equation}
When $\lambda\in \mathcal W_c(\alpha),$ we have $c_1(\lambda) >0.$ 
\end{prop}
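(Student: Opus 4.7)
The plan is to compute $(P\iota D^{(z)})(P\iota D^{(w)})^\#\varrho_0^0(\tilde{\mathcal K}(z,w))$ in three steps. First, Lemma~\ref{lem 2.2} with $j=0$, $g=\exp-\bar w$, and a \emph{constant} $F=v\in V_0$ (so the correction term vanishes) immediately gives
\[
P\iota D^{(z)}\varrho_0^0(\tilde{\mathcal K}(z,w)) = -c_1(\lambda)\,\tilde{\varrho}_1(Y_{z,w})\,\varrho_0^0(\tilde{\mathcal K}(z,w)).
\]
Swapping $z\leftrightarrow w$ in this identity and taking the $\#$-adjoint through \eqref{eq:3.6}, and noting that the Hermitian property of $\varrho_0^0$ together with \eqref{eq:3.11} implies $\varrho_0^0(\tilde{\mathcal K}(w,z))^\# = \varrho_0^0(\tilde{\mathcal K}(z,w))$, yields
\[
(P\iota D^{(w)})^\#\varrho_0^0(\tilde{\mathcal K}(z,w)) = -\overbar{c_1(\lambda)}\,\varrho_0^0(\tilde{\mathcal K}(z,w))\,\tilde{\varrho}_1(Y_{w,z})^\#.
\]

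The crux of the proof is the observation that $\tilde{\varrho}_1(Y_{w,z})^\#$ is \emph{holomorphic} in $z$ even though $\tilde{\varrho}_1(Y_{w,z})$ itself is antiholomorphic: combining \eqref{eq:3.13} with \eqref{eq:3.10} (applied with $z,w$ swapped) one rewrites
\[
\tilde{\varrho}_1(Y_{w,z})^\# = -\bigl(B(\cdot,(\exp-\bar w)\cdot z)\otimes I_{V_0}\bigr)P^\#,
\]
whose $z$-dependence enters only through the holomorphic map $z\mapsto(\exp-\bar w)\cdot z$. Consequently, applying $P\iota D^{(z)}$ to the product $\varrho_0^0(\tilde{\mathcal K}(z,w))\,\tilde{\varrho}_1(Y_{w,z})^\#$ gives two Leibniz contributions. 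To capture them at once, I will invoke Lemma~\ref{lem 2.2} a \emph{second} time with the same $g=\exp-\bar w$ but now with the $V_0$-valued holomorphic function $F(X) := \tilde{\varrho}_1(-\overbar X)^\# v$ for $v\in V_1$; by \eqref{eq:3.10} this arranges $F(g\cdot z)=\tilde{\varrho}_1(Y_{w,z})^\# v$. From \eqref{eq:3.13}, $F$ is $\mathbb C$-linear in $X$ with $\iota DF\equiv -P^\# v$, and since $P$ is a partial isometry with $PP^\# = I_{V_1}$ one gets $(P_1\iota DF)(X)\equiv -v$. The two terms of Lemma~\ref{lem 2.2} then produce
\[
-c_1(\lambda)\,\tilde{\varrho}_1(Y_{z,w})\,\varrho_0^0(\tilde{\mathcal K}(z,w))\,\tilde{\varrho}_1(Y_{w,z})^\# v \;-\; \varrho_1^0(\tilde{\mathcal K}(z,w)) v,
\]
the second summand featuring $\varrho_1^0$ rather than $\varrho_0^0$ because the $\tilde K^{\mathbb C}$-equivariance of $P_1$ combined with $\Ad_{\mathfrak p^-} = \chi_{-1}\otimes\Ad'_{\mathfrak p^-}$ converts $\chi_\lambda\otimes\alpha$ into $\chi_{\lambda-1}\otimes\beta$. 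Multiplying through by the $-\overbar{c_1(\lambda)}$ from the first step yields precisely $|c_1(\lambda)|^2 A(z,w)+\overbar{c_1(\lambda)}\varrho_1^0(\tilde{\mathcal K}(z,w))$.

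For the positivity assertion, Propositions~\ref{prop:3.1} and~\ref{prop:3.3} imply that $TT^\# K^{(\alpha,\lambda)}$, with $T=P\iota D$, is the reproducing kernel of the range of $T$ on $\mathcal H^{(\alpha,\lambda)}$, and is therefore positive semidefinite. Evaluating the identity at $z=w=0$ and using $Y_{0,0}=0$ (whence $A(0,0)=0$) together with $\varrho_1^0(\tilde{\mathcal K}(0,0))=I$, we obtain $\overbar{c_1(\lambda)}\,I \succeq 0$, i.e.\ $c_1(\lambda)\geq 0$. If $c_1(\lambda)=0$ then the identity forces $TT^\# K^{(\alpha,\lambda)}\equiv 0$, hence $T$ vanishes on the Fr\'echet-dense subspace $\mathscr P$ of $\mathcal H^{(\alpha,\lambda)}$, contradicting the fact that $P\neq 0$ in the filiform setup. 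The main obstacle is the second application of Lemma~\ref{lem 2.2}: recognizing the holomorphic $z$-dependence of $\tilde{\varrho}_1(Y_{w,z})^\#$ and engineering $F$ so that its correction term collapses (via $PP^\# = I$ and the equivariance of $P_1$) to exactly the missing $\varrho_1^0(\tilde{\mathcal K}(z,w))$ summand.
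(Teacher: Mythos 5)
Your proof is correct and follows essentially the same route as the paper: compute $\big(P\iota D^{(w)}\big)^{\#}$ acting on $\varrho_0^0(\tilde{\mathcal K}(z,w))$ first, recognize the resulting factor $\tilde\varrho_1(Y_{w,z})^{\#}$ as a holomorphic linear function of $z$ composed with the action of $\exp-\bar w$, apply Lemma~\ref{lem 2.2} to that linear $F$ using $PP^{\#}=I_{V_1}$ for the correction term, and then get positivity of $c_1(\lambda)$ by evaluating the (positive definite) kernel at $z=w=0$ and excluding $c_1(\lambda)=0$. The only differences are cosmetic (you invoke Lemma~\ref{lem 2.2} with constant $F$ where the paper cites Lemmas~\ref{lem 1.6} and~\ref{lem 2.1} directly, and you build the sign from \eqref{eq:3.10} into your auxiliary function $F$, which in fact keeps the bookkeeping cleaner than the paper's $\phi$).
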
 
 
\begin{proof}
Using the definition of $\tilde{\mathcal{K}},$ Lemma \ref{lem 1.6} and Lemma \ref{lem 2.1} we immediately get 
$$
\big (P\iota {D^{(w)}} \big )^\#\varrho_0^0(\tilde{\mathcal{K}}(z,w)) = \big (P\iota D^{(w)} \big )\varrho_0^0(\tilde{\mathcal{K}}(w,z)) = - \overbar{c_1(\lambda)} \varrho_0^0(\tilde{\mathcal{K}}(z,w))\tilde{\varrho}_1(Y_{wz})^\#.
$$
We set $\phi(z) = \tilde{\varrho}_1(\bar{z})^\#,$ using \eqref{eq:3.10}, the right hand side can be written 
$$ - \overbar{c_1(\lambda)} \varrho_0^0\big (\tilde{k}(\exp -\bar{w},z)^{-1}\big )(\phi(\exp -\bar{w}) \cdot z).
$$
We have to apply $P\iota D^{(z)}$ to this. We do it by applying Lemma \ref{lem 2.2}, which is certainly applicable to $F(z)=\phi(z)v$ with any $v\in V_0,$ therefore also to $\phi(z)$ by linearity in $v.$ The first term Lemma \ref{lem 2.2} gives is exactly $|c_1(\lambda)|^2$ times the first expression for $A(z,w)$ in \eqref{eq:3.15} (which is equal to the second expression by \eqref{eq:3.13}). The second term Lemma \ref{lem 2.2} gives  is 
$$ - \overbar{c_1(\lambda)} \varrho_0^0\big (\tilde{\mathcal K}(z,w) \big )
\big (P\iota D\phi \big ) (\exp -\bar{w}) \cdot z).
$$
Now $P\iota D \phi$ is constant since $\phi$ is linear in $z.$ More exactly, for any $X\in \mathfrak p^+,$ by \eqref{eq:3.13} we have 
$$
\iota D_X\phi(z) = \iota \big (B(\cdot,X)\otimes I_{V_0}\big ) P^\# = (X \otimes I_{V_0}) P^\#,$$
i.e. 
$$\iota D\phi(z) = (I_{\mathfrak p^-} \otimes I_{V_0})P^\#$$
and 
$$P \iota D\phi(z)=P(I_{\mathfrak p^-} \otimes I_{V_0})P^\# = I_{V_1}$$
finishing the proof of \eqref{eq:3.14}.
To prove the last statement: Now $\varrho^0_0(\mathcal K(z,w))$ is a positive defninite kernel, hence so is the whole left hand side of \eqref{eq:3.14}.  Therefore, for $z=0=w$ it is a positive operator. The right hand side of \eqref{eq:3.14} shows this to be equal to $\overline{c_1(\lambda)} I_{V_1}.$ Hence $c_1(\lambda) \geq 0$.  But $c_1(\lambda) = 0$ is impossible since it would imply that the left hand side is identically zero, 
which is not the case since $\varrho^0_0(\tilde{\mathcal K}(z,w))$ is the reproducing kernel of a space containing all polynomials.  \end{proof} 
\begin{cor}\label{cor:1}
If in the notation of Theorem \ref{thm 2.5n}, $E^\varrho=E^y$ is regularly unitarizable, then the corresponding $\lambda$ is regular. 
\end{cor}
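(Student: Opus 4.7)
The plan is to deduce the corollary from Proposition \ref{prop:3.5} together with Lemma \ref{lem 3.5} and the decomposition of $E^0$ into irreducibles. Regularity of $\lambda$ for a filiform sequence $\boldsymbol\alpha=(\alpha_j,\ldots,\alpha_i)$ means that all the factors $c_{j+1}^{\boldsymbol\alpha}(\lambda)+c_{j+k}^{\boldsymbol\alpha}(\lambda)$ ($k=1,\ldots,i-j$) are nonzero, and the strategy is to prove the stronger statement that each individual $c_\ell^{\boldsymbol\alpha}(\lambda)$ is strictly positive. Then the sums in question are sums of positive numbers, hence nonzero.

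First, assume $E^\varrho=E^y$ is regularly unitarizable. By Lemma \ref{lem 3.5}, so is $E^0=\bigoplus_{j=0}^m \bigoplus_{\alpha\in A_j} d_{j\alpha}\, E^{\alpha,\lambda-j}$. Each direct summand $E^{\alpha,\lambda-j}$ inherits a regular unitary structure from the corresponding isotypic component of a regular unitary structure on $E^0$ (using the $\tilde G$-equivariant projections that exist because $\varrho^0$ respects the decomposition). By the known theory of the holomorphic discrete series recalled in the excerpt, this forces $\lambda-j\in\mathcal W_c(\alpha)$ for every $\alpha\in A_j$ and every $0\le j\le m$.

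Now fix a filiform sequence $\boldsymbol\alpha=(\alpha_j,\ldots,\alpha_i)$ occurring in $\varrho$ and fix an index $\ell$ with $j+1\le \ell\le i$. Consider the two-term filiform subrepresentation built from the adjacent pair $(\alpha_{\ell-1},\alpha_\ell)$, viewed as a filiform Hermitizable representation with $V_0$ of weight $\lambda-\ell+1$ and $V_1$ of weight $\lambda-\ell$. Because $c_\ell^{\boldsymbol\alpha}(\lambda)$ depends only on $\alpha_{\ell-1},\alpha_\ell$ (as noted in the text following \eqref{2.22}), it equals the constant $c_1$ produced by Lemma \ref{lem 2.1} for this two-term pair with top weight $\lambda-\ell+1$. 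Applying Proposition \ref{prop:3.5} to the pair $(\alpha_{\ell-1},\alpha_\ell)$ with the parameter $\lambda-\ell+1\in\mathcal W_c(\alpha_{\ell-1})$ — which we established in the previous paragraph — yields $c_\ell^{\boldsymbol\alpha}(\lambda)>0$.

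Consequently, for every $0\le j<i\le$ length of $\boldsymbol\alpha$ and every $1\le k\le i-j$, the quantity $c_{j+1}^{\boldsymbol\alpha}(\lambda)+c_{j+k}^{\boldsymbol\alpha}(\lambda)$ is a sum of positive numbers, hence strictly positive and in particular nonzero. Thus the product defining $c_{ij}^{\boldsymbol\alpha}$ is meaningful for every $\boldsymbol\alpha$ occurring in $\varrho$, which is precisely the regularity of $\lambda$. The only subtle point is the matching of $c_\ell^{\boldsymbol\alpha}(\lambda)$ in the long sequence with the $c_1$ of the two-term subpair; this is supplied by the invariance statement about $c_\ell^{\boldsymbol\alpha}$ noted after \eqref{2.22} and is easily read off the explicit formula \eqref{2.8}.
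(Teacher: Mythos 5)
Your argument is correct and is essentially the paper's own proof, only written out in more detail: the paper likewise invokes Lemma \ref{lem 3.5} to get regular unitarizability of $E^0$, concludes $\lambda-j\in\mathcal W_c(\alpha)$ for all $\alpha\in A_j$, and then cites the positivity statement of Proposition \ref{prop:3.5} (applied, as you do, to each adjacent pair $(\alpha_{\ell-1},\alpha_\ell)$ with the shifted parameter) to conclude that every $c_\ell^{\boldsymbol\alpha}(\lambda)$ is positive, so the products in \eqref{2.22} are meaningful. Your explicit justification of the weight shift $\lambda-\ell+1$ and of how each summand $E^{\alpha,\lambda-j}$ inherits a regular unitary structure merely fills in steps the paper leaves implicit.
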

\begin{proof}
By Lemma \ref{lem 3.5}, $E^0$ is regularly unitarizable, hence $\lambda < \lambda_\alpha+j$ for each $\alpha \in A_j$. This implies that each $c^{\boldsymbol \alpha}_\ell(\lambda)$ occurring in \eqref{2.22} is positive. 
\end{proof}
\begin{cor}\label{cor:2}
If $\lambda\in \mathcal W_c(\alpha),$ then the following statements are equivalent:
\begin{enumerate}
\item $A(z,w) \prec C\varrho_1^0(\mathcal{K}(z,w))$ for some $C >0.$
\item $\lambda -1 \in \mathcal W_c(\beta)$ and $P\iota D$ is a bounded linear operator from $\mathcal H^{(\alpha, \lambda)}$ to $\mathcal H^{(\beta, \lambda-1)}.$
\end{enumerate}
\end{cor}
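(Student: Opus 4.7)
My approach is to read both implications directly from the kernel identity of Proposition \ref{prop:3.5}, combined with the kernel-domination criterion of Proposition \ref{prop:3.2}. Since $\lambda \in \mathcal{W}_c(\alpha)$, I have $K^{(\alpha,\lambda)} = \varrho_0^0(\tilde{\mathcal{K}})$, $c_1(\lambda) > 0$, and can abbreviate
$$L(z,w) := (P\iota D)^{(z)}(P\iota D)^{(w)\#} K^{(\alpha,\lambda)}(z,w) = c_1(\lambda)^2 A(z,w) + c_1(\lambda)\,\varrho_1^0(\tilde{\mathcal{K}}(z,w)).$$
By Proposition \ref{prop:3.3}, the holomorphic differential operator $P\iota D$ satisfies the hypotheses of Proposition \ref{prop:3.1}, so $L$ is the reproducing kernel of the image $(P\iota D)\mathcal{H}^{(\alpha,\lambda)}$ with the transferred Hilbert-space structure; in particular $L \succ 0$. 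These three facts are what the proof will keep using.

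For the direction (2) $\Rightarrow$ (1), I would invoke the ``only if'' half of Proposition \ref{prop:3.2}: boundedness of $P\iota D$ with norm at most $c$ gives $L \prec c^2 K^{(\beta,\lambda-1)} = c^2 \varrho_1^0(\tilde{\mathcal{K}})$. Subtracting $c_1(\lambda)\varrho_1^0(\tilde{\mathcal{K}})$ from both sides of the displayed identity and dividing by the positive number $c_1(\lambda)^2$ yields the estimate in (1) with $C = (c^2 - c_1(\lambda))/c_1(\lambda)^2$, enlarging $c$ if needed to keep this positive.

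The reverse direction (1) $\Rightarrow$ (2) is where the only real obstacle lies: I cannot cite Proposition \ref{prop:3.2} until $\mathcal{H}^{(\beta,\lambda-1)}$ is known to exist, i.e.\ until $\varrho_1^0(\tilde{\mathcal{K}}) \succ 0$, and the hypothesis $A \prec C\varrho_1^0(\tilde{\mathcal{K}})$ on its own does not manifestly give this. The plan is to rewrite the identity, using $C\varrho_1^0(\tilde{\mathcal{K}}) - A \succ 0$, as
$$(c_1(\lambda)^2 C + c_1(\lambda))\,\varrho_1^0(\tilde{\mathcal{K}}) \;=\; L \;+\; c_1(\lambda)^2\bigl(C\varrho_1^0(\tilde{\mathcal{K}}) - A\bigr).$$
The right-hand side is a sum of two positive-definite kernels, and the positive scalar on the left then forces $\varrho_1^0(\tilde{\mathcal{K}}) \succ 0$. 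This identifies it as the reproducing kernel $K^{(\beta,\lambda-1)}$ and gives $\lambda-1 \in \mathcal{W}_c(\beta)$. Reading the same inequality as $L \prec (c_1(\lambda)^2 C + c_1(\lambda))K^{(\beta,\lambda-1)}$ and applying the ``if'' half of Proposition \ref{prop:3.2} then produces the required boundedness of $P\iota D : \mathcal{H}^{(\alpha,\lambda)} \to \mathcal{H}^{(\beta,\lambda-1)}$, completing the proof.
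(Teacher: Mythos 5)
Your argument is essentially the paper's: the published proof of this corollary is the single line ``This follows from Proposition \ref{prop:3.2},'' with the intended input being exactly the identity of Proposition \ref{prop:3.5} together with $c_1(\lambda)>0$, so both of your directions reduce to the kernel-domination criterion just as the paper intends. Your added care in the direction (1) $\Rightarrow$ (2) --- first extracting positive definiteness of $\varrho_1^0(\tilde{\mathcal K}(z,w))$ by writing $(c_1(\lambda)^2C+c_1(\lambda))\varrho_1^0(\tilde{\mathcal K})$ as a sum of positive kernels before invoking Proposition \ref{prop:3.2} --- actually goes a bit beyond the paper's one-liner; the only step you assert with the same brevity as the paper is that positivity of this quasi-invariant kernel already yields $\lambda-1\in\mathcal W_c(\beta)$ (i.e.\ that the resulting Hilbert space contains all $V_1$-valued polynomials), a point the paper likewise leaves implicit.
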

\begin{proof}
This follows from Proposition \ref{prop:3.2}.
\end{proof}
The following is a partial reduction of the boundedness question to the ``scalar case'', i.e. the case where $\alpha = \mathbf{1}$ is the trivial representation, so $V_0 = \mathbb C.$   The corresponding vector Hhvb-s are the line bundles $L_\lambda$ already occurring in Theorem \ref{prop 1.2}. 

\begin{cor}\label{cor:3}
Suppose $\lambda_0 \in \mathcal W_c(\mathbf{1}),$ $\lambda_0 -1 \in \mathcal W_c(\Ad_{\mathfrak p^-}^\prime)$ and $\iota D$ is bounded from 
$\mathcal{H}^{(\mathbf{1},\lambda_0)}$ to $\mathcal{H}^{(\Ad_{\mathfrak p^-}^\prime, \,\lambda_0-1)}.$  Then for any irreducible $\alpha$ and any $\lambda\in \mathcal W_c(\alpha),$ we have $\lambda+\lambda_0 \in \mathcal W_c(\alpha),$ $\lambda+ \lambda_0 - 1\in \mathcal W_c(\beta),$ and $P\iota D$ is bounded from   $\mathcal{H}^{(\alpha,\lambda + \lambda_0)}$ to $\mathcal{H}^{(\beta,\lambda+ \lambda_0-1)}.$
\end{cor}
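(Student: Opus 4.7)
The plan is to deduce Corollary \ref{cor:3} from Corollary \ref{cor:2} applied to the pair $(\alpha,\lambda+\lambda_0)$, $(\beta,\lambda+\lambda_0-1)$: the bound (1) of Corollary \ref{cor:2} in the scalar case, provided by the hypothesis, will be promoted to the same bound in the general case by a tensoring-and-compression argument. First, since $\iota D$ is bounded from $\mathcal{H}^{(\mathbf{1},\lambda_0)}$ to $\mathcal{H}^{(\mathrm{Ad}'_{\mathfrak{p}^-},\lambda_0-1)}$, Corollary \ref{cor:2} applied in the scalar case ($\alpha=\mathbf{1}$, $\beta=\mathrm{Ad}'_{\mathfrak{p}^-}$, so $P$ is the identity on $\mathfrak{p}^-$) furnishes a constant $C_0>0$ with
\[
A^{\mathbf{1},\lambda_0}(z,w) \;\prec\; C_0\, K^{(\mathrm{Ad}'_{\mathfrak{p}^-},\lambda_0-1)}(z,w).
\]

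Next, using the multiplicativity $(\chi_{\lambda+\lambda_0}\otimes\alpha)(\tilde{\mathcal{K}})=\chi_{\lambda_0}(\tilde{\mathcal{K}})\cdot K^{(\alpha,\lambda)}$, the second formula of \eqref{eq:3.15} for the general case rearranges to
\[
A^{\alpha,\lambda+\lambda_0}(z,w)\;=\;P\bigl[\,A^{\mathbf{1},\lambda_0}(z,w)\otimes K^{(\alpha,\lambda)}(z,w)\,\bigr]P^{\#},
\]
while a parallel computation, combined with $P(\mathrm{Ad}'_{\mathfrak{p}^-}\otimes\alpha)(k)P^{\#}=\beta(k)PP^{\#}=\beta(k)$ for $k\in\tilde{K}^{\mathbb{C}}$ (the $\tilde{K}$-equivariance of $P$ extends to $\tilde{K}^\mathbb{C}$ by holomorphic extension, and $PP^{\#}=I_{W^\beta}$ since $P$ is a partial isometry onto $W^\beta$), yields
\[
K^{(\beta,\lambda+\lambda_0-1)}(z,w)\;=\;P\bigl[\,K^{(\mathrm{Ad}'_{\mathfrak{p}^-},\lambda_0-1)}(z,w)\otimes K^{(\alpha,\lambda)}(z,w)\,\bigr]P^{\#}.
\]
Schur-tensoring a kernel inequality by the PD kernel $K^{(\alpha,\lambda)}$ preserves $\prec$, as does compression by the fixed $P,P^{\#}$; applying both operations to the scalar bound transfers it to
\[
A^{\alpha,\lambda+\lambda_0}(z,w)\;\prec\;C_0\,K^{(\beta,\lambda+\lambda_0-1)}(z,w).
\]

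It remains to verify $\lambda+\lambda_0\in\mathcal{W}_c(\alpha)$ and $\lambda+\lambda_0-1\in\mathcal{W}_c(\beta)$. Invoking the classical characterization of $\mathcal{W}_c$ by positive-definiteness of the reproducing-kernel formula, the first follows from
$K^{(\alpha,\lambda+\lambda_0)}=K^{(\mathbf{1},\lambda_0)}\cdot K^{(\alpha,\lambda)}$ being a pointwise product of PD kernels, hence PD; the second is immediate from the displayed formula exhibiting $K^{(\beta,\lambda+\lambda_0-1)}$ as the $P$-compression of the tensor of two PD kernels. Corollary \ref{cor:2} then yields the stated boundedness of $P\iota D$ from $\mathcal{H}^{(\alpha,\lambda+\lambda_0)}$ to $\mathcal{H}^{(\beta,\lambda+\lambda_0-1)}$. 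The one substantive point of care is the bookkeeping of the kernel algebra—moving the scalar factor $\chi_{\lambda_0}(\tilde{\mathcal{K}})$ through the tensor product and through the compression by $P$, and confirming the holomorphic extension of the equivariance of $P$ to $\tilde{K}^\mathbb{C}$—after which the whole argument is a direct application of Corollary \ref{cor:2}.
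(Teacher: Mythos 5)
Your main argument is exactly the paper's proof: apply Corollary \ref{cor:2} in the scalar case to get $\chi_{\lambda_0}(\tilde{\mathcal K}(z,w))\,Y_{zw}Y_{wz}^\#\prec C\,\chi_{\lambda_0-1}(\tilde{\mathcal K}(z,w))\Ad^\prime_{\mathfrak p^-}(\tilde{\mathcal K}(z,w))$, tensor with the positive definite kernel $(\chi_\lambda\otimes\alpha)(\tilde{\mathcal K}(z,w))$, compress by $P$ and $P^\#$, identify the left side as $A(z,w)$ for $\chi_{\lambda+\lambda_0}\otimes\alpha$ and the right side as $(\chi_{\lambda+\lambda_0-1}\otimes\beta)(\tilde{\mathcal K}(z,w))$, and invoke Corollary \ref{cor:2} again. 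Your explicit verification that $P(\Ad^\prime_{\mathfrak p^-}\otimes\alpha)(\tilde{\mathcal K})P^\#=\beta(\tilde{\mathcal K})$ (equivariance of $P$ extended to $\tilde K^{\mathbb C}$, together with $PP^\#=I_{W^\beta}$) is a correct filling-in of a step the paper states without comment.

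The one defective step is your justification of the memberships $\lambda+\lambda_0\in\mathcal W_c(\alpha)$ and $\lambda+\lambda_0-1\in\mathcal W_c(\beta)$ by a ``characterization of $\mathcal W_c$ by positive-definiteness.'' No such characterization holds: $\mathcal W_c(\alpha)$ is the set where $E^{\alpha,\lambda}$ is \emph{regularly} unitarizable (the space must contain all polynomials), whereas positive definiteness of $(\chi_\lambda\otimes\alpha)(\tilde{\mathcal K})$ also holds at the discrete Wallach points, where the associated reproducing kernel space omits $K$-types (already in the scalar case the kernel $h^0\equiv 1$ is positive definite but the space consists of constants). So ``product/compression of PD kernels is PD'' does not by itself put a parameter in $\mathcal W_c$. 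The repair is immediate and is what the paper tacitly uses: $\mathcal W_c(\alpha)$ is a half-line $(-\infty,\lambda_\alpha)$ and, by \eqref{eq:3.18}, $\lambda_0<-\tfrac{n}{p}(r-1)\tfrac{a}{2}\le 0$, so $\lambda+\lambda_0<\lambda<\lambda_\alpha$, i.e. $\lambda+\lambda_0\in\mathcal W_c(\alpha)$; this is precisely the standing hypothesis needed to apply Corollary \ref{cor:2} with parameter $\lambda+\lambda_0$, and then $\lambda+\lambda_0-1\in\mathcal W_c(\beta)$ requires no separate argument, since it is part of conclusion (2) of that corollary, which you are invoking for the boundedness anyway. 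With this replacement your proof coincides with the paper's.
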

\begin{proof}
By Corollary \ref{cor:2}, the hypothesis implies 
$$
\chi_{\lambda_0}(\tilde{\mathcal{K}}(z,w)) Y_{z w}Y_{w z}^\# \prec C \chi_{\lambda_0-1}(\tilde{\mathcal{K}}(z,w))\Ad^\prime_{\mathfrak{p}^-}(\tilde{\mathcal{K}}(z,w))
$$
for some $C.$ This relation remains true after tensoring with the positive definite kernel $(\chi_{\lambda}\otimes \alpha)(\tilde{\mathcal{K}}(z,w))$ and then multiplying by $P$ on the left and $P^\#$ on the right. Hence 
$$
\chi_{\lambda+\lambda_0}(\tilde{\mathcal{K}}(z,w)) P \big ( Y_{z w}Y_{w z}^\# \otimes \alpha(\tilde{\mathcal{K}}(z,w)) \big )P^\# \prec C \chi_{\lambda+\lambda_0-1}(\tilde{\mathcal{K}}(z,w)) P (\Ad^\prime_{\mathfrak{p}^-}\otimes \alpha)(\tilde{\mathcal{K}}(z,w)) P^\#.
$$
The left hand side is just $A(z,w)$ corresponding to $\chi_{\lambda+\lambda_0}\otimes \alpha,$ and the right hand side is $\chi_{\lambda+\lambda_0-1}\otimes \beta (\tilde{\mathcal{K}}(z,w)).$ Now Corollary \ref{cor:2} implies our statement. 
\end{proof}
The last corollary shows the particular importance of the scalar case. A number of things are known about this case (cf. \cite{FK, KW}):
\begin{equation}\label{eq:3.16}
h(z,w) = \chi_{\tfrac{n}{p}}(\tilde{\mathcal{K}}(z,w))
\end{equation}
is a polynomial, holomorphic in $z,$ anti-holomorphic in $w,$ of bidegree $(r,r),$ where $r$ is the rank of $\mathcal{D};$ it can be characterized in several equivalent ways. It has an expansion
\begin{equation}\label{eq:3.17}
h(z,w) = 1-\tfrac{1}{2p} \langle z, w\rangle + \cdots ,
\end{equation}
the other terms homogeneous of  bidegree at least $(2,2).$ ($\tfrac{1}{2p}$ is the factor normalizing the inner product so that the inscribed sphere of $\mathcal{D}$ has radius $1.$ It was computed in \cite{KW}.) Furthermore, it is known that 
\begin{equation}\label{eq:3.18}
\mathcal W_c(\mathbf{1}) = \{ \lambda < - \tfrac{n}{p}(r-1)\tfrac{a}{2}\},
\end{equation}
i.e. $\lambda_1 = - \tfrac{n}{p}(r-1)\tfrac{a}{2}$. 
\begin{prop}\label{prop:3.6}
For any irreducible $\mathcal{D},$
\begin{equation} \label{eq:3.19}
\Ad_{\mathfrak p^-}(\tilde{\mathcal{K}}(z,w))=-2p\iota D^{(z)} (\iota D^{(w)})^\# \log h(z,w).
\end{equation}
\end{prop}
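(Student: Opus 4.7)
The plan is to deduce Proposition \ref{prop:3.6} as the linear-in-$\lambda$ coefficient of Proposition \ref{prop:3.5} applied to the filiform pair $\alpha = \mathbf{1}$ (the trivial representation of $\mathfrak{k}_{\rm ss}^{\mathbb{C}}$) and $\beta = \Ad'_{\mathfrak{p}^-}$, with $\lambda$ treated as a free real parameter.

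First I would specialize. In this case $V_0 = \mathbb{C}$, $V_1 = \mathfrak{p}^-$, the equivariant map $P$ is the canonical isomorphism $\mathfrak{p}^- \otimes \mathbb{C} \cong \mathfrak{p}^-$, and $\varrho^0_0(\tilde{\mathcal{K}}) = \chi_\lambda(\tilde{\mathcal{K}})$. Since $\Ad_{\mathfrak{p}^-} = \chi_{-1} \otimes \Ad'_{\mathfrak{p}^-}$, we get $\varrho^0_1(\tilde{\mathcal{K}}) = \chi_\lambda(\tilde{\mathcal{K}}) \cdot \Ad_{\mathfrak{p}^-}(\tilde{\mathcal{K}})$. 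Moreover, because $\alpha_0 = \mathbf{1}$ is trivial on $\mathfrak{k}_{\rm ss}^{\mathbb{C}}$, the ${\varrho^0_0}'$-contribution in the decomposition used in the proof of Lemma \ref{lem 2.1} vanishes identically, so one reads off $c_1(\lambda) = -\lambda/(2n)$ exactly. Substituting into Proposition \ref{prop:3.5} gives
\begin{equation}\label{eq:Akey}
(\iota D^{(z)})(\iota D^{(w)})^{\#} \chi_\lambda(\tilde{\mathcal{K}}(z,w)) = \tfrac{\lambda^2}{4n^2}\, A(z,w)\, -\, \tfrac{\lambda}{2n}\, \chi_\lambda(\tilde{\mathcal{K}}(z,w))\, \Ad_{\mathfrak{p}^-}(\tilde{\mathcal{K}}(z,w)),
\end{equation}
where $A(z,w)$ depends on $\lambda$ only through the scalar $\chi_\lambda(\tilde{\mathcal{K}}(z,w))$ and stays bounded as $\lambda \to 0$.

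Next I would convert to $\log h$. The character $\chi_\lambda$ extends holomorphically to $\tilde{K}^{\mathbb{C}}$ by $\chi_\lambda(\exp \zeta \hat{z}) = e^{i\zeta\lambda}$, so from $h = \chi_{n/p}(\tilde{\mathcal{K}})$ one obtains the analytic identity $\chi_\lambda(\tilde{\mathcal{K}}(z,w)) = h(z,w)^{\lambda p/n}$ on the branch with $h(0,0) = 1$, which expands as $1 + \lambda (p/n) \log h + O(\lambda^2)$. Applying $\partial_\lambda|_{\lambda=0}$ to both sides of \eqref{eq:Akey}: the holomorphic differential operators commute with $\partial_\lambda$, so the LHS yields $(p/n)\,(\iota D^{(z)})(\iota D^{(w)})^{\#} \log h(z,w)$; on the RHS the first term contributes $0$ because of its $\lambda^2$ prefactor, while the second contributes $-\tfrac{1}{2n}\,\Ad_{\mathfrak{p}^-}(\tilde{\mathcal{K}}(z,w))$. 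Solving for $\Ad_{\mathfrak{p}^-}(\tilde{\mathcal{K}})$ gives \eqref{eq:3.19}.

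The heart of the argument is the identity $c_1(\lambda) = -\lambda/(2n)$ in this specialization; it forces both sides of \eqref{eq:Akey} to vanish tautologically at $\lambda = 0$, so the entire content of \eqref{eq:3.19} sits in the linear-in-$\lambda$ term. I expect this to be the most delicate bookkeeping point, since it requires carefully tracking which pieces of the proof of Lemma \ref{lem 2.1} survive when $\alpha_0$ is trivial; the remaining manipulations (analyticity in $\lambda$ and the exponential identity for $\chi_\lambda(\tilde{\mathcal{K}})$) are purely formal.
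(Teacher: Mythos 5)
Your argument is correct, but it follows a genuinely different route from the paper. The paper proves \eqref{eq:3.19} in two geometric steps: it first shows that the right-hand side is quasi-invariant with multiplier $\Ad_{\mathfrak p^-}(\tilde{k}(g,z)^{-1})$, using the chain rule \eqref{eq:3.21} for $\iota D$ (from Remark \ref{rem:1.8}) and the fact that $\log h(gz,gw)$ differs from $\log h(z,w)$ by a holomorphic plus an antiholomorphic term; since the left-hand side has the same quasi-invariance by \eqref{eq:3.12} and $\tilde G$ acts transitively, it then suffices to check equality at $z=w=0$, which follows from the expansion \eqref{eq:3.17} and the elementary identity \eqref{eq:3.23} (this is also where the constant $-2p$ visibly comes from). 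You instead extract \eqref{eq:3.19} as the linear-in-$\lambda$ coefficient of Proposition \ref{prop:3.5} specialized to $\alpha=\mathbf 1$, $\beta=\Ad^\prime_{\mathfrak p^-}$. This is legitimate and non-circular: the identity \eqref{eq:3.14} is purely algebraic, resting only on Lemmas \ref{lem 1.6}, \ref{lem 2.1}, \ref{lem 2.2}, is stated for every real $\lambda$ with no unitarizability hypothesis, and the paper's proof of Proposition \ref{prop:3.5} nowhere uses Proposition \ref{prop:3.6}. Your two key specializations check out: $c_1(\lambda)=-\tfrac{\lambda}{2n}$ exactly, because the $\mathfrak k_{\rm ss}$-contribution in \eqref{Lemma2.1(1)} vanishes when $\alpha$ is trivial (so the constant $u$ of \eqref{2.8} is $0$ for this pair); and $\chi_\lambda(\tilde{\mathcal K}(z,w))=h(z,w)^{\lambda p/n}$, which is justified on the simply connected $\tilde K^{\mathbb C}$ with the branch of $\log h$ normalized at the origin, exactly the branch the statement uses. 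Differentiating the entire-in-$\lambda$ identity at $\lambda=0$ (where both sides vanish identically) is then unproblematic. In comparison, the paper's proof is self-contained and reduces everything to one point, while yours reuses the Section 2 machinery, gets the quasi-invariance for free because it is already encoded in Proposition \ref{prop:3.5}, and records as a byproduct the explicit value $c_1(\lambda)=-\tfrac{\lambda}{2n}$ for the scalar pair, a fact the paper does not state.
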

\begin{proof}
First we prove the quasi-invariance of the right hand side. We use the abbreviation $H(z,w) = \log h(z,w).$ Since $h(z,w)$ is quasi-invariant with holomorphic multiplier, we have 
\begin{equation}\label{eq:3.20}
(\iota D_1)(\iota D_2)^\# H(z,w) = \iota D^{(z)} (\iota D^{(w)})^\#\{H(g.z,g.w)\}.
\end{equation}
To compute the right hand side, we use a ``chain rule'' for $\iota D:$ 

\noindent
From $D^{(z)}\{f( g z)\} = (DF)(g z) \Ad_{\mathfrak{p}^+}(\tilde{k}(g,z))$ 
(cf. Remark \ref{rem:1.8}) we obtain, by \eqref{1.6old},
\begin{equation}\label{eq:3.21}
\iota D^{(z)} \{f ( g z)\} = \Ad_{\mathfrak p^-}(\tilde{k}(g,z)^{-1})(\iota Df)(g z).
\end{equation}
Applying this and using \eqref{eq:3.6} twice we obtain 
\begin{eqnarray*}
(\iota D^{(w)})^\#\{H(gz,gw)\} &=& (\iota D^{(w)}\{H(gw,gz)\})^\#\\
&=& \Ad_{\mathfrak p^-}(\tilde{k}(g,z)^{-1}) (\iota D_1 H) (g w, g z)\\
&=& \big ((\iota D_2)^\# H\big ) (g z, g w))\big (\Ad_{\mathfrak p^-}(\tilde{k}(g,z)^{-1})\big )^\#.
\end{eqnarray*}
When we apply $\iota D^{(z)}$ to this, we get 
exactly the quasi-invariance of $\iota D_1 (\iota D_2)^\# H$ with respect to $\Ad_{\mathfrak p^-}(\tilde{k}(g,z)^{-1}).$

By transitivity of $\tilde{G},$ this proves \eqref{eq:3.19} if we know that the two sides are equal for $z=0=w$.  
So, we evaluate at $z=w=0.$ On the left of \eqref{eq:3.20} we have $I_{\mathfrak p^-}$ since $\tilde{\mathcal{K}}(0,0) = e.$ On the right hand side, we use
\begin{equation}\label{eq:3.22}
-\iota D_1 (\iota D_2)^\#(\log h) = h^{-2}(\iota D h ) ((\iota D)^\# h ) - h^{-1}\iota D (\iota D)^\# h.
\end{equation}
Evaluating this at $(0,0)$ with the aid of \eqref{eq:3.17}, the first term gives $0$ and the second term gives $\tfrac{1}{2p} I_{\mathfrak p^-}$ by the easily checked identity
\begin{equation}\label{eq:3.23}
\iota D^{(z)} (\iota D^{(w)})^\# \langle z , w\rangle = I_{\mathfrak p^-}
\end{equation}
for all $z,w$ in $\mathfrak p^+.$ This completes the proof.
\end{proof}
\begin{thm}\label{thm:3.11}
Suppose $\mathcal D$ is the  Euclidean unit ball  in some $\mathbb C^n.$ If $\alpha$ is any irreducible representation of $\mathfrak{k}^\mathbb C_{\rm ss},$ $\beta$ an irreducible component of $\Ad_{\mathfrak{p}^-}^\prime \otimes \alpha,$ and $\lambda\in \mathcal W_c(\alpha),$ then $\lambda-1\in \mathcal W_c(\beta)$ and $P\iota D$ is a bounded operator from $\mathcal{H}^{(\alpha,\lambda)}$ to $\mathcal{H}^{(\beta,\lambda-1)}.$
\end{thm}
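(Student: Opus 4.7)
The plan is to reduce, via Corollary \ref{cor:3}, to the scalar (line-bundle) case and then verify that case by a direct computation on the ball that exploits the exceptionally simple form of $h(z,w)$ when $r=1$.

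For the reduction: on the ball each of $\mathcal W_c(\mathbf 1)$, $\mathcal W_c(\alpha)$, $\mathcal W_c(\Ad^\prime_{\mathfrak p^-})$ is a lower half-line of $\mathbb R$ (for the first, by \eqref{eq:3.18} with $r=1$). Given $\lambda\in\mathcal W_c(\alpha)$, I would choose $\lambda_0$ sufficiently negative so that both $\lambda_0\in\mathcal W_c(\mathbf 1)$ and $\lambda_0-1\in\mathcal W_c(\Ad^\prime_{\mathfrak p^-})$ hold, while $\lambda-\lambda_0\in\mathcal W_c(\alpha)$; Corollary \ref{cor:3} (applied with parameters $\lambda-\lambda_0$ and $\lambda_0$) then gives the theorem provided the scalar hypothesis holds for $\lambda_0$. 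Combining Corollary \ref{cor:2} with the identity $\chi_{-1}\otimes\Ad^\prime_{\mathfrak p^-}=\Ad_{\mathfrak p^-}$, that scalar hypothesis is equivalent to
$$\chi_{\lambda_0}(\tilde{\mathcal K}(z,w))\,Y_{zw}Y_{wz}^\#\;\prec\;C\,\chi_{\lambda_0}(\tilde{\mathcal K}(z,w))\,\Ad_{\mathfrak p^-}(\tilde{\mathcal K}(z,w)),$$
and since $\chi_{\lambda_0}(\tilde{\mathcal K})$ is a positive definite scalar kernel for $\lambda_0\in\mathcal W_c(\mathbf 1)$, multiplying by it preserves the ordering. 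Everything therefore reduces to the $\lambda_0$-independent matrix-valued kernel inequality
$$Y_{zw}Y_{wz}^\#\;\prec\;C\,\Ad_{\mathfrak p^-}(\tilde{\mathcal K}(z,w)).$$

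To verify this I would combine Proposition \ref{prop:3.6} with \eqref{eq:3.22}. Because $r=1$, the polynomial $h(z,w)=1-\tfrac{1}{2p}\langle z,w\rangle$ is exact, so \eqref{eq:3.23} gives $\iota D(\iota D)^\# h=-\tfrac{1}{2p}I_{\mathfrak p^-}$, and Proposition \ref{prop:3.6} yields
$$\Ad_{\mathfrak p^-}(\tilde{\mathcal K}(z,w))\;=\;2p\,h^{-2}(\iota Dh)((\iota D)^\# h)+h^{-1}I_{\mathfrak p^-},$$
a sum of two positive definite kernels. On the other hand, the decomposition \eqref{1.11} evaluated at $z=0$ gives $Y_{0,w}=-\bar w$, and Lemma \ref{lem 1.6} supplies the ODE $D_XY_{z,w}=\tfrac{1}{2}[Y_{z,w},[Y_{z,w},X]]$. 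In the matrix model of $\mathfrak{su}(n,1)$ a short computation shows that for $\eta\in\mathfrak p^-$ and $X\in\mathfrak p^+$ the double commutator $[\eta,[\eta,X]]$ is a scalar multiple of $\eta$, so this ODE collapses to a scalar equation along the line $\mathbb C\bar w\subset\mathfrak p^-$ and integrates to $Y_{z,w}=-\bar w/h(z,w)$ (for a suitable normalization of the Killing form). Since $\iota Dh(z,w)$ is likewise a scalar multiple of $\bar w$ independent of $z$, we conclude that $Y_{z,w}$ is a constant multiple of $h(z,w)^{-1}\iota Dh(z,w)$, and consequently
$$Y_{zw}Y_{wz}^\#\;=\;C'\,h^{-2}(\iota Dh)((\iota D)^\# h)$$
for some positive constant $C'$. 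This is exactly, up to a constant, the first summand of $\Ad_{\mathfrak p^-}(\tilde{\mathcal K}(z,w))$; since the second summand $h^{-1}I_{\mathfrak p^-}$ is itself positive definite, the target inequality follows with any sufficiently large $C$.

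The main obstacle is the explicit integration of the ODE from Lemma \ref{lem 1.6}; once $Y_{z,w}=-\bar w/h(z,w)$ is in hand, the remainder of the argument is a direct comparison of two explicit rank-one kernels.
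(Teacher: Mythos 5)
Your overall strategy coincides with the paper's: reduce to the scalar case via Corollary \ref{cor:3} (the paper chooses $\lambda'\in(\lambda,\lambda_\alpha)$ and $\lambda_0=\lambda-\lambda'<0$; note your phrase ``sufficiently negative'' should really be ``any $\lambda_0$ with $\lambda-\lambda_\alpha<\lambda_0<0$'', since making $\lambda_0$ too negative destroys $\lambda-\lambda_0\in\mathcal W_c(\alpha)$), and then settle the scalar case on the ball by a kernel inequality resting on Proposition \ref{prop:3.6}, \eqref{eq:3.22}, \eqref{eq:3.23} and the fact that for $r=1$ the expansion of $h$ stops at bidegree $(1,1)$. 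Where you genuinely differ is the packaging of the scalar case: the paper differentiates $h^{-\ell}$ directly and compares with $Ch^{-\ell}\Ad_{\mathfrak p^-}(\tilde{\mathcal K})$ using the $\lambda$-dependent constant $C=\ell(\ell+1)/p$ (see \eqref{eq:3.24}--\eqref{eq:3.26}), whereas you isolate the $\lambda$-independent inequality $Y_{zw}Y_{wz}^\#\prec C\,\Ad_{\mathfrak p^-}(\tilde{\mathcal K})$ and verify it through the explicit rank-one formula $Y_{z,w}=-\bar w/h(z,w)$ (your derivation of this from Lemma \ref{lem 1.6} is correct: in the rank-one matrix model $[\eta,[\eta,X]]$ is indeed proportional to $\eta$, and the integrated formula agrees with the direct $P^+\tilde K^{\mathbb C}P^-$ decomposition of $\exp(-\bar w)\exp z$); Schur-multiplying by the positive definite scalar kernel $\chi_{\lambda_0}(\tilde{\mathcal K})$ and invoking Corollary \ref{cor:2} then recovers the hypotheses of Corollary \ref{cor:3}. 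This is a legitimate and somewhat cleaner formulation, since the bound becomes uniform in $\lambda_0$.

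There is, however, one incorrect assertion in your verification: the first summand $2p\,h^{-2}(\iota Dh)\bigl((\iota D)^\#h\bigr)$ of $\Ad_{\mathfrak p^-}(\tilde{\mathcal K})$ is \emph{not} a positive definite kernel when $n\geq 2$. It has the form $F(w)F(z)^\#$ with the antiholomorphic vector factor on the left and the holomorphic covector on the right (concretely, in the $\mathfrak{su}(n,1)$ model it is $h(z,w)^{-2}$ times the operator $y\mapsto (yz)w^\#$), which is the \emph{transpose} arrangement of a positive kernel; taking two points $re_1,re_2$ and suitable vectors one gets a negative value of the associated form. Consequently the closing inference ``the target inequality follows with any sufficiently large $C$'' is not justified as stated: enlarging $C$ beyond the matching value would require positivity of that summand (or of $\Ad_{\mathfrak p^-}(\tilde{\mathcal K})$ itself, which has not been established at that point). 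The repair is immediate and is exactly the cancellation the paper engineers: since $Y_{zw}Y_{wz}^\#=(2p)^2h^{-2}(\iota Dh)\bigl((\iota D)^\#h\bigr)$, choose $C=2p$ precisely, so that the rank-one terms cancel and
\begin{equation*}
C\,\Ad_{\mathfrak p^-}(\tilde{\mathcal K}(z,w))-Y_{zw}Y_{wz}^\# \;=\; 2p\,h(z,w)^{-1}I_{\mathfrak p^-}\;\succ\;0 .
\end{equation*}
With this one-line correction your argument is complete and parallels the paper's proof, where the choice $C=\ell(\ell+1)/p$ plays the same role of eliminating the non-positive rank-one contribution and leaving a positive multiple of $h^{-\ell-1}I_{\mathfrak p^-}$.
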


\begin{proof}
When $\mathcal{D}$ is the Euclidean ball, we have $r=1.$ We first prove the Theorem in the special case of $\alpha =\mathbf 1.$ For a more convenient parameter, we write 
$\ell = -\tfrac{p}{n} \lambda.$ By Proposition \ref{prop:3.2}, we have to prove only that if $\lambda\in \mathcal W_c(\mathbf 1),$ i.e. by \eqref{eq:3.18} if $\lambda < 0,$ i.e. if $\ell >0,$ then 
\begin{equation}\label{eq:3.24}
\iota D^{(z)} (\iota D^{(w)})^\#\{h(z,w)^{-\ell}\} \prec C h(z,w)^{-\ell} \Ad_{\mathfrak p^-}(\tilde{\mathcal K}(z,w))
\end{equation}
for some $C.$ The left hand side here equals 
\begin{equation}\label{eq:3.25}
\ell(\ell+1) h^{-\ell -2} (\iota D h)\big ( (\iota D)^\# h\big ) - \ell h^{-\ell -1} \big ( \iota D (\iota D)^\# h\big ),
\end{equation}
and we have a similar expression for the right hand side from \eqref{eq:3.19} and \eqref{eq:3.22}. It follows that choosing $C=\tfrac{\ell(\ell+1)}{p}$ we have
\begin{equation}\label{eq:3.26}
C h(z,w)^{-\ell} \Ad_{\mathfrak p^-}(\tilde{\mathcal K}(z,w)) - 
\iota D^{(z)} (\iota D^{(w)})^\# 
\{ h(z,w)^{-\ell} \} = -\ell^2 h(z,w)^{-\ell -1} \iota D(\iota D)^\# h(z,w).
\end{equation}
Since $r=1,$ the expansion \eqref{eq:3.17} ends with the term of bidegree $(1,1).$ So, because of \eqref{eq:3.23}, the right hand side of \eqref{eq:3.26} is $\tfrac{1}{2p}\ell^2 h(z,w)^{-\ell -1} I_{\mathfrak{p}^-},$ which is positive definite. This proves \eqref{eq:3.24} and the case $\alpha = \mathbf 1$  of the Theorem.

To prove the general case, suppose $\lambda\in \mathcal W_c(\alpha),$  i.e. $\lambda < \lambda_\alpha.$ We choose $\lambda^\prime$ such that $\lambda< \lambda^\prime <  \lambda_\alpha.$ Then $\lambda = \lambda^\prime + \lambda_0,$ with $\lambda_0 < 0,$ i.e. $\lambda_0 \in \mathcal W_c(\mathbf{1}).$ We now apply Corollary \ref{cor:3} with $\lambda^\prime$ in place of $\lambda,$ and get the general statement of our theorem.
\end{proof}
\section{Homogeneous Cowen-Douglas tuples}
We will be mostly concerned with the modified Cowen-Douglas class $\hat{B}_k(\mathcal D)$ which has all the basic geometric properties of the original Cowen-Douglas class but is easier to handle (see \cite[Remark p. 5]{DM}).  To recall the definitions, let $\mathcal D \subseteq \mathbb C^m$ be an arbitrary domain, and let $\mathcal H\subset \Hol(\mathcal D, \mathbb C^k)$ be a Hilbert space containing all the $\mathbb C^k$ - valued polynomials as a dense set and having a reproducing kernel $K=K(z,w).$ Suppose also that the operators $M_j,$ defined by $(M_j)f(z) = z_j f(z)$ preserve $\mathcal H$ and are bounded on it.  An $n$-tuple 
$(T_1, \ldots, T_m)$ of commuting bounded operators on any Hilbert space $H$  is said to belong to  $\hat{B}_k(\mathcal D)$ if there is a unitary isomorphism of $H$  onto $\mathcal H$ which carries $T_j$ to $M_j^*$ for each $j=1, \ldots,m.$ From now on we write $V$ in place of 
$\mathbb C^k,$ this is more convenient for what follows.  We keep assuming that $V$ has an inner product $\langle \cdot , \cdot \rangle$ (corresponding to the standard inner product in $\mathbb C^k$). 

The original Cowen-Douglas class ${B}_k(\mathcal D)$ (see \cite{CD, CS}) can be characterized in a similar way, with the requirement of density of polynomials replaced by the condition that the range of $\oplus_{j=1}^m (M_j^* -\bar{w}_j)$ mapping $\mathcal H$ into $\mathcal H \oplus \cdots \oplus \mathcal H$ is closed for all $w\in \mathcal D$. For the precise relationship between these classes, see  \cite{CS} and \cite{AS}. 

The essential fact about $\hat{B}_k(\mathcal D)$ (and about ${B}_k(\mathcal D)$ as well) is that the joint eigenspace $F_z$ of the operators $M_j^*$  for eigenvalue $\bar{z}_j$ is, for all $z\in \mathcal D,$ $k$ dimensional and equal to $\{K_z v: v \in V\}.$

The spaces   $F_z$ with their inner product inherited form $\mathcal H$ form the fibres of a Hermitian anti-holomorphic vector bundle $F$ over $\mathcal D$. 
In a natural way, the space $\mathcal H$ is the space of sections of the complex antidual $E$ of $F$, which is a Hermitian holomorphic vector bundle.  In the trivialization the fibre $E_z$ becomes $V$ with the inner product $\langle K(z,z)^{-1}\cdot, \cdot \rangle$.   

It is a fundamental result (\cite{CD, CS, AS}) that the unitary equivalence class of elements of $\hat{B}_k(\mathcal D)$ (and also of ${B}_k(\mathcal D)$) and the  corresponding isomorphism class of holomorphic Hermitian vector bundles mutually determine each other.  

When $\mathcal D$ is a bounded symmetric domain and $H$ any Hilbert space, one calls an $n$-tuple $T=(T_1, \ldots, T_n)$ of commuting bounded operatos homogeneous (cf. \cite{MS, BM}) if their joint Taylor spectrum is contained in $\overbar{\mathcal D}$ 
and for every holomorphic automorphism $g$ of $\mathcal D,$ there exists a unitary operator $U_g$ such that 
$$
g (T_1, \ldots ,T_n) = (U_g^{-1} T_1 U_g, \ldots , U_g^{-1} T_n U_g),
$$   
or more briefly
\begin{equation}\label{eqn:4.1} 
g(T)_i = U_g^{-1} T_i U_g\,\,\,\, (1\leq i \leq n). 
\end{equation}

A description of all homogeneous $n$- tuples in $B_1(\mathcal D),$ when $\mathcal D$ is a domain of classical type is in \cite{BM, MS}, for arbitrary $\mathcal D$ it is in  \cite{AZ}. 
When $\mathcal D$ is the unit disc in $\mathbb C$, a complete description of all homogeneous operators in $B_k(\mathcal D)$  is in \cite{KM}.  It is easily seen that the answer is the same  for  $\hat{B}_k(\mathcal D)$. 
For a large subclass of $B_k(\mathcal D)$ for arbitrary $\mathcal D$, there are precise results in \cite{MU}. 

%

Here we prove some simple results about the most general case, then specialize to the case of the unit ball in $\mathbb C^n$ and prove the main results of this section.  

%
\begin{thm} \label{thm:4.1}
Let $\mathcal D \subseteq \mathbb C^n$ be an irreducible bounded symmetric domain. An irreducible Hermitian holomorphic vector bundle $E$ over $\mathcal D$ corresponds to a homogeneous $n$-tuple in $\hat{B}_k(\mathcal D)$ for some $k$  if and only if it is homogeneous under $\tilde{G}$ and its Hermitian structure comes from a regular unitary structure $\mathcal H$ such that each  multiplication operator $M_i,\,1\leq j \leq n,$ preserves $\mathcal H$ and is  bounded.  
\end{thm}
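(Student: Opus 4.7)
The plan is to prove the equivalence by treating the two directions separately and arguing through the standard dictionary between Cowen--Douglas tuples and their Hermitian eigenspace bundles.

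For the $(\Leftarrow)$ direction, suppose $E = E^\varrho$ carries a regular unitary structure $\mathcal H \subseteq \Hol(\mathcal D, V)$ on which each $M_i$ is bounded. First I would verify the three ingredients of $\hat B_k(\mathcal D)$: the $M_i$ are bounded and commute, $\mathscr P$ is dense in $\mathcal H$ (built into the definition of regular unitary structure, or see the $(\mathfrak g, \tilde K)$-module argument from Section~3), and $\mathcal H$ has a reproducing kernel $K$ (by the Cauchy estimates applied to the point-evaluation maps, using Proposition \ref{prop:3.3}). The standard identity $M_i^* K_w v = \bar w_i\, K_w v$ then shows that the joint eigenspace of $(M_1^*, \dots, M_n^*)$ at $\bar w$ is $\{K_w v : v \in V\}$, which is $k$-dimensional with $k = \dim V$. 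Thus $T = (M_1^*, \dots, M_n^*)$ lies in $\hat B_k(\mathcal D)$, and its associated Hermitian bundle, with fibre $V$ carrying the inner product $\langle K(w,w)^{-1}\cdot,\cdot\rangle$, matches $E^\varrho$ by the very definition of "comes from a regular unitary structure."

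For the homogeneity of the tuple $T$, for each holomorphic automorphism $g$ of $\mathcal D$ I would pick a lift $\tilde g \in \tilde G$ (possible since $\tilde G$ covers the identity component of $\Aut(\mathcal D)$) and set $U_g := U^\varrho_{\tilde g}$, which is unitary because $\mathcal H$ is a regular unitary structure. Using the explicit form \eqref{1.13} together with the multiplier identity \eqref{tildeb}, a direct computation yields
\[
U^\varrho_{\tilde g}\, M_i\, (U^\varrho_{\tilde g})^{-1} = M_{h_i}, \qquad h_i(z) = (g^{-1} z)_i,
\]
and taking adjoints, interpreted via the Taylor functional calculus on the eigenbundle, gives exactly the required intertwining \eqref{eqn:4.1}.

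For the $(\Rightarrow)$ direction, suppose $E$ corresponds to a homogeneous $T \in \hat B_k(\mathcal D)$. By the definition of $\hat B_k(\mathcal D)$ we may assume $T_i = M_i^*$ on a Hilbert space $\mathcal H \subseteq \Hol(\mathcal D, V)$ containing $\mathscr P$ densely, with reproducing kernel $K$ and bounded multiplication operators. The hypothesis provides, for each $g \in \Aut(\mathcal D)$, a unitary $U_g$ on $\mathcal H$ satisfying \eqref{eqn:4.1}. I would then organize these unitaries into a unitary representation of $\tilde G$: the irreducibility of the bundle $E$ lets us invoke Schur's lemma to conclude $U_g$ is determined up to a unimodular scalar, which produces a priori a projective representation of the identity component of $\Aut(\mathcal D)$; pulling back along the covering $\tilde G \to \Aut_0(\mathcal D)$ and using that $\tilde G$ is simply connected, this projective representation trivialises to an honest unitary representation $U$ of $\tilde G$ on $\mathcal H$. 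The main obstacle in the proof lies precisely here, in showing that the cocycle obstruction vanishes and that $U$ is strongly continuous; once this is done, $U$ is implemented by a multiplier acting on $\Hol(\mathcal D, V)$, which makes $E$ into a $\tilde G$-homogeneous Hermitian holomorphic vector bundle (hence $E = E^\varrho$ for some indecomposable Hermitian $\varrho$), and $\mathcal H$ is by construction a $\tilde G$-invariant Hilbert space of sections containing $\mathscr P$ on which each $M_i$ is bounded, i.e.\ a regular unitary structure whose associated Hermitian structure $K(0,0)^{-1}$ is the given one on $E$.
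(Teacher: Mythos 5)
Your $(\Leftarrow)$ direction is essentially the paper's own argument: boundedness, density of polynomials and the reproducing kernel give a tuple in $\hat{B}_k(\mathcal D)$, homogeneity of $(M_1,\dots,M_n)$ follows from the multiplier computation with $U^\varrho$, and the passage from $M$ to $M^*$ via $g\mapsto \bar g$ is the same well-known fact the paper invokes; no objection there.

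The $(\Rightarrow)$ direction, however, contains a genuine gap, and you have flagged it yourself. The hypothesis only provides, for each individual $g\in\Aut(\mathcal D)$, \emph{some} unitary $U_g$ satisfying \eqref{eqn:4.1}; it gives no coherence, measurability or continuity in $g$. Your plan -- Schur's lemma to pin $U_g$ down up to a unimodular scalar (note this needs triviality of the commutant of the tuple, i.e. irreducibility of the tuple, not merely of the bundle, though the two are linked by Cowen--Douglas theory), hence a projective unitary representation of $\Aut_0(\mathcal D)$, then a lift to an honest representation of $\tilde G$ -- stalls exactly at the decisive point: simple connectedness of $\tilde G$ by itself does not trivialize a projective unitary representation (one needs the vanishing of the relevant second cohomology and, before that, at least Borel measurability or continuity of $g\mapsto U_g$, which is not established and is not automatic since the $U_g$ are chosen pointwise with no regularity). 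Writing ``the main obstacle lies precisely here \dots once this is done'' defers the only non-trivial step, so the theorem is not proved. The paper bypasses the construction of a representation altogether: it shows that $U_{\bar g}$ carries each joint eigenspace $F_z$ of $(M_1^*,\dots,M_n^*)$ onto $F_{g(z)}$, hence defines a bundle automorphism of the eigenvector bundle $F$ (and of its anti-dual $E$) projecting to $g$, and then quotes Theorem 2.1 of \cite{KM}, which asserts that the existence of such an automorphism over every $g\in\Aut(\mathcal D)$ already forces homogeneity of the Hermitian holomorphic bundle; all the continuity and group-action difficulties your proposal runs into are packaged in that cited result. To make your route work you would essentially have to reprove that theorem (or establish the continuity and cocycle statements directly), which is precisely the work your proposal omits.
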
  
\begin{proof} For the ``if'' part: By Theorem \ref{thm:3.4} $E$ is an  $E^y$ and $\mathcal H$ is an $\mathcal H^y_\mu$ with some $y$ and $\mu$. The  polynomials are dense in $\mathcal H^y_\mu$ and it has the reproducing kernel $K^y_\mu$. By hypothesis,  $(M_1^*, \ldots , M_n^*)$ is a well-defined $n$- tuple in $\hat{B}_k(\mathcal D)$. We have to prove that $M^*$ is homogeneous; for this, it is enough to prove that $M$ is homogeneous. (As is well-known, if \eqref{eqn:4.1} holds for $T$ with $U_g$, then it holds also for $T^*$ with $U_{\overbar{g}}$, where $\overbar{g}$ is defined by $\overbar{g}(z) = \overline{g({\overbar{z})}}$ and $\overbar{z}$ is the ordinary complex conjugation.) Now $U=U^y$ acts on $\mathcal H^y_\mu$ via a multiplier $m(g,z)$, and we have 
\begin{eqnarray*}
\big ( M_iU_gf \big )(z) &=& z_im(g^{-1},z)^{-1} f (g^{-1}z)\\
\big (U_g\, g(M)_i f \big )(z) &=& m(g^{-1},z)^{-1} \big ( g(\zeta)_i f (\zeta)\big )_{\zeta = g^{-1}(z)}.
\end{eqnarray*}
The two expressions being equal, $M$ is homogeneous. 

In proving the converse, $\mathcal H$ is given with reproducing kernel $K$,  polynomials dense, and $M^*$ homogeneous.  As recalled above, the joint $\overbar{z}$- eigenspaces $F_z$ of $M^*$ form a bundle $F$ and $E$ is the anti-dual of $F$.  We must prove that $E$ is homogeneous. By \cite[Theorem 2.1]{KM}, for this it is enough to prove that for every $g\in \Aut(\mathcal D)$, there exists an automorphism of $E$ acting on $\mathcal D$ as $g$ (i.e. a bundle map $E\to E$ projecting to $g$).  For this, in turn,  it is enough to prove that $F$ has a similar property.

For all $g$ in $\Aut(\mathcal D)$ we have by hypothesis a unitary operator $U_g$ on $\mathcal H$ intertwining $M^*$ and $g(M^*)$. We show that $U_{\overbar{g}}$ maps each $F_z$ (which is a subspace of $\mathcal H$) linearly onto $F_{g(z)}$. This will give the desired automorphism of $F$. So, let $f\in F_z$, i.e. $M_i^*f = \overbar{z}_i f$, ($1\leq i \leq n$). We have 
$$M_i^* U_{\overbar{g}}f =  U_{\overbar{g}}\, \overbar{g}(M^*)_i f = U_{\overbar{g}} \overbar{g}(\overbar{z})_i f = \overline{g(z)_i}U_{\overbar{g}} f,$$ 
which shows $U_{\overbar{g}} f \in F_{g(z)}$. Doing the same with $g^{-1}$, we see that $U_{\overbar{g}}$ gives a vector space isomorphism $F_z \to F_{g(z)}$, hence an automorphism of $F$.   \end{proof}
  

The following corollary is immediate from the last statement of Theorem \ref{thm:3.4}.
\begin{cor}
For a regularly unitarizable irreducible Hhhvb the boundedness of  $M_i$  holds either for all or none of the regular unitary structures. If it holds, then the corresponding commuting tuples of multiplication operators are all similar.
\end{cor}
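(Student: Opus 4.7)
The plan hinges entirely on the last assertion of Theorem~\ref{thm:3.4}: for a fixed $y$ (and $\lambda$), all the regular unitary structures $\mathcal H^y_\mu$ coincide as subsets of $\Hol(\mathcal D,V)$, and their Hilbert norms are mutually equivalent. Once this is in hand, the corollary reduces to elementary functional analysis.

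First I would observe that boundedness of a given linear map between normed vector spaces is a property only of the equivalence class of the norm. The multiplication operator $M_i$ is literally the \emph{same} linear map on the common underlying space for every choice of $\mu$ (namely $f\mapsto z_i f$). Therefore its operator norm with respect to $\|\cdot\|_\mu$ is finite if and only if it is finite with respect to $\|\cdot\|_{\mu'}$, which proves the first (dichotomy) statement.

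For the similarity claim, I would consider the identity map $I_{\mu,\mu'}:\mathcal H^y_\mu\to \mathcal H^y_{\mu'}$ of the common underlying set. By the equivalence of the two Hilbert norms, $I_{\mu,\mu'}$ and its set-theoretic inverse $I_{\mu',\mu}$ are both bounded, so $I_{\mu,\mu'}$ is a bounded invertible operator between the two Hilbert spaces. Since on both spaces each $M_i$ acts by the same pointwise multiplication formula, the relation $I_{\mu,\mu'}\,M_i = M_i\,I_{\mu,\mu'}$ holds trivially for every $i=1,\dots,n$. Thus a single bounded invertible operator simultaneously intertwines the two commuting $n$-tuples, which is precisely the definition of similarity.

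I do not anticipate any substantial obstacle; the entire difficulty has already been absorbed into Theorem~\ref{thm:3.4}, and what remains is the observation that equivalent Hilbert norms on a common vector space automatically realize the identity as a bounded invertible operator intertwining every common multiplication operator.
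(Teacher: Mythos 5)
Your argument is correct and is exactly the paper's route: the paper declares the corollary immediate from the last statement of Theorem \ref{thm:3.4} (all $\mathcal H^y_\mu$ coincide as sets with equivalent norms), and your proposal simply spells out the elementary consequences — norm-equivalence preserves boundedness of the fixed multiplication maps, and the identity map is a bounded invertible operator trivially intertwining the tuples, hence a similarity.
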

For general $\mathcal D$, the following proposition provides a sufficient condition.

\begin{prop}\label{prop:4.2}
Let $\mathcal D \subseteq \mathbb C^n$ be an irreducible bounded symmetric domain and let $E^y= E^{\alpha, \lambda}$ 
be an irreducible Hhhvb. We write
$$
\lambda_y = \,\, {\stackrel{\rm min}{\scriptstyle{0\,\leq j \leq m}}\,\, \stackrel{\rm min~~}{\scriptstyle{\alpha \in A_j}} \stackrel{{\textstyle \lambda_\alpha}}{}}.
$$
If $\lambda < \lambda_y - \tfrac{n}{p}(r-1)\tfrac{a}{2},$ then $E^y$ is regularly unitarizable. Each one of the Hermitian structures on $E^y$ obtained in this way corresponds to a homogeneous  tuple  in some $\hat{B}_k(\mathcal D)$. 
\end{prop}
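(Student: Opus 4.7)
The plan is to treat the two assertions in sequence. For regular unitarizability, I invoke Theorem~\ref{thm:3.4}: the hypothesis $\lambda < \lambda_y - \tfrac{n}{p}(r-1)\tfrac{a}{2}$ yields a fortiori $\lambda < \lambda_\alpha + j$ for every $\alpha \in A_j$, $0 \le j \le m$, since $\lambda_y \le \lambda_\alpha$ by definition and $-\tfrac{n}{p}(r-1)\tfrac{a}{2} \le 0 \le j$. Hence $E^y$ is regularly unitarizable, and Theorem~\ref{thm:3.4} produces the family of regular unitary structures $\mathcal H^y_\mu$ together with the intertwining differential operator $\Gamma = \Gamma^{y,\lambda}$ of Theorem~\ref{thm 2.5n}.

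For the second claim I appeal to Theorem~\ref{thm:4.1}, which reduces matters to showing that each coordinate multiplication operator $M_i$, $1 \le i \le n$, preserves $\mathcal H^y_\mu$ and is bounded on it. By the last statement of Theorem~\ref{thm:3.4} boundedness is independent of $\mu$, so I fix one. My strategy is to transfer the question to $\mathcal H^0_\mu$. There $M_i$ respects the direct sum decomposition $\mathcal H^0_\mu = \bigoplus_{j,\alpha} \mathcal H^{(\alpha, \lambda - j)}_{\mu_{j \alpha}}$, and on each irreducible holomorphic-discrete-series summand multiplication by the bounded holomorphic function $z_i$ is a bounded operator: by Proposition~\ref{prop:3.2} this amounts to the kernel domination $z_i\, K^{(\alpha, \lambda-j)}(z,w)\, \bar w_i \preceq \|z_i\|_\infty^2 K^{(\alpha, \lambda-j)}(z,w)$, a standard fact for weighted Bergman spaces on bounded domains.

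The substantive step is the transfer itself. I would write $\Gamma = I + N$, where by Theorem~\ref{thm 2.5n} the matrix $N$ is strictly lower triangular with $(\ell, j)$-block ($\ell > j$) given by $\sum_{\boldsymbol \alpha \in A_{j\ell}(\alpha,\beta)} c_{\ell j}^{\boldsymbol \alpha}\, y^{\boldsymbol \alpha} \otimes D^{\boldsymbol \alpha}$. Since $N^{m+1} = 0$ one has $\Gamma^{-1} = \sum_{k=0}^m (-N)^k$, so as soon as every block of $N$ is bounded as an operator between the corresponding summands of $\mathcal H^0_\mu$, both $\Gamma$ and $\Gamma^{-1}$ are bounded on $\mathcal H^0_\mu$, and consequently $\mathcal H^y_\mu = \mathcal H^0_\mu$ as sets with equivalent Hilbert norms. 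Block boundedness in turn decomposes into the boundedness of each factor $P_{\alpha_k, \alpha_{k+1}} \iota D$ from $\mathcal H^{(\alpha_k, \lambda - k)}$ to $\mathcal H^{(\alpha_{k+1}, \lambda - k - 1)}$ along filiform sequences; by Corollary~\ref{cor:3}, writing $\lambda - k = \lambda' + \lambda_0$ with $\lambda' < \lambda_{\alpha_k}$ and $\lambda_0 < \lambda_{\mathbf 1} = -\tfrac{n}{p}(r-1)\tfrac{a}{2}$, this reduces to the scalar case. Such a splitting exists precisely when $\lambda - k < \lambda_{\alpha_k} + \lambda_{\mathbf 1}$, and the uniform version $\lambda < \lambda_y + \lambda_{\mathbf 1}$ is exactly the hypothesis of the Proposition. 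Theorem~\ref{thm:4.1} then gives the Cowen--Douglas conclusion.

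The main obstacle is thus to ensure the scalar-case input of Corollary~\ref{cor:3}, namely that $\iota D : \mathcal H^{(\mathbf 1, \lambda_0)} \to \mathcal H^{(\Ad^\prime_{\mathfrak p^-}, \lambda_0 - 1)}$ is bounded throughout the range $\lambda_0 < \lambda_{\mathbf 1}$. For $\mathcal D$ the Euclidean ball this is delivered by Theorem~\ref{thm:3.11}; for a general irreducible bounded symmetric domain it rests on scalar reproducing-kernel estimates of the type furnished by Proposition~\ref{prop:3.5} in the highly-weighted regime, and this is the technical heart of the argument.
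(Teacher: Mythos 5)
Your first step (regular unitarizability via Theorem \ref{thm:3.4}) is fine and agrees with the paper. The trouble is in the boundedness of the coordinate multipliers, where your argument has two genuine gaps, and the paper's proof is built precisely to avoid both. First, you dispose of $M_i$ on each summand $\mathcal H^{(\alpha,\lambda-j)}_{\mu_{j\alpha}}$ by the kernel domination $z_iK^{(\alpha,\lambda-j)}(z,w)\bar w_i \prec \|z_i\|_\infty^2\,K^{(\alpha,\lambda-j)}(z,w)$, calling it a standard fact for weighted Bergman spaces. It is not: the scalar factor $c^2-z_i\bar w_i$ is not itself a positive definite kernel, so this domination does not follow from positivity of $K^{(\alpha,\lambda-j)}$; and for $\lambda-j$ in the analytic-continuation range $\mathcal W_c(\alpha)$ these spaces are in general not weighted Bergman spaces (the inner product need not be an integral over $\mathcal D$), so boundedness of $M_i$ there is exactly the nontrivial scalar input that the paper imports from \cite{AZ} --- and only for line bundles with parameter $\lambda_0<-\tfrac{n}{p}(r-1)\tfrac{a}{2}$. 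The extra term $-\tfrac{n}{p}(r-1)\tfrac{a}{2}$ in the hypothesis exists precisely to make room for that restricted scalar fact; if your ``standard fact'' held throughout $\mathcal W_c$, the proposition would have no gap in $\lambda$ at all. Second, your transfer from $\mathcal H^0_\mu$ to $\mathcal H^y_\mu$ rests on the set equality $\mathcal H^y_\mu=\mathcal H^0_\mu$, which needs boundedness of the gradient factors $P\iota D$, i.e.\ Theorem \ref{thm:3.11}; that theorem is proved only for the Euclidean ball, and you acknowledge the general case is open. Since Proposition \ref{prop:4.2} is asserted for an arbitrary irreducible bounded symmetric domain, your argument does not close.

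The paper's route is different and sidesteps both issues. It splits $\lambda=\lambda_0+\lambda'$ with $\lambda_0<-\tfrac{n}{p}(r-1)\tfrac{a}{2}$ and $\lambda'<\lambda_y$, writes $E^{y,\lambda}=L_{\lambda_0}\otimes E^{y,\lambda'}$, and takes the tensor-product regular unitary structure $H\otimes\mathcal H^{(y,\lambda')}_\mu$, whose reproducing kernel is $h(z,w)^{\frac{p}{n}\lambda_0}K^{(y,\lambda')}_\mu(z,w)$. By \cite{AZ} and Proposition \ref{prop:3.2}, $(c^2-z_i\bar w_i)\,h(z,w)^{\frac{p}{n}\lambda_0}\succ 0$; multiplying this scalar positive definite kernel by the positive definite kernel $K^{(y,\lambda')}_\mu$ preserves positivity, so $M_i$ is bounded on the tensor-product structure, and the last statement of Theorem \ref{thm:3.4} then transfers boundedness to every regular unitary structure on $E^{y,\lambda}$, after which Theorem \ref{thm:4.1} applies. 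Note that this never uses $\Gamma$, never needs $M_i$ bounded on the individual factors $\mathcal H^{(\alpha,\lambda-j)}$, and never needs Theorem \ref{thm:3.11}; the only nonelementary inputs are the scalar result of \cite{AZ} and Schur-type positivity of products of kernels. If you want to salvage your approach, it essentially reproduces the proof of Theorem \ref{thm:4.2} and is limited to the ball, where the stronger Corollary \ref{rem:4.3} already holds without the $-\tfrac{n}{p}(r-1)\tfrac{a}{2}$ margin.
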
  
\begin{proof}
We choose $\lambda_0 < - \tfrac{n}{p}(r-1)\tfrac{a}{2}$ such that $\lambda^\prime = \lambda - \lambda_0 < \lambda_y$. 
So $E^{y,\lambda} = L_{\lambda_0} \otimes E^{y,\lambda^\prime}$. Choosing some $\mu = \{\mu_{j\alpha} \}$, we have the regular unitary structure $\mathcal H_\mu^{(y, \lambda^\prime)}$ on $E^{y,\lambda^\prime}$.  By \eqref{eq:3.18} we can also choose a regular unitary structure $H$ on $L_{\lambda_0}$. Now  $H\otimes \mathcal H_\mu^{(y, \lambda^\prime)}$ is a regular unitary structure on $E^{\lambda,y}$ with reproducing kernel $h(z,w)^{\tfrac{p}{n} \lambda_0}K^{(y, \lambda^\prime)}_\mu(z,w)$.  As proved in \cite{AZ}, each $M_i$ is bounded on $H$. So by our Proposition \ref{prop:3.2}, 
$$ (c^2 - z_i\bar{w}_i) h(z,w)^{\tfrac{p}{n} \lambda_0} \succ 0 $$
with some $c >0$. It follows that 
$$ (c^2 - z_i\bar{w}_i) h(z,w)^{\tfrac{p}{n} \lambda_0} K_\mu^{(y,\lambda^\prime)}(z,w) \succ 0.$$ 
which shows that $M_i$ is bounded on $H\otimes \mathcal H_\mu^{(y, \lambda^\prime)}$ (again by Proposition \ref{prop:3.2}). But then the last statement of Theorem \ref{thm:3.4} implies that $M_i$ is bounded on any of the regular structures of $E^{y,\lambda}$. 
\end{proof}

\begin{cor} \label{rem:4.3}
When $\mathcal D$ is the Euclidean unit ball in $\mathbb C^n$,  every Hermitian hhvb whose Hermitian metric comes from a regular unitary structure corresponds to a homogeneous tuple in $\hat{B}_k(\mathcal D)$ for some $k$.
\end{cor}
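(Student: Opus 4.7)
The plan is to invoke Theorem~\ref{thm:4.1}, which requires verifying that each multiplication operator $M_i$ acts boundedly on the given regular unitary structure. By Theorem~\ref{thm:3.4}, such a structure has the form $\mathcal H^y_\mu = \Gamma^{y,\lambda}\mathcal H^0_\mu$ with $\mathcal H^0_\mu = \bigoplus_{j,\alpha}\mathcal H^{(\alpha,\lambda-j)}_{\mu_{j\alpha}}$, where $\Gamma^{y,\lambda}$ is the explicit differential operator of Theorem~\ref{thm 2.5n}; moreover the regular unitarizability gives $\lambda-j<\lambda_\alpha$ for every $\alpha\in A_j$, $0\le j\le m$. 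I would first establish boundedness of $M_i$ on $\mathcal H^0_\mu$, then transfer it to $\mathcal H^y_\mu$ via $\Gamma^{y,\lambda}$.

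For the first step, boundedness on $\mathcal H^0_\mu$ reduces componentwise to each summand $\mathcal H^{(\alpha,\lambda-j)}$. Applying Proposition~\ref{prop:4.2} to the irreducible case ($m=0$) yields the condition $\lambda-j<\lambda_\alpha-\tfrac{n}{p}(r-1)\tfrac{a}{2}$; for the Euclidean ball $r=1$, so the correction term vanishes and this is exactly the hypothesis already in hand. Hence $M_i$ is bounded on every irreducible summand, and therefore on the direct sum $\mathcal H^0_\mu$ equipped with its natural norm.

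The key step is showing that $\Gamma^{y,\lambda}$ and its inverse are bounded as endomorphisms of $\mathcal H^0_\mu$. The formula in Theorem~\ref{thm 2.5n} exhibits $\Gamma^{y,\lambda}$ as a lower triangular array whose off-diagonal entries are compositions $D^{\boldsymbol\alpha}=(P_{\alpha_{i-1},\alpha_i}\iota D)\cdots(P_{\alpha_j,\alpha_{j+1}}\iota D)$ with the identity on the diagonal. Theorem~\ref{thm:3.11}, the ball-specific boundedness result, says precisely that each factor $P\iota D$ is a bounded map $\mathcal H^{(\alpha_k,\lambda-k)}\to\mathcal H^{(\alpha_{k+1},\lambda-k-1)}$. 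Iterating, every entry of $\Gamma^{y,\lambda}$ is bounded between the appropriate summands, so $\Gamma^{y,\lambda}$ is bounded; triangularity with identity diagonal makes the inverse bounded as well. Consequently $\mathcal H^y_\mu=\mathcal H^0_\mu$ as sets of holomorphic functions, with equivalent Hilbert norms.

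Combining the two steps, $M_i$ is bounded on $\mathcal H^y_\mu$, and Theorem~\ref{thm:4.1} delivers the desired homogeneous tuple $(M_1^*,\ldots,M_n^*)$ in $\hat B_k(\mathcal D)$ corresponding to the given Hermitian hhvb. The main obstacle is the last step: cascading the one-step boundedness of Theorem~\ref{thm:3.11} through the triangular structure of $\Gamma^{y,\lambda}$ to produce a bounded endomorphism of $\mathcal H^0_\mu$. This argument leans essentially on the ball-only Theorem~\ref{thm:3.11} and has no ready analogue for higher-rank bounded symmetric domains, which accounts for the restriction to the ball in the statement.
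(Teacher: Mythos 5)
Your argument is correct, but it follows a different route from the paper's. The paper's proof of this corollary is essentially one line: Proposition \ref{prop:4.2} (applied to the whole indecomposable $E^{y,\lambda}$, not summand by summand) already shows that $\lambda<\lambda_y-\tfrac{n}{p}(r-1)\tfrac{a}{2}$ suffices for the homogeneous-tuple conclusion, the boundedness of the $M_i$ there being obtained by factoring $E^{y,\lambda}=L_{\lambda_0}\otimes E^{y,\lambda'}$, invoking the Arazy--Zhang boundedness on the line bundle, a kernel positivity argument via Proposition \ref{prop:3.2}, and the norm-equivalence statement at the end of Theorem \ref{thm:3.4}; for the ball $r=1$, so this sufficient condition coincides with the exact unitarizability condition $\lambda<\lambda_y$ of Theorem \ref{thm:3.4}, and the corollary drops out with no use of Theorem \ref{thm:3.11} at all. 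You instead prove boundedness of $M_i$ only on the irreducible summands of $\mathcal H^0_\mu$ (which is the $m=0$ case of Proposition \ref{prop:4.2} with $r=1$) and then transfer it to $\mathcal H^y_\mu$ by showing $\Gamma^{y,\lambda}$ is bounded with bounded (unipotent-triangular) inverse, cascading the one-step estimates of Theorem \ref{thm:3.11}; this is sound, and in fact it is exactly the ``$\mathcal H^y$ and $\mathcal H^0$ are the same set with equivalent norms'' argument that the paper defers to the proof of the similarity Theorem \ref{thm:4.2}. What each approach buys: the paper's route is shorter and uses only machinery valid for general $\mathcal D$ (the ball enters solely through $r=1$ killing the gap in Proposition \ref{prop:4.2}), whereas your route leans essentially on the ball-only Theorem \ref{thm:3.11} but yields, as a by-product, the identification $\mathcal H^y_\mu=\mathcal H^0_\mu$ with equivalent norms, i.e.\ the key fact behind Theorem \ref{thm:4.2}. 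One small point of hygiene: Proposition \ref{prop:4.2} is stated as ``corresponds to a homogeneous tuple,'' so when you quote it for boundedness of $M_i$ on each $\mathcal H^{(\alpha,\lambda-j)}_{\mu_{j\alpha}}$ you are really appealing to its proof (or to Theorem \ref{thm:4.1} read backwards); this is harmless but worth saying explicitly.
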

\begin{proof}
 The domain $\mathcal D$ is the  Euclidean unit ball if and only if $r=1$.  On the other hand by Theorem \ref{thm:3.4} we know that $E^{y, \lambda}$ is regularly unitarizable exactly when $\lambda < \lambda_y$. 
\end{proof}
In the case of a  general $\mathcal D$, these arguments leave a gap, an interval of $\lambda$ for which the  question remains open.  
%

We shall say that a homogeneous $n$-tuple in $\hat{B}_k(\mathcal D)$ is \emph{basic} if the corresponding Hermitian hhvb is of the form $E^{\alpha, \lambda}$, i.e., is induced by an irreducible  representation of  $\mathfrak k^\mathbb C + \mathfrak p^-.$ 
 If $\mathcal D$ is the unit ball, then  Corollary \ref{rem:4.3}  gives a complete characterization of these.  This result and the following theorem generalize the main results of  \cite[Theorem 4.2]{KM} to the case of the  Euclidean  unit ball in $\mathbb C^n$, ($n\geq 0$). 
 \begin{thm}\label{thm:4.2}
 If $\mathcal D$ is the  Euclidean unit ball in $\mathbb C^n,$ then every homogeneous $n$-tuple in $\hat{B}_k(\mathcal D)$ is similar to the direct sum of basic homogeneous $n$-tuples. 
 \end{thm}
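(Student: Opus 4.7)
The plan is to reduce to the case of an indecomposable Hhhvb and then use the differential intertwiner $\Gamma^{y,\lambda}$ from Theorem \ref{thm 2.5n} as an explicit similarity between the tuple of multiplication operators on $\mathcal H^y_\mu$ and that on the direct sum $\mathcal H^0_\mu$ of basic regular unitary structures.

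First, let $T=(T_1,\dots,T_n)\in\hat B_k(\mathcal D)$ be a homogeneous tuple. By Theorem \ref{thm:4.1}, after unitary equivalence we may assume $T_i=M_i^*$ on a regular unitary structure $\mathcal H$ of some Hhhvb $E$ associated to a Hermitian representation $(\varrho,V)$. Write $V$ as a direct sum of indecomposable Hermitizable summands $V=\bigoplus_i V^{(i)}$ with corresponding irreducible Hhhvb's $E^{y_i,\lambda_i}$. The resulting $(\mathfrak g,\tilde K)$-module splitting of $\mathscr P=\mathscr P(\mathfrak p^+,V)$ lifts, through the correspondence between $(\mathfrak g,\tilde K)$-submodules of $\mathscr P$ and closed $U^\varrho$-invariant subspaces of $\mathcal H$ recalled before Lemma \ref{lem 3.5}, to an orthogonal decomposition $\mathcal H=\bigoplus_i \mathcal H^{y_i}_{\mu_i}$. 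The multiplications $M_j$ respect this splitting, so it suffices to handle each indecomposable summand separately.

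So fix an indecomposable $E^{y,\lambda}$ with regular unitary structure $\mathcal H^y_\mu$. By Theorem \ref{thm:3.4}, $\mathcal H^y_\mu=\Gamma^{y,\lambda}(\mathcal H^0_\mu)$ as a Hilbert space (norm transferred through the unitary isomorphism $\Gamma^{y,\lambda}$), and $\mathcal H^0_\mu=\bigoplus_{j,\alpha}\mathcal H^{(\alpha,\lambda-j)}_{\mu_{j\alpha}}$ is visibly a direct sum of basic regular unitary structures. The crucial claim is that, on the Euclidean unit ball, $\Gamma:=\Gamma^{y,\lambda}$ acts as a bounded invertible linear map of $\mathcal H^0_\mu$ onto itself, viewed as a subspace of $\Hol(\mathcal D,V)$. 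Boundedness is clear: the matrix entries of $\Gamma$ are constant multiples of iterated compositions $D^{\boldsymbol\alpha}=(P\iota D)\circ\cdots\circ(P\iota D)$, each factor of which is a bounded map between the relevant basic spaces by Theorem \ref{thm:3.11}. Invertibility is automatic because $\Gamma$ is block lower triangular with identity on the diagonal, so $\Gamma=I+N$ with $N^{m+1}=0$, and hence $\Gamma^{-1}=\sum_{r=0}^{m}(-N)^r$ is a bounded finite sum. Therefore $\Gamma(\mathcal H^0_\mu)=\mathcal H^0_\mu$ as subsets of $\Hol(\mathcal D,V)$, and the two Hilbert norms on this common set (those of $\mathcal H^y_\mu$ and $\mathcal H^0_\mu$) are equivalent.

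Once this is in place, the identity map $\iota:\mathcal H^0_\mu\to\mathcal H^y_\mu$ is a bounded bijection with bounded inverse, trivially satisfying $\iota M_j=M_j\iota$. Taking Hilbert-space adjoints yields $\iota^{*}M_j^{*,y}=M_j^{*,0}\iota^{*}$, so $\iota^{*}$ simultaneously implements a similarity from the tuple $(M_1^{*,y},\dots,M_n^{*,y})$ on $\mathcal H^y_\mu$ to $(M_1^{*,0},\dots,M_n^{*,0})$ on $\mathcal H^0_\mu$, the latter being manifestly a direct sum of the basic homogeneous $n$-tuples attached to the summands $\mathcal H^{(\alpha,\lambda-j)}_{\mu_{j\alpha}}$. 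The main obstacle is the bounded invertibility of $\Gamma$ on $\mathcal H^0_\mu$, and this is precisely where the Euclidean-ball hypothesis enters through Theorem \ref{thm:3.11}; for more general $\mathcal D$ the boundedness of the individual $P\iota D$ operators between basic Hilbert spaces is not available, which is the essential reason the conclusion is restricted to the ball.
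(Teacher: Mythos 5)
Your proposal is correct and follows essentially the same route as the paper: realize the tuple as the adjoint multiplication tuple on a regular unitary structure via Theorem \ref{thm:4.1}, reduce to an indecomposable $E^{y,\lambda}$, use Theorem \ref{thm:3.4} together with Theorem \ref{thm:3.11} to see that $\mathcal H^y_\mu$ and $\mathcal H^0_\mu$ coincide as sets with equivalent norms, and let the identity map implement the similarity with the direct sum of basic tuples. Your explicit argument that $\Gamma=I+N$ with $N$ nilpotent and bounded (hence $\Gamma$ boundedly invertible on $\mathcal H^0_\mu$) merely fills in what the paper asserts as immediate from Theorem \ref{thm:3.11} and the definition of $\Gamma$.
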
 
\begin{proof}
We know that the bundle for the homogeneous $n$-tuple in $\hat{B}_k(\mathcal D)$ is an $E$ with regular unitary structure. We may assume that $E$ is irreducible.
By Theorem \ref{thm:3.4} this means that $E= E^y$ with the corresponding $E^0$ a direct sum $\oplus_{j=0}^m \oplus_{\alpha \in A_j} E^{\alpha, \lambda-j}$  and $\mathcal H^y = \Gamma \mathcal H^0,$ with $\mathcal H^0 = \oplus \mathcal H^{(\alpha, \lambda-j)}.$ Each $\mathcal H^{(\alpha, \lambda-j)}$ and hence $\mathcal H^0$ is stable under $M_j\,(1\leq j \leq n)$ by Proposition \ref{thm:4.2}(a).  The essential point is that $\mathcal H^0$ and $\mathcal H^\varrho$ are the same as sets. This follows immediately from Theorem \ref{thm:3.11} and the definition of $\Gamma.$ 

Let $I$ be the identity map regarded as a linear transformation from $\mathcal H^0$ to $\mathcal H^y.$ Let $M^{(0)}_j,$ respectively $M_j^{(y)},$ be be the multiplication operators as before but regarded as operators on $\mathcal H^0$ and $\mathcal H^y$ respectively.  They are clearly intertwined by $I,$ so we have 
$$
M_j^{(y)} = I M_j^{(0)} I^{-1}
$$    
Since $M^{(0)} = (M^{(0)}_1, \ldots , M^{(0)}_n)$ is the direct sum of basic $n$-tuples coming from the Hilbert spaces $\mathcal H_j = \oplus_{\alpha\in A_j} \mathcal H^{(\alpha, \lambda -j)},$ the theorem follows.  \end{proof}

\begin{rem}
 If the analogue of Theorem \ref{thm:3.11} can be proved for more general $\mathcal D,$  then the present theorem will also hold, at least if $\lambda$ is outside the gap mentioned  after Corollary \ref{rem:4.3}.
  \end{rem}

\end{document}